\newtheorem{theorem}{Theorem}[section]
\newtheorem{lemma}[theorem]{Lemma}
\newtheorem{corollary}[theorem]{Corollary}
\newtheorem{proposition}[theorem]{Proposition}
\theoremstyle{definition}
\newtheorem{definition}[theorem]{Definition}
\newtheorem{example}[theorem]{Example}
\newtheorem{conjecture}[theorem]{Conjecture}
\theoremstyle{remark}
\newtheorem{remark}[theorem]{Remark}
\newtheorem{question}[theorem]{Question}
\numberwithin{equation}{section}
\DeclareMathOperator{\add}{add}
\DeclareMathOperator{\Aut}{Aut}
\DeclareMathOperator{\coker}{coker}
\DeclareMathOperator{\E}{E}
\DeclareMathOperator{\Ext}{Ext}
\DeclareMathOperator{\ext}{ext}
\DeclareMathOperator{\GL}{GL}
\DeclareMathOperator{\Hom}{Hom}
\DeclareMathOperator{\img}{im}
\DeclareMathOperator{\ind}{ind}
\DeclareMathOperator{\PHom}{PHom}
\DeclareMathOperator{\IHom}{IHom}
\DeclareMathOperator{\THom}{THom}
\DeclareMathOperator{\rank}{rank}
\DeclareMathOperator{\red}{red}
\DeclareMathOperator{\rep}{rep}
\DeclareMathOperator{\Tr}{Tr}
\DeclareMathOperator{\PC}{PC}
\newcommand{\Ec}{{\check{\E}}}
\newcommand{\ec}{{\check{\e}}}
\newcommand{\fc}{{\check{f}}}
\newcommand{\dc}{{\check{d}}}
\newcommand{\betac}{{\check{\beta}}}
\newcommand{\dtc}{{\check{\delta}}}
\newcommand{\epc}{{\check{\ep}}}
\newcommand{\lc}{{\check{l}}}
\newcommand{\rc}{{\check{r}}}
\newcommand{\e}{{\rm e}}
\newcommand{\g}{{\sf g}}
\renewcommand{\S}{{\mc{S}}}
\newcommand{\op}[1]{\operatorname{#1}}
\newcommand{\mb}[1]{\mathbb{#1}}
\newcommand{\mc}[1]{\mathcal{#1}}
\renewcommand{\b}[1]{\bold{#1}}
\newcommand{\ep}{{\epsilon}}
\newcommand{\proj}{\operatorname{proj}\text{-}}
\newcommand{\br}[1]{\overline{#1}}
\newcommand{\innerprod}[1]{\langle#1\rangle}
\renewcommand{\b}[1]{\bold{#1}}
\newcommand{\dv}{\underline{\dim}}
\newcommand{\wtd}[1]{\widetilde{#1}}
\newcommand{\ckQ}{\widehat{kQ}}
\newcommand{\CQ}{\mc{C}_{Q,\mc{S}}}
\newcommand{\T}{\b{T}}
\newcommand{\C}{\mc{C}}
\newcommand{\mub}{\mu_{\b{u}}}
\newcommand{\tauh}{\hat{\tau}}
\renewcommand{\mod}{\operatorname{mod}}
\begin{document}
	
\title{On the General Ranks of QP Representations}
\author{JiaRui Fei}
\address{School of Mathematical Sciences, Shanghai Jiao Tong University, 800 DongChuan Road, Shanghai, China}
\email{jiarui@sjtu.edu.cn}
\thanks{The author was supported in part by National Natural Science Foundation of China (No. 12131015 and No. 11971305)}

\subjclass[2020]{Primary 16G10; Secondary 13F60}

\date{}
\dedicatory{}
\keywords{General Rank, Representation, Quiver with Potential, Mutation, Jacobian Algebra}

\begin{abstract} We propose a mutation formula for the general rank from a principal component $\PC(\delta)$ of representations to another one $\PC(\ep)$ for a quiver with potential.
We give sufficient conditions for the formula to hold. In particular, the formula holds when any of $\delta$ and $\ep$ is reachable.
We discover several related mutation invariants.
\end{abstract}
\maketitle

\section{Introduction}
Schofield introduced the general rank for quiver representations in his theory of general representations \cite{S}. It was further pursued by Crawley-Boevey in \cite{CB}.
Let $Q$ be a finite quiver without oriented cycles, and $\alpha,\beta$ be some dimension vectors of $Q$.
There is an open subset $U$ of $\rep_\alpha(Q)\times \rep_\beta(Q)$ and a dimension vector $\gamma$ such that for all $(M,N)\in U$, $\Hom_Q(M,N)$ has the minimal dimension and
$\{\phi\in \Hom_Q(M,N) \mid \rank\phi=\gamma\}$ is open and non-empty in $\Hom_Q(M,N)$.
The dimension vector $\gamma$ is called the {\em general rank} from $\rep_\alpha(Q)$ to $\rep_\beta(Q)$.

A remarkable property of $\gamma$ is the following 
\begin{equation}\label{eq:ext} \ext_Q(\alpha,\beta)=-\innerprod{\alpha-\gamma,\beta-\gamma}_Q = \ext_Q(\alpha-\gamma,\beta-\gamma).
\end{equation}
where $\ext_Q(\alpha,\beta)$ is the generic (minimal) value of $\dim\Ext_Q^1(M,N)$ on $\rep_\alpha(Q)\times \rep_\beta(Q)$, and $\innerprod{-,-}_Q$ is the Euler form of $Q$.
Schofield further used this property to deduce an algorithm to compute $\ext_Q(\alpha,\beta)$.
This algorithm is one of the key ingredients in Derksen-Weyman's proof of the {\em saturation conjecture} \cite{DW1}.
The algorithm was reformulated in terms of {\em tropical $F$-polynomials} and generalized to finite-dimensional algebras in \cite{Ft}.

Fock and Goncharov formulated {\em their duality pairing conjecture} for cluster varieties in \cite{FG}.
We gave their duality pairing a representation-theoretic interpretation in \cite{Ft}.
We showed that the duality pairing conjecture holds for generic bases if $f_{\epc}(\delta) = \fc_{\delta}(\epc)$ holds for any pair $(\delta,\epc)$ of weight vectors in $\mb{Z}^{Q_0}$,
where $f_{\epc}$ and $\fc_{\delta}$ are the generic tropical $F$-polynomials (see Appendix for the definitions) of a relevant quiver with potential $(Q,\S)$.
As pointed out in \cite{Ft}, more optimistically one may expect the following {\em generic pairing} 
$$f_{\epc}(\delta) =\hom(\delta,\epc) = \fc_{\delta}(\epc)\ \text{ for any pair $(\delta,\epc)$}. $$
This turns out to be equivalent to the saturation conjecture for nondegenerate quivers with potentials (see Definition \ref{D:saturation} and Proposition \ref{P:saturation}).

One would hope that by studying the general ranks of representations of quivers with potentials, the duality pairing conjecture and the saturation conjecture for nondegenerate QPs can be settled. This is our original motivation for this study.
Let us first explain what we mean by the general ranks in the setting of quivers with potentials.
The definition of the general rank has a straightforward generalization (see Lemma \ref{L:genrank}) if we replace the representation spaces of a quiver $Q$ by some irreducible components in the representation variety of any basic algebra, in particular the Jacobian algebra $J$ of $(Q,\S)$ \cite{DWZ1}.
For Jacobi-finite quivers with potentials, the irreducible components of interests for us are those hit by
general presentations \cite{DF}.
More precisely, for any $\delta\in\mb{Z}^{Q_0}$ we define the presentation space
$$\PHom_J(\delta):=\Hom_J(P([-\delta]_+),P([\delta]_+)).$$
Here $P(\beta) = \bigoplus_{u\in Q_0} \beta(u) P_u$ and $P_u$ is the indecomposable projective representation corresponding to $u$.
The vector $\delta$ is called the weight vector or the $\delta$-vector of the presentation space.
There is an open subset $U$ of $\PHom_J(\delta)$ such that the cokernels of presentations in $U$ lie in an irreducible component $\PC(\delta)$ of the representation variety of $J$ (see Definition \ref{D:PC}). 
As shown in \cite{P}, such a component is exactly a {\em strongly reduced component} introduced in \cite{GLS}.
The general rank we consider in this article is the one from the component $\PC(\delta)$ to another one $\PC(\ep)$, denoted by $\rank(\delta,\ep)$.

It is convenient to introduce the {\em decorated} representations \cite{DWZ1}, which are in bijection with the projective presentations up to homotopy equivalence.
A {decorated representation} $\mc{M}=(M,M^-)$ is called general of weight $\delta$ if it corresponds to a general presentation of weight $\delta$. In this case, $M$ is a general representation in $\PC(\delta)$.
By abuse of language, $M$ is also called a general representation of weight $\delta$.

However, we find it difficult to prove a variation of \eqref{eq:ext} in the setting of quivers with potentials (see Question \ref{q:var}).
Most geometric arguments in Schofield's proof break down here, which seems to be the largest obstacle in generalizing Schofield's theory. 
On the other hand, we get some new weapon from the theory of cluster algebras and quivers with potentials.
Thanks to Derksen-Weyman-Zelevinsky's mutation, we are able to find a way for computing the general ranks. Our method is expected to work for any pair $(\delta,\ep)$ but we can only show it works under some conditions. Those conditions are trivially satisfied if one of $\delta$ and $\ep$ is {\em extended-reachable}, that is, can be obtained by a sequence of mutations and $\tau$.
The sufficient conditions are closely related to a conjecture, which implies the duality pairing conjecture and the saturation conjecture for nondegenerate QPs.

For any weight vector $\ep$ of $(Q,\S)$, we define a pair of operators, which plays an important role not only in our method but also in the crystal structure of upper cluster algebras \cite{Fc}.
\begin{definition} For any weight vector $\ep$ of $(Q,\S)$, we define the two operators $r_{\ep}$ and $l_{\ep}$ on the set of weight vectors of $(Q,\S)$ as follows:
\begin{align*} 	r_{\ep} (\delta) &= \delta+\ep +\rank(\ep, \tau\delta) B; \\
	l_{\ep} (\delta) &= \delta-\epc +\rank(\delta,\ep) B,
\end{align*}
where $\tau$ (defined before Theorem \ref{T:genpi}) is related to the AR-translation and $B$ is the skew-symmetric matrix attached to $Q$.
\end{definition}

By the mutation $\mu_u$ of $\delta$ for $u\in Q_0$, we mean the formula \eqref{eq:gmu}.
It is an important problem to determine when the operators $r_\ep$ and $l_\ep$ commute with mutations, that is, 
\begin{equation}\label{intro:commeq} \mu_u(r_\ep(\delta)) = r_{\mu_u(\ep)}(\mu_u(\delta)) \ \text{ and }\ \mu_u(l_\ep(\delta)) = l_{\mu_u(\ep)}(\mu_u(\delta)).
\end{equation}
To motivate this problem, we see that if \eqref{intro:commeq} holds for any sequence of mutations, then it is possible to recursively compute $\rank(\ep, \tau\delta)$ or $\rank(\delta,\ep)$ (see the discussion after Example \ref{ex:nonpair} for details). 
In fact, we obtain the following explicit mutation formula for the general rank.
\begin{theorem}[Theorem \ref{T:murank}] \label{T:intro2} Let $\gamma_r = \rank(\ep,\tau\delta)$ and $\gamma_l = \rank(\delta,\ep)$.	We denote $\gamma_r' = \rank(\ep',\tau\delta')$ and $\gamma_l' = \rank(\delta',\ep')$, where $\delta'=\mu_u(\delta)$ and $\ep'=\mu_u(\ep)$. 
	\begin{enumerate}
		\item If $\mu_u (r_\ep(\delta))=r_{\ep'}(\delta')$, then
		\begin{align*} 
			\gamma_r'(v) &= \begin{cases} \gamma_r(v) &  \text{for all $v\neq u$}, \\
				\gamma_r [b_u]_+ - \gamma_r(u) + [\delta(u)]_+ + [\ep(u)]_+ - [r_\ep(\delta)(u)]_+ & \text{for all $v=u$}.
			\end{cases}
		\end{align*}
		\item If $\mu_u (l_\ep(\delta))=l_{\ep'}(\delta')$, then\begin{align*}
			\gamma_l'(v) &= \begin{cases} \gamma_l(v) & \text{for all $v\neq u$}, \\
				\gamma_l [b_u]_+ - \gamma_l(u) + [\delta(u)]_+ + [-\epc(u)]_+ - [l_\ep(\delta)(u)]_+ & \text{for all $v=u$}.
		\end{cases} \end{align*}
	\end{enumerate}
\end{theorem}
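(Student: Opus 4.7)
The plan is to expand both sides of the hypothesis $\mu_u(r_\ep(\delta)) = r_{\ep'}(\delta')$ using the definition $r_\ep(\delta) = \delta + \ep + \gamma_r B$ and compare componentwise, via the mutation rule \eqref{eq:gmu} for weight vectors together with the standard matrix-mutation rule for $B$. Set $\alpha := r_\ep(\delta)$ and $\alpha' := r_{\ep'}(\delta') = \delta' + \ep' + \gamma_r' B'$ where $B' = \mu_u(B)$, so that the hypothesis becomes
\[
\mu_u(\alpha) \;=\; \delta' + \ep' + \gamma_r' B'.
\]

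First, I would treat the components $v \neq u$. The mutation \eqref{eq:gmu} adds to each weight vector a piecewise-linear correction depending only on its $u$-component and on $b_{v,u}$; apply this to $\alpha$ on the left and to $\delta$, $\ep$ on the right, and simultaneously expand $\gamma_r' B'$ using the $B$-mutation formula. The PL correction coming from $\alpha(u) = \delta(u) + \ep(u) + \gamma_r\cdot b_u$ splits into pieces that, once combined with the terms arising from $\mu_u(B) - B$ on the right, exactly match the corrections coming from $\delta(u)$ and $\ep(u)$ separately. This is the bookkeeping identity that underlies the compatibility between weight-vector mutation and $B$-mutation, and once it is discharged the residual equation is linear and collapses to $\gamma_r(v) = \gamma_r'(v)$ for each $v \neq u$.

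Second, for the component $v = u$, mutation negates both $\alpha(u)$ and the $u$-th column $b_u$. The resulting scalar identity $-\alpha(u) = -\delta(u) - \ep(u) - \gamma_r'\cdot b_u$ alone does not constrain $\gamma_r'(u)$, since $b_u(u)=0$. To pin down $\gamma_r'(u)$, I would interpret $\gamma_r$ as the dimension vector of the generic image module $L$ of a homomorphism from a general representation of $\PC(\ep)$ to a general representation of $\PC(\tau\delta)$, and then invoke the DWZ mutation rule of decorated representations applied to $L$ at vertex $u$: the $u$-coordinate of the mutated dimension vector is $\gamma_r\cdot [b_u]_+ - \gamma_r(u)$ modified by a defect tracking the $[\cdot]_+$-parts of $\delta$, $\ep$, and $r_\ep(\delta)$ at $u$, which reproduces the claimed formula. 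Part (2) is proved by the same scheme with $l_\ep(\delta) = \delta - \epc + \gamma_l B$ in place of $r_\ep(\delta)$; the substitution $\ep \mapsto -\epc$ explains the appearance of $[-\epc(u)]_+$ in the formula.

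The main obstacle I expect is in the $v=u$ step, where the argument must leave the purely formal world of weight vectors and $B$-mutation and appeal to the DWZ mutation of a decorated representation whose dimension vector is $\gamma_r$. Matching the three PL defect terms exactly requires a careful analysis of how the generic image module behaves under mutation at $u$ and uses the fact that one of $\delta$, $\ep$ is extended-reachable (or the relevant sufficient condition ensuring $r_\ep$ and $\mu_u$ commute), so that the mutated decorated representation is still a generic image module for the mutated pair. The $v \neq u$ step, by contrast, should reduce to routine piecewise-linear bookkeeping once one sets up the identities correctly.
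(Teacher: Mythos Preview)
Your overall plan of expanding both sides of $\mu_u(r_\ep(\delta)) = r_{\ep'}(\delta')$ and comparing componentwise is the right one and matches the paper. However, you have the roles of the two cases inverted, and your proposed argument for the $u$-coordinate is a genuine gap.

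The equation at coordinate $u$ is actually the trivial one: since $b_{u,u}'=0$, the quantity $\gamma_r'(u)$ simply does not appear in $(\gamma_r'B')(u)$, and the equation there reduces to $-\alpha(u) = -\delta(u)-\ep(u)-\gamma_r b_u$, which holds automatically once $\gamma_r'(w)=\gamma_r(w)$ for $w\neq u$. Conversely, the equations at $v\neq u$ \emph{do} involve $\gamma_r'(u)$, through the term $\gamma_r'(u)\,b_{u,v}'$ in $(\gamma_r'B')(v)$. So they do not ``collapse to $\gamma_r(v)=\gamma_r'(v)$'' independently of $\gamma_r'(u)$; they are precisely the equations that pin $\gamma_r'(u)$ down.

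More seriously, your proposed appeal to the DWZ mutation of a generic image module is both unnecessary and unlikely to work as stated. The image of a generic morphism is an ordinary representation, not a decorated one; its DWZ mutation need not again be the image of a generic morphism for the mutated pair, and its mutated dimension vector is not given by a formula depending only on $\gamma_r$ and $b_u$. The paper avoids representation theory here entirely. It first applies the frozen-vertex trick (enlarging $Q$ so that the extended $B$-matrix has full rank, which does not disturb the general ranks), so that $\gamma_r'$ is \emph{uniquely} determined by the vector equation $\mu_u(r_\ep(\delta)) = \delta' + \ep' + \gamma_r'B'$. It then simply \emph{verifies} that the claimed formula for $\gamma_r'$ satisfies this equation at every coordinate $v$; the check at $v\neq u$ is where one substitutes the asserted value of $\gamma_r'(u)$ and watches all the piecewise-linear terms cancel. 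No input beyond the mutation rule \eqref{eq:gmu} and the matrix-mutation rule for $B$ is required.
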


We say $(\delta,\ep)$ has {\em completely extremal rank} if any of the following occurs: 
$$\rank(\delta,\ep)=0,\quad  \rank(\delta,\ep)=\dv(\delta),\quad \rank(\delta,\ep)=\dv(\ep),$$
where $\dv(\delta)$ denotes the dimension vector of a general representation in $\PC(\delta)$.
It turns out that if there is a sequence of mutations $\mub$ such that $(\mub(\delta),\mub(\ep))$ has completely extremal rank, then $r_\ep$ and $l_\ep$ commute with any sequence of mutations and $\tau^i$ (Proposition \ref{P:rlmu}).

\begin{conjecture}[Conjecture \ref{c:cer}] \label{intro:cer} Let $(Q,\S)$ be a nondegenerate Jacobi-finite QP. For any pair $(\delta,\ep)$ of $\delta$-vectors of $(Q,\S)$, there is a sequence of mutations $\mub$ such that $(\mub(\delta),\mub(\ep))$ has completely extremal rank.
\end{conjecture}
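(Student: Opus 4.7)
The plan is to attack Conjecture~\ref{intro:cer} in two stages: first, identify a class of ``base pairs'' for which completely extremal rank holds automatically, and second, argue that an arbitrary pair $(\delta,\ep)$ can be mutated into such a base pair.

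For the first stage, the most transparent base pairs are those involving a negative simple weight: when $\ep=-e_u$ for some $u\in Q_0$, the general representation in $\PC(\ep)$ is the zero module (only the decorated simple $S_u^-$ survives), so $\rank(\delta,-e_u)=0$ trivially; dually for $\delta=-e_u$. A second family consists of pairs whose underlying decorated representations live in disjoint blocks of the Jacobian algebra, again forcing $\rank=0$. A third, more substantive family should arise from pairs in which $\delta$ and $\ep$ are both $\delta$-vectors of summands of a common basic cluster-tilting object; here $\rank$ is tightly controlled by the Hom-structure inside the relevant endomorphism algebra, and one expects to verify extremality by direct computation.

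For the second stage, the natural strategy is to induct on a complexity measure such as the non-extremality defect
\[
d(\delta,\ep)\;=\;\min\bigl\{\,|\rank(\delta,\ep)|,\;|\dv(\delta)|-|\rank(\delta,\ep)|,\;|\dv(\ep)|-|\rank(\delta,\ep)|\,\bigr\},
\]
where $|\cdot|$ denotes the sum of components. A coordinate-wise argument shows $d(\delta,\ep)=0$ exactly when completely extremal rank holds. The goal is to produce, whenever $d(\delta,\ep)>0$, a vertex $u$ such that $\mu_u$ strictly decreases $d$. To locate such a $\mu_u$, I would lift the pair to the generalized cluster category $\mc{C}_{(Q,\S)}$, in which decorated representations correspond to objects and mutation acts by changing the cluster-tilting subcategory of reference; the heuristic is that one can always ``mutate toward'' a cluster-tilting object containing summands aligned with both $\delta$ and $\ep$, forcing more generic morphisms to become zero, injective, or surjective.

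The chief obstacle is a potential circularity. Theorem~\ref{T:intro2} provides the mutation formula for $\rank$, but only under the commutation hypothesis \eqref{intro:commeq}, which is itself a consequence of Conjecture~\ref{intro:cer} via Proposition~\ref{P:rlmu}. One therefore cannot bootstrap the conjecture from that formula by naive induction. An independent handle on how $\rank(\delta,\ep)$ changes under $\mu_u$ is required, most plausibly obtained by combining the Derksen--Weyman--Zelevinsky mutation of decorated representations with a careful analysis of the presentation spaces $\PHom_J(\delta)$ and $\PHom_J(\ep)$ and the induced generic morphism spaces — precisely the step where Schofield's classical geometric argument breaks down in the QP setting, as noted in the introduction. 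A secondary difficulty is termination: Jacobi-finiteness bounds the supply of basic cluster-tilting objects up to shift, and this boundedness must be leveraged to prevent the inductive procedure from descending indefinitely through $d$.
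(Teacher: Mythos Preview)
The statement you are attempting to prove is a \emph{conjecture} in the paper, not a theorem: the paper offers no proof and explicitly says that the existence of such a mutation sequence is unclear, conjecturing it only on the basis of worked examples (see the discussion surrounding Conjecture~\ref{c:cer}). So there is no ``paper's own proof'' to compare against; your proposal is an attempt to settle an open problem.

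As a strategy, your outline correctly isolates the central difficulty --- the circularity between the mutation formula for $\rank$ and the commutation hypothesis --- but it does not resolve it. Two further points deserve attention. First, your termination mechanism rests on the claim that Jacobi-finiteness bounds the supply of basic cluster-tilting objects up to shift; this is false. Already for an acyclic wild quiver (where the Jacobian algebra is just the finite-dimensional path algebra) there are infinitely many basic cluster-tilting objects, so boundedness cannot be extracted from Jacobi-finiteness alone. Second, the heuristic of ``mutating toward a cluster-tilting object containing summands aligned with both $\delta$ and $\ep$'' presupposes that $\delta$ and $\ep$ are reachable (i.e., rigid). The conjecture is already known in that case --- it is precisely the non-reachable, non-rigid pairs that are at issue, and for those no such cluster-tilting object need exist. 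Without an independent control on how $\rank(\delta,\ep)$ transforms under $\mu_u$ that does not assume the conjecture, and without a termination argument that survives infinitely many cluster-tilting objects, the inductive scheme remains a plan rather than a proof.
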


Another main result is the following incarnation of the raising and lowering operators in exact sequences. 
The proof relies on a stronger implication of $(\delta,\ep)$ having extremal ranks,
namely, $r_\ep$ (or $l_\ep$) can be {\em generically lifted at $\delta$} (Lemmas \ref{L:rlmu}, \ref{L:rltri} and \ref{L:rltri1}).

\begin{theorem}[Theorem \ref{T:rl}] \label{T:intro1} Let $\ep$ and $\delta$ be two weight vectors for a quiver with potential $(Q,\S)$.
\begin{enumerate}
	\item Suppose that there is an extended mutation sequence $\mub$ such that $(\mub(\ep), \mub(\tau\delta))$ has completely extremal rank. Then there is an exact sequence
	\begin{equation*} \cdots \to  \tauh^{-1} \mc{M}\xrightarrow{} \tauh^{-1} \mc{R} \xrightarrow{} \tauh^{-1} \mc{E}\xrightarrow{h_{-1}} M\xrightarrow{} R \xrightarrow{} E \xrightarrow{h_0} \tauh \mc{M} \xrightarrow{} \tauh \mc{R} \xrightarrow{} \tauh \mc{E} \xrightarrow{h_1} \tauh^2 \mc{M}\to \cdots,\end{equation*}	
	where $\mc{R}$ is general of weight $r_\ep(\delta)$, $(\mc{M},\mc{E})$ is general as a pair of weights $\delta$ and $\ep$, and $h_i$ is a general homomorphism in $\Hom_J(\tauh^{i}\mc{E}, \tauh^{i+1} \mc{M})$.
	\item Suppose that there is an extended mutation sequence $\mub$ such that $(\mub(\delta), \mub(\ep))$ has completely extremal rank. Then there is an exact sequence
	\begin{equation*} \cdots \to  \tauh^{-1} \mc{L}\xrightarrow{} \tauh^{-1} \mc{M} \xrightarrow{g_{-1}} \tauh^{-1} \mc{E}\xrightarrow{} L\xrightarrow{} M \xrightarrow{g_0} E \xrightarrow{} \tauh \mc{L} \xrightarrow{} \tauh \mc{M} \xrightarrow{g_1} \tauh \mc{E} \xrightarrow{} \tauh^2 \mc{L}\to \cdots,\end{equation*}	
	where $\mc{L}$ is general of weight $l_\ep(\delta)$, $(\mc{M},\mc{E})$ is general as a pair of weight $\delta$ and $\ep$, and $g_i$ is a general homomorphism in $\Hom_J(\tauh^{i} \mc{M}, \tauh^{i} \mc{E})$.
\end{enumerate}
\end{theorem}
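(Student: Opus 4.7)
Parts (1) and (2) are formally dual: reversing all arrows and swapping $\delta \leftrightarrow \ep$ sends one to the other, so it suffices to treat part (1). My plan has three steps: (i) convert the hypothesis into a $4$-term exact piece involving $\mc{M},\mc{R},\mc{E}$; (ii) iterate under $\tauh$ to obtain a $4$-term piece for every index $i\in\mb{Z}$; (iii) splice these pieces into the claimed periodic long exact sequence.

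For step (i), the assumption that $(\mub(\ep),\mub(\tau\delta))$ has completely extremal rank forces, via Proposition~\ref{P:rlmu}, that $r_\ep$ commutes with every extended mutation sequence applied at $\delta$, and in particular with every $\tauh^i$. Feeding this into the generic lifting lemmas (Lemmas~\ref{L:rlmu}, \ref{L:rltri}, \ref{L:rltri1}) upgrades the commutation to a statement about decorated representations: there is a $4$-term complex
\begin{equation*}
\mc{M} \longrightarrow \mc{R} \longrightarrow \mc{E} \xrightarrow{\;h_0\;} \tauh\mc{M},
\end{equation*}
exact at $\mc{R}$ and at $\mc{E}$, with $(\mc{M},\mc{E})$ a general pair of weights $(\delta,\ep)$, $\mc{R}$ general of weight $r_\ep(\delta)$, and $h_0$ a general element of $\Hom_J(\mc{E},\tauh\mc{M})$.

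For step (ii), because $r_\ep$ commutes with $\tauh$ and the extremal rank hypothesis transports to $(\tauh^i\ep, \tau\tauh^i\delta)$, the same construction yields, for every $i\in\mb{Z}$, a $4$-term piece
\begin{equation*}
\tauh^i\mc{M} \longrightarrow \tauh^i\mc{R} \longrightarrow \tauh^i\mc{E} \xrightarrow{\;h_i\;} \tauh^{i+1}\mc{M}.
\end{equation*}
The target of $h_i$ coincides with the source of the $(i+1)$-st piece, and step (iii) amounts to verifying that the overlap glues exactly: exactness of the spliced sequence at $\tauh^{i+1}\mc{M}$ reduces to $\img(h_i) = \ker(\tauh^{i+1}\mc{M}\to\tauh^{i+1}\mc{R})$, which follows by chasing through the two consecutive $4$-term pieces joined at that term.

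The hardest point, and the one on which I would focus most care, is the coherence of the splicing: the arrow $\tauh^{i+1}\mc{M}\to\tauh^{i+1}\mc{R}$ delivered by the $i$-th lift must agree with the first arrow of the $(i+1)$-st lift, and the maps $h_i$ must all be simultaneously general in their respective $\Hom$-spaces. I would resolve the first issue by working on the level of projective presentations: fix a generic presentation of $\mc{R}$ once and transport it by $\tauh$, invoking the identity $\tauh\circ r_\ep = r_{\tauh\ep}\circ\tauh$ from step (i) so that each $\tauh^i$-transport remains a generic representative of the correct weight. For simultaneous generality, once $h_0$ is chosen in an open dense locus of $\Hom_J(\mc{E},\tauh\mc{M})$, its $\tauh^i$-translate lies in an open dense locus of $\Hom_J(\tauh^i\mc{E},\tauh^{i+1}\mc{M})$ by $\tauh$-equivariance of the generic loci, and the splicing imposes only the constraints encoded by exactness of each piece, cut out by a proper closed condition that does not obstruct the generic choice.
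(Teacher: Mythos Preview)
Your splicing strategy has a genuine gap. Two independently constructed $4$-term pieces, each exact only at its two inner terms, carry no information forcing $\img(h_i)=\ker(\tauh^{i+1}\mc{M}\to\tauh^{i+1}\mc{R})$ at the join; ``chasing through the two consecutive pieces'' cannot produce this equality without further structure. The fix you sketch---fix one $\mc{R}$ and transport it by $\tauh$---addresses only that the objects match, not that the connecting maps compose to zero with the correct image. There is also a gap already in step~(i): Lemmas~\ref{L:rltri} and~\ref{L:rltri1} require the rank of $(\ep,\tau\delta)$ itself to be extremal, whereas your hypothesis only gives this after applying $\mub$, so you cannot invoke them directly for the original QP.

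The paper sidesteps both issues by working in the cluster category $\mc{C}_{Q,\S}$ rather than in $\rep J$. In the mutated category $\mc{C}_{Q',\S'}$, where the extremal-rank hypothesis holds directly, Lemmas~\ref{L:rltri} and~\ref{L:rltri1} produce a single triangle $\b{M}'\to\b{R}'\to\b{E}'\to\Sigma\b{M}'$ generically lifting $r_{\ep'}$ at $\delta'$. This triangle is transported back through the Keller--Yang triangle equivalence $\br{\mu}_{\b{u}}^-$ (Theorem~\ref{T:twomu}), with genericity preserved by Lemma~\ref{L:gendes}, to a triangle $\b{M}\to\b{R}\to\b{E}\to\Sigma\b{M}$ in $\mc{C}_{Q,\S}$. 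Applying the cohomological functor $F=\mc{C}(\T,-)$ to this one triangle yields the entire long exact sequence at once (Corollary~\ref{C:long}), because $F(\Sigma^i\b{X})\cong\tauh^i\mc{X}$. The triangulated structure is precisely the coherence datum your splicing argument is missing; exactness at every term, and simultaneous genericity of all $h_i$ (as images of the single general $\b{h}\in\mc{C}(\b{E},\Sigma\b{M})$ under $\Sigma^i$ and $F$), come for free.
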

\noindent Here, $\tauh^i \mc{M}$ denotes the representation obtained from $\tau^i \mc{M}$ by forgetting the decorated part.
$(\mc{E},\mc{M})$ is general as a pair of weight $\delta$ and $\ep$ means that $(E,M)$ can be chosen in an open subset of $\PC(\delta)\times\PC(\ep)$.


With the explicit formulas in Theorem \ref{T:intro2}, we find some interesting mutation-invariants assuming $r_\ep$ and $l_\ep$ can be generically lifted at $\delta$ (see Lemma \ref{L:rlmu}).
\begin{proposition}[Proposition \ref{P:muinv}] Suppose that $l_\ep$ and $r_\ep$ commute with the mutation $\mu_u$. Let $\gamma_r = \rank(\ep,\tau\delta)$ and $\gamma_l = \rank(\delta,\ep)$.  Then the following are invariant under $\mu_u$:
	\begin{align*} 
		h_l(\delta,\ep):=&\epc({\gamma}_l) - \hom(\delta,\ep) + \hom(l_\ep(\delta),\ep);  \\   
		e_r(\delta,\ep):=&\ep(\gamma_r)	-\e(\delta, \ep) +\e(r_{\ep}(\delta),\ep).  
	\end{align*}
Here $\hom(\delta,\ep)$ is the generic value of $\dim\Hom_J(M,N)$ for $(M,N)\in \PC(\delta)\times \PC(\ep)$, and $\e(\delta,\ep)$ is a certain generic value on $\PHom_J(\delta)\times \PHom_J(\ep)$ introduced in \cite{DF} (see also below Lemma \ref{L:homotopy}).
\end{proposition}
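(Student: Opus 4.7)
The plan is to combine the explicit mutation formula for the general rank in Theorem~\ref{T:intro2} with the mutation behavior of $\hom$ and $\e$ (the latter coming from the tropical $F$-polynomial formalism recalled in the Appendix and developed in \cite{Ft,DF}). The hypothesis that $l_\ep$ and $r_\ep$ commute with $\mu_u$ enters precisely at the point where one identifies $(\mu_u(l_\ep(\delta)),\mu_u(\ep))$ with $(l_{\ep'}(\delta'),\ep')$, so that $\hom(l_{\ep'}(\delta'),\ep') = \hom(\mu_u(l_\ep(\delta)),\mu_u(\ep))$ and similarly for $\e$. Without this identification one cannot even compare the three terms of $h_l$ (respectively $e_r$) before and after mutation.

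For $h_l$, the first step is Theorem~\ref{T:intro2}(2): $\gamma_l'(v)=\gamma_l(v)$ for $v\neq u$, and $\gamma_l'(u)$ is given by an explicit local formula involving $[\delta(u)]_+$, $[-\epc(u)]_+$, $[l_\ep(\delta)(u)]_+$, and $\gamma_l[b_u]_+$. Combined with the mutation rule $\ep\mapsto\ep'$ (and hence $\epc\mapsto\epc'$), this lets me express the difference $\epc'(\gamma_l')-\epc(\gamma_l)$ as a sum of piecewise-linear terms concentrated at the vertex $u$. The second step is to expand $\hom(\delta',\ep')-\hom(l_{\ep'}(\delta'),\ep')$ via the standard mutation formula for the generic $\hom$ on $\PC(\delta)\times\PC(\ep)$; each $\hom$ contributes a correction at $u$, and using $l_\ep(\delta)=\delta-\epc+\gamma_l B$ these two corrections collapse to the same piecewise-linear expression in $\delta(u)$, $\epc(u)$, $l_\ep(\delta)(u)$, and $\gamma_l[b_u]_+$, but with opposite sign. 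Adding everything up shows $h_l(\delta',\ep')=h_l(\delta,\ep)$.

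The argument for $e_r$ is parallel, using Theorem~\ref{T:intro2}(1) in place of (2), the relation between $\mu_u(\tau\delta)$ and $\tau\delta'$, and the mutation formula for the generic $\e$ from \cite{DF}. The main obstacle is purely bookkeeping: the piecewise-linear terms $[\,\cdot\,]_+$ split into different subcases depending on the signs of $\delta(u)$ and $\ep(u)$, and one must check case by case that the local contribution from $\epc'(\gamma_l')-\epc(\gamma_l)$ matches exactly the local contribution arising from the two $\hom$ terms. I expect that pairing with $\epc$ rather than $\ep$ in the definition of $h_l$ is precisely what makes the signs align — this is the point where the asymmetry in the definitions of $h_l$ and $e_r$ is forced by the asymmetry in the definitions of $l_\ep$ and $r_\ep$.
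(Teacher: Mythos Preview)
Your approach is essentially the same as the paper's: compute the change in the rank term via Theorem~\ref{T:murank}, compute the change in each $\hom$ (or $\e$) term via a mutation formula, and verify that the piecewise-linear corrections cancel. Two small points of divergence: first, the mutation formula you need for $\hom$ and $\e$ is not the tropical $F$-polynomial material from the Appendix or \cite{Ft}, but Lemma~\ref{L:HEmu} (from \cite{DWZ2}), specialized to general representations so that $\beta_{\pm,\mc{M}}(u)=[\pm\delta(u)]_+$ and $\betac_{\pm,\mc{N}}(u)=[\pm\epc(u)]_+$; second, the paper actually carries out the explicit calculation for the companion invariant $\check{h}_l(\delta,\ep)=\delta(\gamma_l)-\hom(\delta,\ep)+\hom(\delta,\lc^\delta(\ep))$ and declares the others analogous, whereas you sketch $h_l$ directly. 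The paper's computation does not split into sign cases as you anticipate; it proceeds by a single chain of identities using $[x]_+-[-x]_+=x$ to collapse the expression into the form matching Lemma~\ref{L:HEmu}.
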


Note that the assumptions in Theorem \ref{T:intro1} are always satisfied if either $\ep$ or $\delta$ is extended-reachable.
In this case, Theorem \ref{T:intro1} and related results take a particularly nice form (Corollaries \ref{C:rlrigid} and \ref{C:rle}). Moreover, the operators $r_\ep$ and $l_\ep$ are inverse of each other (Proposition \ref{P:rlid}).
We also find an interesting property of these operators related to the canonical decomposition of presentations \cite{DF}.
We write $\ep = \ep_1\oplus \ep_2$ if a general presentation of weight $\ep$ is a direct sum of a presentation of weight $\ep_1$ and a presentation of weight $\ep_2$.

\begin{proposition}[Proposition \ref{P:CDP}] For any two weight vectors $\ep_1$ and $\ep_2$ of a QP, the following are equivalent: \begin{enumerate}
		\item $\ep = \ep_1 \oplus \ep_2$;
		\item $r_\ep = r_{\ep_1} r_{\ep_2} = r_{\ep_2} r_{\ep_1}$;
		\item $l_\ep = l_{\ep_1} l_{\ep_2} = l_{\ep_2} l_{\ep_1}$.
	\end{enumerate}
\end{proposition}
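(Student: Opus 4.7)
The plan is to establish $(1)\Leftrightarrow(3)$ via a ``chain rule'' for the generic rank under the direct-sum decomposition of the target, and then to deduce $(1)\Leftrightarrow(2)$ by the dual argument using Theorem~\ref{T:rl}(1) in place of Theorem~\ref{T:rl}(2).

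I begin with $(1)\Rightarrow(3)$. Assume $\ep=\ep_1\oplus\ep_2$, so a general $E\in\PC(\ep)$ decomposes as $E=E_1\oplus E_2$ with each $E_i$ general of weight $\ep_i$. For a general $M\in\PC(\delta)$, a general $\phi\in\Hom_J(M,E)$ writes as a pair $\phi=(\phi_1,\phi_2)$ with $\phi_i:M\to E_i$, and projecting $\img\phi\subseteq E_1\oplus E_2$ onto $E_2$ yields the short exact sequence
$$0\to\phi_1(\ker\phi_2)\to\img\phi\to\img\phi_2\to 0,$$
so $\dv(\img\phi)=\dv(\phi_1(\ker\phi_2))+\dv(\img\phi_2)$. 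Generically $\dv(\img\phi_2)=\rank(\delta,\ep_2)$, and by Theorem~\ref{T:rl}(2) applied to $(\delta,\ep_2)$ the weight $l_{\ep_2}(\delta)$ is realised so that $\rank(l_{\ep_2}(\delta),\ep_1)$ computes $\dv(\phi_1(\ker\phi_2))$, giving the chain rule
$$\rank(\delta,\ep)=\rank(\delta,\ep_2)+\rank(l_{\ep_2}(\delta),\ep_1).$$
Plugging this into the defining formula for $l_\ep$ and using the linearity $\check\ep=\check\ep_1+\check\ep_2$ yields $l_\ep(\delta)=l_{\ep_1}(l_{\ep_2}(\delta))$; the symmetric role of $\ep_1,\ep_2$ in $\ep$ gives the commutation $l_{\ep_1}l_{\ep_2}=l_{\ep_2}l_{\ep_1}$. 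The implication $(1)\Rightarrow(2)$ proceeds analogously, using Theorem~\ref{T:rl}(1) to realise $r_{\ep_2}(\delta)$ via the middle term of the corresponding splice and performing the $\tau$-shifted chain rule.

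For the converses $(2)\Rightarrow(1)$ and $(3)\Rightarrow(1)$, I would evaluate at $\delta=0$. Since $\rank(0,\ep')=\rank(\ep',0)=0$ for every $\ep'$, one has $l_\ep(0)=-\check\ep$, $l_{\ep_i}(0)=-\check\ep_i$, $r_\ep(0)=\ep$, and $r_{\ep_i}(0)=\ep_i$. Expanding the iterated compositions and equating coefficients of $B$ forces, respectively,
$$\rank(-\check\ep_2,\ep_1)=\rank(-\check\ep_1,\ep_2)=0\qquad\text{and}\qquad\rank(\ep_1,\tau\ep_2)=\rank(\ep_2,\tau\ep_1)=0.$$
Via the definition of the $\e$-invariant for Jacobian algebras recalled below Lemma~\ref{L:homotopy}, both families of vanishings are equivalent to $\e(\ep_1,\ep_2)=\e(\ep_2,\ep_1)=0$, which is Derksen--Fei's criterion \cite{DF} for the canonical decomposition $\ep=\ep_1\oplus\ep_2$.

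The main obstacle is the identification in the chain rule, namely that $\rank(l_{\ep_2}(\delta),\ep_1)$ correctly computes $\dv(\phi_1(\ker\phi_2))$. One must verify that, as $(\phi_1,\phi_2)$ varies in an open subset of $\Hom_J(M,E_1)\times\Hom_J(M,E_2)$, the restriction $\phi_1|_{\ker\phi_2}$ sweeps out a generic locus in $\Hom_J(\ker\phi_2,E_1)$, and that the kernel itself (together with its natural decoration, as dictated by Theorem~\ref{T:rl}(2)) lies generically in $\PC(l_{\ep_2}(\delta))$. The first point reduces to the generic surjectivity of the restriction map $\Hom_J(M,E_1)\to\Hom_J(\ker\phi_2,E_1)$, hence to the generic vanishing of $\Ext^1_J(\img\phi_2,E_1)$, which will be ensured by the hypotheses underpinning Theorem~\ref{T:rl}(2).
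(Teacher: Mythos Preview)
Your converse directions $(2)\Rightarrow(1)$ and $(3)\Rightarrow(1)$ are essentially the paper's argument: evaluate at $\delta=0$ and use the Derksen--Fei criterion. That part is fine.

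The forward direction $(1)\Rightarrow(3)$ has a genuine gap. Your chain rule relies on Theorem~\ref{T:rl}(2), but that theorem carries the hypothesis that some extended mutation sequence sends $(\delta,\ep_2)$ to a pair with completely extremal rank---precisely the content of Conjecture~\ref{c:cer}, which is open. Proposition~\ref{P:CDP} is stated unconditionally, so you cannot invoke Theorem~\ref{T:rl} here. You acknowledge the obstacle in your final paragraph and claim the needed generic vanishing of $\Ext^1_J(\img\phi_2,E_1)$ ``will be ensured by the hypotheses underpinning Theorem~\ref{T:rl}(2)'', but those hypotheses concern only the pair $(\delta,\ep_2)$ and say nothing about $E_1$; there is no mechanism in the paper producing that vanishing. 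A second difficulty: even granting Theorem~\ref{T:rl}, it asserts that a general $\mc{L}$ of weight $l_{\ep_2}(\delta)$ fits into the sequence with $(\mc{M},\mc{E}_2)$ a general pair, but it does \emph{not} assert that $(\mc{L},\mc{E}_1)$ is a general pair, which is what you need to identify the rank of $\phi_1|_{\ker\phi_2}$ with $\rank(l_{\ep_2}(\delta),\ep_1)$. Example~\ref{ex:nonpair} and the surrounding remarks show exactly this kind of failure.

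The paper avoids all of this by working in the cluster category $\CQ$. From $\ep=\ep_1\oplus\ep_2$ one gets $\e(\ep_1,\ep_2)=\e(\ep_2,\ep_1)=0$, hence $\C(\b{E}_i,\Sigma\b{E}_j)=0$ for $i\neq j$ by Corollary~\ref{C:e0}(3). Then Lemma~\ref{L:rle=0} (a nine-diagram argument in $\C$) gives $\b{r}_{\b{E}_1\oplus\b{E}_2}=\b{r}_{\b{E}_1}\b{r}_{\b{E}_2}=\b{r}_{\b{E}_2}\b{r}_{\b{E}_1}$ and likewise for $\b{l}$, unconditionally; the translation to $r_\ep,l_\ep$ is Lemma~\ref{L:brl=rl}. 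The point is that in $\C$ the ``chain rule'' becomes the octahedral axiom, and no generic-position issues arise because one works with actual morphisms in a triangulated category rather than with general ranks between principal components.
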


\subsection{Organization}
In Section \ref{S:QP} we briefly review the theory of quivers with potentials.
In Section \ref{S:GP} after a brief review of the theory of general presentations \cite{DF}, 
we prove that general presentations behave well under the Auslander-Reiten translation (Theorem \ref{T:genpi}), which we think is mildly new.
In Section \ref{S:2CY} we review the theory of $2$-Calabi-Yau triangulated categories developed by C. Amiot, B. Keller, Y. Palu, P. Plamondon and so on. Notable is the Palu's formula on index (Lemma \ref{L:index}), which is important to us.
In Section \ref{S:GR} we introduce the pair of operators $r_\ep$ and $l_\ep$, and prove our first main result (Theorem \ref{T:rl}). 
In Section \ref{S:Prop} we study the properties around the general ranks, including our second main result (Theorem \ref{T:murank}), and some mutation invariants (Proposition \ref{P:muinv}).
In Appendix we explain the relationship between the saturation conjecture for nondegenerate QPs and the Fock-Goncharov's duality pairing conjecture for the generic bases, and explain why Conjecture \ref{c:hev} implies them.
\subsection{About Notations}
By a quiver $Q$ we mean a quadruple $Q = (Q_0,Q_1, t, h)$ where $Q_0$ is a finite set of vertices, $Q_1$ is a finite set of arrows, 
and $t$ and $h$ are the tail and head functions $Q_1 \to Q_0$.
Let $k$ denote an algebraically closed field of characteristic zero, and $A\cong kQ/I$ be a basic finite-dimensional $k$-algebra.

All modules are right modules, and all vectors are row vectors.
For the direct sum of $n$ copies of $M$, we write $nM$ instead of the traditional $M^{\oplus n}$.
We write $\hom,\ext$ and $\e$ for $\dim\Hom, \dim\Ext$, and $\dim \E$. The superscript $*$ is the trivial dual for vector spaces.
\begin{align*}
	& \rep A && \text{the category of finite-dimensional representations of $A$} &\\
	& \rep_\alpha(A) && \text{the space of $\alpha$-dimensional representations of $A$} &\\
	& \proj A && \text{the category of finite-dimensional projective representations of $A$} &\\
	& S_u && \text{the simple representation supported on the vertex $u$} &\\
	& P_u, I_u && \text{the projective cover and the injective envelope  of $S_u$} &\\
	& \dv M && \text{the dimension vector of $M$} & \\
	& \tauh^i \mc{M} && \text{the representation obtained by forgetting the decorated part of $\tau^i \mc{M}$} & 
\end{align*}


\section{A Review on Representation Theory of Quivers with Potentials} \label{S:QP}
\subsection{Decorated Representations and Presentations} 
Let $Q$ be a finite quiver with no loops. For such a quiver, we associate a skew-symmetric matrix $B_Q$ given by
$$B_Q(u,v) = |\text{arrows $u\to v$}| - |\text{arrows $v\to u$}|.$$ 
Following \cite{DWZ1}, we define a potential $\mc{S}$ on a quiver $Q$ as a (possibly infinite) linear combination of oriented cycles in $Q$.
More precisely, a {\em potential} is an element of the {\em trace space} $\Tr(\ckQ):=\ckQ/[\ckQ,\ckQ]$,
where $\ckQ$ is the completion of the path algebra $kQ$ and $[\ckQ,\ckQ]$ is the closure of the commutator subspace of $\ckQ$.
The pair $(Q,\mc{S})$ is a {\em quiver with potential}, or QP for short.
For each arrow $a\in Q_1$, the {\em cyclic derivative} $\partial_a$ on $\widehat{kQ}$ is defined to be the linear extension of
$$\partial_a(a_1\cdots a_d)=\sum_{k=1}^{d}a^*(a_k)a_{k+1}\cdots a_da_1\cdots a_{k-1}.$$
For each potential $\mc{S}$, its {\em Jacobian ideal} $\partial \mc{S}$ is the closed (two-sided) ideal in $\ckQ$ generated by all $\partial_a \mc{S}$.
The {\em Jacobian algebra} $J=J(Q,\mc{S})$ is $\widehat{kQ}/\partial \mc{S}$.
A QP is {\em Jacobi-finite} if its Jacobian algebra is finite-dimensional.
All QPs in this article will be assumed to be Jacobi-finite.

\begin{definition} A decorated representation of the Jacobian algebra $J$ is a pair $\mc{M}=(M,M^-)$,
	where $M\in \rep J$, and $M^-$ is a finite-dimensional $k^{Q_0}$-module.
\end{definition}
\noindent 	By abuse of language, we also say that $\mc{M}$ is a representation of $(Q,\mc{S})$.
When appropriate, we will view an ordinary representation $M$ as the decorated representation $(M,0)$.

Following \cite{DF} we call a homomorphism between two projective representations, a {\em projective presentation} (or presentation in short). 
As a full subcategory of the category of complexes in $\rep J$, the category of projective presentations is Krull-Schmidt as well.
Sometimes it is convenient to view a presentation $P_-\to P_+$ as elements in the homotopy category $K^b(\proj J)$ of bounded complexes of projective representations of $J$.
Our convention is that $P_-$ sits in degree $-1$ and $P_+$ sits in degree $0$.

We denote by $P_u$ (resp. $I_u$) the indecomposable projective (resp. injective) representation of $J$ corresponding to the vertex $u$ of $Q$. 
For $\beta \in \mb{Z}_{\geq 0}^{Q_0}$ we write $P(\beta)$ for $\bigoplus_{u\in Q_0} \beta(u)P_u$ and $I(\beta)$ for $\bigoplus_{u\in Q_0} \beta(u)I_u$. 
\begin{definition}\footnote{The $\delta$-vector is the same one defined in \cite{DF}, but is the negative of the $\g$-vector defined in \cite{DWZ2}. }
	The {\em $\delta$-vector} (or {\em weight vector}) of a presentation 
	$$d: P(\beta_-)\to P(\beta_+)$$
	is the difference $\beta_+-\beta_- \in \mb{Z}^{Q_0}$.
	When working with injective presentations 
	$$\dc: I(\betac_+)\to I(\betac_-),$$
	we call the vector $\betac_+ - \betac_-$ the {\em $\check{\delta}$-vector} of $\dc$.
\end{definition}
\noindent The $\delta$-vector is just the corresponding element in the Grothendieck group of $K^b(\proj J)$.

Let $\nu$ be the Nakayama functor $\Hom_J(-,J)^*$.
There is a map still denoted by $\nu$ sending a projective presentation to an injective one
$$P_-\to P_+\ \mapsto\ \nu(P_-) \to \nu(P_+).$$
Note that if there is no direct summand of the form $P_i\to 0$, then $\ker(\nu d) = \tau\coker(d)$ where $\tau$ is the classical Auslander-Reiten translation. 

Let $\mc{R}ep(J)$ be the set of decorated representations of $J$ up to isomorphism, and $K^2(\proj J)$ be the full subcategory of complexes of length 2 in $K^b(\proj J)$. There is a bijection between the additive categories $\mc{R}ep(J)$ and $K^2(\proj J)$ mapping any representation $M$ to its minimal presentation in $\rep J$, and the simple representation $S_u^-$ of $k^{Q_0}$ to $P_u\to 0$.
Now we can naturally extend the classical AR-translation to decorated representations:
$$\xymatrix{\mc{M} \ar[r]\ar@{<->}[d] & \tau \mc{M} \ar@{<->}[d] \\ d_{\mc{M}} \ar[r] & \nu(d_{\mc{M}})}$$
Note that this definition agrees with the one in \cite{DF}. We will find it convenient to introduce the notation $\tauh \mc{M}$ to denote the representation obtained from $\tau \mc{M}$ by forgetting the decorated part.

Suppose that $\mc{M}$ corresponds to a projective presentation
$d_{\mc{M}}: P(\beta_-)\to P(\beta_+)$. 
The Betti vectors $\beta_{-,\mc{M}}$ and $\beta_{+,\mc{M}}$ are by definition $\beta_-$ and $\beta_+$ respectively.
The $\delta$-vector $\delta_{\mc{M}}$ of $\mc{M}$ is by definition the $\delta$-vector of $d_{\mc{M}}$, that is, $\beta_+-\beta_-$.
If working with the injective presentations, we can define the $\dtc$-vector $\dtc_{\mc{M}}$ of $\mc{M}$.
It is known \cite{DWZ2} that $\delta_{\mc{M}}$ and $\dtc_{\mc{M}}$ are related by
\begin{equation}\label{eq:delta2dual} \dtc_{\mc{M}}  =  \delta_{\mc{M}} + (\dv M) B_Q. \end{equation}


\begin{definition}[{\cite{DWZ2,DF}}] \label{D:HomE} Given any projective presentation $d: P_-\to P_+$ and any $N\in \rep(A)$, we define $\Hom(d,N)$ and $\E(d,N)$ to be the kernel and cokernel of the induced map:
	\begin{equation} \label{eq:HE} 0\to \Hom(d,N)\to \Hom_J(P_+,N) \xrightarrow{} \Hom_J(P_-,N) \to \E(d, N)\to 0.
	\end{equation}
	Similarly for an injective presentation $\dc: I_+\to I_-$, we define $\Hom(M,\dc)$ and $\Ec(M,\dc)$ to be the kernel and cokernel of the induced map $\Hom_J(M,I_+) \xrightarrow{} \Hom_J(M,I_-)$.
	It is clear that 
	$$\Hom(d,N) = \Hom_J(\coker(d),N)\ \text{ and }\ \Hom(M,\dc) = \Hom_J(M,\ker(\dc)).$$
We set $\Hom_J(\mc{M},\mc{N})=\Hom(d_{\mc{M}},N)=\Hom(M,\dc_{\mc{N}})$, 
$\E_J(\mc{M},\mc{N}) := \E(d_{\mc{M}},N)$ and $\Ec_J(\mc{M},\mc{N}) := \Ec(M,\dc_{\mc{N}})$.
\end{definition}	
\noindent Note that according to this definition, we have that $\Hom_J(\mc{M},\mc{N}) = \Hom_J(M,N)$.\footnote{This definition is slightly different from the one in \cite{DWZ2}, which involves the decorated part.}
We also set $\E(d_{\mc{M}},d_{\mc{N}}) = \E_J(\mc{M},\mc{N})$ and $\Ec(\dc_{\mc{M}},\dc_{\mc{N}}) = \Ec_J(\mc{M},\mc{N})$.
We refer readers to \cite{DF} for an interpretation of $\E_J(\mc{M},\mc{N})$ in terms of the presentations $d_{\mc{M}}$ and $d_{\mc{N}}$.
We call $\mc{M}$ or $d_{\mc{M}}$ {\em rigid} if $\E_J(\mc{M},\mc{M})=0$.

\begin{lemma}[{\cite[Corollary 10.8 and Proposition 7.3]{DWZ2}, \cite[Corollary 7.6]{DF}}]  \label{L:H2E} We have the following equalities:
	\begin{enumerate}
		\item{} $\E_J(\mc{M},\mc{N})=\Hom_J(\mc{N},\tau\mc{M})^*\text{ and }\Ec_J(\mc{M},\mc{N})=\Hom_J(\tau^{-1}\mc{N},\mc{M})^*.$
		\item{} $\E_J(\mc{M},\mc{M})=\Ec_J(\mc{M},\mc{M})=\E_J(\tau\mc{M},\tau\mc{M})$.
	\end{enumerate}
\end{lemma}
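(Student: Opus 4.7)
The proof plan rests on the Auslander--Reiten (Nakayama) formula
\[ \Hom_J(P,X)^* \cong \Hom_J(X,\nu P), \qquad \Hom_J(X,I)^* \cong \Hom_J(\nu^{-1}I,X), \]
valid for $P$ projective, $I$ injective, and any $X \in \rep J$, together with the compatibility between the Nakayama functor and the AR translation that is built into the definition of $\tau$ on decorated representations.

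For part (1), apply $(-)^*$ to the right portion of the defining exact sequence \eqref{eq:HE} for $\E_J(\mc{M},\mc{N}) = \E(d_{\mc{M}}, N)$ and invoke the first Nakayama isomorphism on each indecomposable projective summand of $P(\beta_{\pm,\mc{M}})$. This yields
\[ 0 \to \E_J(\mc{M},\mc{N})^* \to \Hom_J(N,\nu P(\beta_{-,\mc{M}})) \xrightarrow{(\nu d_{\mc{M}})_*} \Hom_J(N,\nu P(\beta_{+,\mc{M}})). \]
By construction of $\tau$ on decorated representations, $\nu(d_{\mc{M}})$ is precisely the injective presentation $\dc_{\tau\mc{M}}$; Definition \ref{D:HomE} then identifies the kernel on the right with $\Hom(N,\dc_{\tau\mc{M}}) = \Hom_J(\mc{N},\tau\mc{M})$, giving the first identity. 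The second identity follows by the parallel argument starting from the injective presentation $\dc_{\mc{N}}$ in the defining sequence for $\Ec(M,\dc_{\mc{N}})$ and using the second Nakayama formula together with $\nu^{-1}(\dc_{\mc{N}}) = d_{\tau^{-1}\mc{N}}$.

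For part (2), combining both halves of (1) gives
\[ \E_J(\mc{M},\mc{M})^* \cong \Hom_J(\mc{M},\tau\mc{M}), \qquad \Ec_J(\mc{M},\mc{M})^* \cong \Hom_J(\tau^{-1}\mc{M},\mc{M}), \]
and applying (1) once more to $\tau\mc{M}$ yields $\E_J(\tau\mc{M},\tau\mc{M})^* \cong \Hom_J(\tau\mc{M},\tau^2\mc{M})$. It therefore suffices to produce natural isomorphisms
\[ \Hom_J(\tau^{-1}\mc{M},\mc{M}) \cong \Hom_J(\mc{M},\tau\mc{M}) \cong \Hom_J(\tau\mc{M},\tau^2\mc{M}), \]
which are induced by applying $\tau$ and $\tau^{-1}$, using that the extension of the AR translation to decorated representations is a bijection on isomorphism classes with a compatible transport of morphisms at the level of presentations.

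The main obstacle is the bookkeeping of the ``trivial'' summands contributed by the decorated part. A component $M^-$ contributes summands $P_u \to 0$ to $d_{\mc{M}}$, which become $I_u \to 0$ under $\nu$; one must verify that their presence does not interfere with the kernel/cokernel identifications in the Nakayama computation, and that Definition \ref{D:HomE} of $\Hom_J(\mc{M},\mc{N})$ and $\E_J(\mc{M},\mc{N})$ remains compatible in their presence. Once this is settled, part (1) reduces to the direct calculation above, and the transport-of-morphisms step in (2) becomes transparent in the cluster-categorical picture of Section \ref{S:2CY}, where $\tau$ is simply the shift in a $2$-Calabi--Yau triangulated category.
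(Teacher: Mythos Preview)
Your argument for part~(1) is correct and is exactly the standard Nakayama-duality computation; since the paper simply cites this lemma from \cite{DWZ2} and \cite{DF} without giving a proof, there is nothing further to compare.

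The gap is in part~(2). You reduce to showing
\[
\Hom_J(\tau^{-1}\mc{M},\mc{M}) \;\cong\; \Hom_J(\mc{M},\tau\mc{M}) \;\cong\; \Hom_J(\tau\mc{M},\tau^2\mc{M}),
\]
and then claim these are ``induced by applying $\tau$ and $\tau^{-1}$.'' But the extended AR-translation on decorated representations is only a bijection on isomorphism classes, not an equivalence of categories on $\mod J$; it does not in general preserve $\Hom$-spaces. Your fallback to the cluster category does not rescue this: while $\Sigma$ is an autoequivalence of $\C$, the functor $\Hom_J(-,-)$ is the quotient $\C(-,-)/(\Sigma\T)(-,-)$, and $\Sigma$ carries the ideal $(\Sigma\T)$ to $(\Sigma^2\T)$, not to itself. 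Concretely, $\Sigma$ identifies $\C(\Sigma^{-1}\b{M},\b{M})$ with $\C(\b{M},\Sigma\b{M})$, but sends $(\Sigma\T)(\Sigma^{-1}\b{M},\b{M})$ to $(\Sigma^2\T)(\b{M},\Sigma\b{M})$, so the two quotients you need to compare are taken by \emph{different} ideals. Thus the displayed isomorphisms are precisely what remains to be proved, and your argument is circular at this point.

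The actual proofs in the references proceed differently: \cite[Proposition~7.3]{DWZ2} establishes the equality $\E_J(\mc{M},\mc{M})=\Ec_J(\mc{M},\mc{M})$ by a direct computation with the explicit formulas for the $E$-invariant, while \cite[Corollary~7.6]{DF} uses more of the $2$-Calabi--Yau structure (essentially both Lemma~\ref{L:EinC} and Lemma~\ref{L:EC} together, rather than the shift alone). If you want to stay within the cluster-categorical framework, one route is to combine $(\Sigma\T)(\b{M},\Sigma\b{M})\cong\E_J(\mc{M},\mc{M})$ from Lemma~\ref{L:EinC} with the duality of Lemma~\ref{L:EC} and the identification $(\Sigma\T)(\Sigma^{-1}\b{M},\b{M})\cong\Ec_J(\mc{M},\mc{M})$, but this still requires an argument beyond ``apply $\Sigma$.''
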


\subsection{Mutation of QPs}
In \cite{DWZ1} and \cite{DWZ2}, the mutation of quivers with potentials is invented to model the cluster algebras.
Let $u$ be a vertex of $Q$ away from all $2$-cycles. The {\em mutation} $\mu_u$ of a QP $(Q,\mc{S})$ at $u$ is defined as follows.
The first step is to define the following new QP $\wtd{\mu}_u(Q,\mc{S})=(\wtd{Q},\wtd{\mc{S}})$.
We put $\wtd{Q}_0=Q_0$ and $\wtd{Q}_1$ is the union of three different kinds
\begin{enumerate}
	\item[$\bullet$] all arrows of $Q$ not incident to $u$,
	\item[$\bullet$] a composite arrow $[ab]$ from $t(a)$ to $h(b)$ for each $a,b$~with~$h(a)=t(b)=u$,
	\item[$\bullet$] an opposite arrow $a^\star$ (resp. $b^\star$) for each incoming arrow $a$ (resp. outgoing arrow $b$) at $u$.
\end{enumerate}
The new potential on $\wtd{Q}$ is given by
$$\wtd{\mc{S}}:=[\mc{S}]+\sum_{h(a)=t(b)=u}b^\star a^\star[ab],$$
where $[\mc{S}]$ is obtained by substituting $[ab]$ for each word $ab$ occurring in $\mc{S}$. Finally we define $(Q',\mc{S}')=\mu_u(Q,\mc{S})$ as the {\em reduced part} (\cite[Definition 4.13]{DWZ1}) of $(\wtd{Q},\wtd{\mc{S}})$.
For this last step, we refer readers to \cite[Section 4,5]{DWZ1} for details.
A sequence of vertices is called {\em admissible} for $(Q,\S)$ if its mutation along this sequence is defined. 
If all sequences are admissible for $(Q,\S)$ then we call $(Q,\S)$ {\em nondegenerate}.

Now we start to define the mutation of decorated representations of $J:=J(Q,\mc{S})$.
Consider the triangle of linear maps as in \cite{DWZ1} with $\beta_u \gamma_u=0$ and $\gamma_u \alpha_u=0$.
$$\vcenter{\xymatrix@C=5ex{
		& M(u) \ar[dr]^{\beta_u} \\
		\bigoplus_{h(a)=u} M(t(a)) \ar[ur]^{\alpha_u} && \bigoplus_{t(b)=u} M(h(b)) \ar[ll]^{\gamma_u} \\
}}$$

We first define a decorated representation $\wtd{\mc{M}}=(\wtd{M},\wtd{M}^-)$ of $\wtd{\mu}_u(Q,\mc{S})$.
We set \begin{align*}
&\wtd{M}(v)=M(v),\quad  \wtd{M}^-(v)=M^-(v)\quad (v\neq u); \\
&\wtd{M}(u)=\frac{\ker \gamma_u}{\img \beta_u}\oplus \img \gamma_u \oplus \frac{\ker \alpha_u}{\img \gamma_u} \oplus M^-(u),\quad \wtd{M}^-(u)=\frac{\ker \beta_u}{\ker \beta_u\cap \img \alpha_u}.
\end{align*}
We then set $\wtd{M}(a)=M(a)$ for all arrows not incident to $u$, and $\wtd{M}([ab])=M(ab)$.
It is defined in \cite{DWZ1} a choice of linear maps
$\wtd{M}(a^\star)$ and $\wtd{M}(b^\star)$ making
$\wtd{M}$ a representation of $(\wtd{Q},\wtd{\mc{S}})$.
We refer readers to \cite[Section 10]{DWZ2} for details.
Finally, we define $\mc{M}'=\mu_u(\mc{M})$ to be the {\em reduced part} (\cite[Definition 10.4]{DWZ1}) of $\wtd{\mc{M}}$.

We say a decorated representation $\mc{M}$ of $(Q,\mc{S})$ is {\em negative reachable} if there is a sequence of mutations $\mub=(\mu_{u_1},\cdots,\mu_{u_l})$ such that $\mub(\mc{M})$ is {\em negative}, i.e., $\mub(\mc{M})$ has only the decorated part.
Similarly we say $\mc{M}$ is {\em positive reachable} if there is a sequence of mutations $\mub$ such that $\mub(\mc{M})$ is a projective representation.

Let us recall several formula relating the $\delta$-vector of $\mc{M}$ and its mutation $\mu_{u}(\mc{M})$. We will use the notation $[b]_+$ for $\max(b,0)$, and set $b_{u,v}:=B_Q(u,v)$.
\begin{lemma}[{\cite[Lemma 5.2]{DWZ2}}] \label{L:gdmu} Let $\delta=\delta_{\mc{M}}$ and $\delta'=\delta_{\mub(\mc{M})}$.
We use the similar notation for $\dtc=\dtc_{\mc{M}}$ and the dimension vectors $d=\dv(M)$. Then 
\begin{align} \delta'(v) &= \begin{cases} -\delta(u) & \text{if $v=u$}\\ \delta(v) - [-b_{u,v}]_+\beta_-(u) + [b_{u,v}]_+\beta_+(u) & \text{if $v\neq u$.} \end{cases}\\
\dtc'(v) &= \begin{cases} -\dtc(u) & \text{if $v=u$}\\ \dtc(v) - [b_{u,v}]_+\betac_-(u) + [-b_{u,v}]_+\betac_+(u) & \text{if $v\neq u$.} \end{cases}
\end{align} 
\end{lemma}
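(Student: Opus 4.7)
The plan is to compute the Betti numbers of $\mu_u(\mc{M})$ directly from the DWZ mutation rule and read off $\delta' = \beta_+' - \beta_-'$ from them. For the $\dtc$-formula, I would either run the parallel argument on minimal injective presentations (where the Nakayama functor $\nu$ swaps incoming and outgoing arrows at $u$, hence flips the sign of $b_{u,v}$), or combine the first formula with \eqref{eq:delta2dual} and the dimension-vector mutation rule. Throughout, I recall that under the bijection between $\mc{R}ep(J)$ and $K^2(\proj J)$, one has $\beta_+(v) = \dim(\top M)(v)$ and $\beta_-(v) = \dim(\top \Omega M)(v) + \dim M^-(v)$, where $\Omega M$ is the first syzygy of $M$.

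Step 1 (case $v=u$). Using the explicit DWZ formulas
\[
\wtd{M}(u) = \frac{\ker\gamma_u}{\img\beta_u}\oplus \img\gamma_u \oplus \frac{\ker\alpha_u}{\img\gamma_u}\oplus M^-(u), \qquad \wtd{M}^-(u) = \frac{\ker\beta_u}{\ker\beta_u\cap\img\alpha_u},
\]
together with the reversed arrows $a^\star,b^\star$ that compute the top of $\wtd{M}$ at $u$, and applying rank–nullity to $\alpha_u,\beta_u,\gamma_u$, I would show that after passing to the reduced part the identities $\beta_+'(u)=\beta_-(u)$ and $\beta_-'(u)=\beta_+(u)$ hold. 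Taking the difference gives $\delta'(u)=-\delta(u)$.

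Step 2 (case $v\ne u$). The vector spaces $M(v)$ and $M^-(v)$ are unchanged, but the quiver acquires composite arrows $[ab]$ with $h(a)=t(b)=u$ running through $u$, which change the module structure at $v$. I would track how these new arrows affect $\top M'$ and $\top\Omega M'$ at $v$: each arrow $u\to v$ in $Q$ produces an outgoing composite from every arrow into $u$, altering the projective cover contribution at $v$ by exactly $[b_{u,v}]_+\beta_+(u)$, while each arrow $v\to u$ alters the syzygy contribution by $[-b_{u,v}]_+\beta_-(u)$. Subtracting yields the claimed correction term.

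The main obstacle is the bookkeeping in Step 2: one must carefully separate the part of $\top M(v)$ and $\top\Omega M(v)$ that is newly generated (or newly hit) from that which was already so before mutation, then confirm that passing to the reduced part of $(\wtd{Q},\wtd{\S})$ removes matched pairs in a way that leaves precisely the $[b_{u,v}]_+\beta_+(u)$ and $[-b_{u,v}]_+\beta_-(u)$ corrections. The signs $[\pm b_{u,v}]_+$ come from splitting $b_{u,v}$ into its incoming and outgoing contributions at $u$, and the asymmetry between them in the two cases of the formula reflects the asymmetry between $\beta_+$ and $\beta_-$ under the DWZ rule.
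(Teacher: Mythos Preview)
The paper does not give its own proof of this lemma: it is stated with the citation \cite[Lemma 5.2]{DWZ2} and no proof environment follows. So there is nothing in the paper to compare your proposal against; the result is imported wholesale from Derksen--Weyman--Zelevinsky.

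That said, your outline is the natural route and is essentially the one taken in \cite{DWZ2}: compute the mutated Betti numbers from the explicit description of $\wtd{M}(u)$, $\wtd{M}^-(u)$, and the new arrow maps, then subtract. Your Step~1 is fine. In Step~2 the bookkeeping you describe is somewhat loose: the correction at $v\ne u$ is not literally a change in $\top M(v)$ caused by the composite arrows $[ab]$ (those do not have $v$ as an endpoint unless $v$ is adjacent to $u$), but rather comes from the reversed arrows $a^\star$, $b^\star$ incident to $u$, which alter which summands of $M(v)$ lie in the radical and which summands of $\Omega M(v)$ are hit. The cleanest way to organize this is to use the identities $\beta_+(u)=\dim(\ker\beta_u/\img\alpha_u)+\dim M^-(u)$ and $\beta_-(u)=\dim(\ker\alpha_u/\img\gamma_u)+\dim M^-(u)$ proved in \cite[(10.4)--(10.5)]{DWZ2}, and then track the change at $v$ via the maps $\alpha_u,\beta_u$ restricted to the $v$-summand; this is exactly what \cite[Lemma 5.2]{DWZ2} does. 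Your alternative for the $\dtc$-formula via \eqref{eq:delta2dual} and the dimension-vector mutation rule is also valid and arguably shorter once the $\delta$-formula is in hand.
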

\noindent We remark that the mutated $\delta$-vector $\delta'$ is {\em not} completely determined by $\delta$ (we need $\beta_-$ and $\beta_+$). But see also Remark \ref{r:genmu}.

\begin{lemma}\label{L:HEmu} {\em \cite[Proposition 6.1, and Theorem 7.1]{DWZ2}} Let $J'=J(\mub(Q,\S))$, $\mc{M}'=\mu_u(\mc{M})$ and $\mc{N}'=\mu_u(\mc{N})$. 
We have that \begin{enumerate}
\item{} $\hom_{J'}(\mc{M}',\mc{N}')-\hom_J(\mc{M},\mc{N})=\beta_{-,\mc{M}}(u)\betac_{-,\mc{N}}(u)-\beta_{+,\mc{M}}(u)\betac_{+,\mc{N}}(u)$;
\item{} $\e_{J'}(\mc{M}',\mc{N}')-\e_J(\mc{M},\mc{N})=\beta_{+,\mc{M}}(u)\beta_{-,\mc{N}}(u)-\beta_{-,\mc{M}}(u)\beta_{+,\mc{N}}(u)$;  
\item[($2^*$)] $\ec_{J'}(\mc{M}',\mc{N}')-\ec_J(\mc{M},\mc{N})=\betac_{-,\mc{M}}(u) \betac_{+,\mc{N}}(u)-\betac_{+,\mc{M}}(u)\betac_{-,\mc{N}}(u)$.
\end{enumerate}
In particular, $\e_J(\mc{M},\mc{M})$ and $\ec_J(\mc{M},\mc{M})$ are mutation invariant. So any reachable representation is rigid.
\end{lemma}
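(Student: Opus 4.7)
My plan is to reduce each identity in the lemma to the comparison of two \emph{triangular identities} obtained from the defining 4-term exact sequences in Definition \ref{D:HomE}. Using $\dim\Hom_J(P_v,N)=\dim N(v)$ and $\dim\Hom_J(M,I_v)=\dim M(v)$, the alternating dimension count immediately yields
\begin{align*}
\hom_J(\mc{M},\mc{N}) - \e_J(\mc{M},\mc{N}) &= \delta_{\mc{M}} \cdot \dv N, \\
\hom_J(\mc{M},\mc{N}) - \ec_J(\mc{M},\mc{N}) &= \dv M \cdot \dtc_{\mc{N}}.
\end{align*}
Writing the analogues for $(\mc{M}',\mc{N}')$ over $J'$ and subtracting reduces parts (2) and ($2^*$) to explicit bookkeeping of $\delta'_{\mc{M}}\cdot\dv N' - \delta_{\mc{M}}\cdot\dv N$ and its injective dual, \emph{provided} I can independently compute $\hom_{J'}(\mc{M}',\mc{N}') - \hom_J(\mc{M},\mc{N})$.

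To handle the dimension-vector change I would use the defining triangle at vertex $u$: combined with the construction of $\wtd\mu_u$, it gives
$$\dim \wtd N(u) = \dim(\ker\gamma_u/\img\beta_u) + \dim\img\gamma_u + \dim(\ker\alpha_u/\img\gamma_u) + \dim N^-(u),$$
and rank-nullity identities applied to $\gamma_u\alpha_u=0$ and $\beta_u\gamma_u=0$ then express $\dv N'(u)$ purely in terms of $\dim N(u)$, $\beta_{\pm,\mc{N}}(u)$ and $\betac_{\pm,\mc{N}}(u)$, while dimensions at vertices $v\neq u$ are unchanged. Combining this with Lemma \ref{L:gdmu} and substituting into the triangular identities would produce the RHS of (2) and ($2^*$) once I know (1). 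For (1), I would compute $\hom$ directly by unpacking $f\in\Hom_J(M,N)$ as a collection of component maps commuting with the arrows: away from $u$ the commutation conditions are identical before and after mutation, while at $u$ the mutation procedure swaps the roles of the three terms in the defining triangle, and a linear-algebra count of maps into and out of the summands of $\wtd N(u)$ yields the RHS of (1).

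The main obstacle will be the reduction step $\wtd\mu_u\leadsto\mu_u$: 2-cycles created in $(\wtd Q,\wtd\S)$ force trivial direct summands of $\wtd{\mc{M}}$ to be cancelled off in passing to $\mu_u(\mc{M})$, which shifts the Betti numbers in ways that must be reconciled with the raw triangle bookkeeping. The key auxiliary fact I would need is that a trivial decorated summand contributes zero to $\Hom_J$, $\E_J$ and $\Ec_J$ against any other decorated representation, so the reduction is invisible to all three quantities being compared and the formulas survive the passage to reduced QPs. Finally, the concluding assertion is immediate from (2) and ($2^*$): setting $\mc{N}=\mc{M}$ forces $\e_{J'}(\mc{M}',\mc{M}') - \e_J(\mc{M},\mc{M}) = \beta_+(u)\beta_-(u) - \beta_-(u)\beta_+(u) = 0$ and analogously for $\ec_J(\mc{M},\mc{M})$, so both are mutation invariant; since any reachable $\mc{M}$ is obtained by mutating a negative representation $(0,M^-)$, for which $\e_J$ trivially vanishes, reachability implies rigidity.
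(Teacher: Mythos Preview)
The paper does not give its own proof of this lemma: it is stated with a citation to \cite[Proposition 6.1 and Theorem 7.1]{DWZ2} and used as a black box. So there is no in-paper argument to compare against.

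Your strategy is sound and in fact parallels the original argument in \cite{DWZ2}. The two triangular identities
\[
\hom_J(\mc{M},\mc{N}) - \e_J(\mc{M},\mc{N}) = \delta_{\mc{M}}(\dv N),\qquad
\hom_J(\mc{M},\mc{N}) - \ec_J(\mc{M},\mc{N}) = (\dv M)\,\dtc_{\mc{N}}
\]
are correct and do reduce (2) and ($2^*$) to (1) together with the explicit mutation formulas for $\delta_{\mc{M}}$, $\dtc_{\mc{N}}$, and $\dv N$ at $u$. Your handling of the reduction step $\wtd\mu_u\leadsto\mu_u$ is also right: trivial summands are invisible to $\Hom$, $\E$, and $\Ec$. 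The concluding sentence is exactly as you say.

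The one place where your proposal is thin is part (1) itself. Saying that ``a linear-algebra count of maps into and out of the summands of $\wtd N(u)$ yields the RHS'' hides all of the actual content: this is precisely \cite[Proposition 6.1]{DWZ2}, whose proof occupies several pages and requires carefully parametrizing $\Hom_{J'}(\wtd M,\wtd N)$ in terms of the data of the original triangle at $u$ (the maps $\alpha_u,\beta_u,\gamma_u$ for both $M$ and $N$), then matching the resulting dimension against $\hom_J(M,N)$ plus correction terms. The identification of those correction terms with $\beta_{-,\mc{M}}(u)\betac_{-,\mc{N}}(u)-\beta_{+,\mc{M}}(u)\betac_{+,\mc{N}}(u)$ further requires the interpretation of the Betti numbers $\beta_{\pm}(u),\betac_{\pm}(u)$ in terms of $\ker/\img$ of $\alpha_u,\beta_u$ (i.e.\ the $h$-vectors of \cite{DWZ2}). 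None of this is difficult in principle, but it is not a one-line count, and your sketch does not indicate how the specific combination on the RHS of (1) emerges. If you intend this as a genuine proof rather than a pointer to \cite{DWZ2}, that computation needs to be written out.
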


\begin{lemma}\label{L:taucommu} {\em \cite[Proposition 7.10]{DF}} The AR-translation $\tau$ commutes with the mutation $\mu_u$ at any vertex $u$.
\end{lemma}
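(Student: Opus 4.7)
The plan is to lift both operations to a common categorical setting where their commutation becomes manifest. The natural choice is the $2$-Calabi--Yau cluster category $\mc{C}_{Q,\S}$ associated to the Jacobi-finite QP (reviewed in Section \ref{S:2CY}), equipped with the canonical cluster-tilting object $T=\bigoplus_{u\in Q_0}T_u$ whose endomorphism algebra realizes $J$. Decorated representations of $J$ correspond bijectively to isomorphism classes of suitable objects of $\mc{C}_{Q,\S}$: the representation $\mc{M}=(M,M^-)$ is represented by the cone $X_{\mc{M}}$ of the projective presentation $d_{\mc{M}}\colon T_-\to T_+$ in $\mc{C}_{Q,\S}$, with shifted summands $\Sigma T_u$ contributing the decorated multiplicity $M^-(u)$.

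Under this dictionary, the AR-translation on $\mc{R}ep(J)$ is induced by the suspension $\Sigma$ of $\mc{C}_{Q,\S}$; this is the $2$-CY identity $\tau\cong\Sigma$ in cluster categories, and it matches the Nakayama-based definition through the computation $\ker(\nu d_{\mc{M}})=\tau\coker(d_{\mc{M}})$ recorded in the preliminaries. On the other hand, mutation $\mu_u$ of the QP corresponds, via the Keller--Yang derived equivalence, to the Iyama--Yoshino mutation $T\rightsquigarrow T'=\mu_u(T)$ of the cluster-tilting object inside the fixed category $\mc{C}_{Q,\S}$. In particular, mutation of decorated representations acts trivially on the underlying objects $X_{\mc{M}}\in\mc{C}_{Q,\S}$; only the reference cluster-tilting object changes. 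Since $\Sigma$ is a triangle auto-equivalence of $\mc{C}_{Q,\S}$ independent of the choice of cluster-tilting object, the identity $\Sigma(X_{\mc{M}})=X_{\tau\mc{M}}$ holds with respect to either $T$ or $T'$, which yields the commutation $\tau\mu_u(\mc{M})=\mu_u\tau(\mc{M})$.

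The main obstacle is the careful setup of the bijection between decorated representations and objects of $\mc{C}_{Q,\S}$, and verifying that $\tau$ really corresponds to $\Sigma$ on the nose with all decorated summands $\Sigma T_u$ tracked correctly through the Nakayama functor. A fully elementary alternative, avoiding cluster categories entirely, is to mimic the definitions directly: reinterpret $\mu_u$ as an explicit operation on $K^2(\proj J)$, apply $\nu$ to transport it to an operation on the analogous homotopy category of injective presentations, and then verify commutation by matching both sides via Lemma \ref{L:gdmu} and the triangle of linear maps $(\alpha_u,\beta_u,\gamma_u)$ defining the mutation. This second route stays at the level of Betti vectors and explicit morphisms but requires considerably more bookkeeping.
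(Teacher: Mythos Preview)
The paper does not supply its own proof of this lemma; it is quoted verbatim from \cite[Proposition 7.10]{DF}. So there is no paper-proof to compare against directly, only the original source.

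Your categorical route is sound and is, in effect, the argument the present paper uses implicitly elsewhere (for instance in the proof of Lemma~\ref{L:rlmu}, where commutation with $\tau^i$ is obtained by passing to $\Sigma^i$ in $\mc{C}$). The essential identity you need is $\wtd{F}(\Sigma\b{M})\cong\tau(\wtd{F}\b{M})$ for every object $\b{M}$; combined with Theorem~\ref{T:twomu} and the fact that any triangle equivalence commutes with suspension, the claim drops out immediately. You rightly flag this identity as the crux. Your justification (``the $2$-CY identity $\tau\cong\Sigma$'') is not quite enough as stated: that gives $\tau_{\mc{C}}=\Sigma$ for the AR-translation \emph{of the triangulated category} $\mc{C}$, whereas what is needed is the compatibility with the AR-translation on $\mc{R}ep(J)$ defined via the Nakayama functor, including the bookkeeping of decorated summands $\Sigma\T_u$ versus $\Sigma^2\T_u$. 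This can be checked (and the paper uses the ordinary-part version $\mc{C}(\T,\Sigma^i\b{M})\cong\tauh^i\mc{M}$ freely in Corollary~\ref{C:long}), but it deserves an explicit sentence or two.

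By contrast, the proof actually given in \cite{DF} is your ``elementary alternative'': it works directly with projective presentations, the explicit DWZ mutation recipe on the triangle $(\alpha_u,\beta_u,\gamma_u)$, and the Nakayama functor, without invoking $\mc{C}_{Q,\S}$ at all. That approach is self-contained and available at the point in the exposition where the lemma appears (Section~\ref{S:QP}, before the cluster category is introduced in Section~\ref{S:2CY}); your categorical proof is cleaner once the machinery is in place but would be a forward reference here.
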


\noindent Finally we mention a long-time conjecture of us.
\begin{conjecture} For a Jacobi-finite QP, any rigid decorated representation can be obtained from a negative representation by a sequence of mutations and some power of $\tau$. 
\end{conjecture}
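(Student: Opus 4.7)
This is stated as an open conjecture, so my proposal is best viewed as a reduction strategy. First I would pass to the $2$-Calabi--Yau cluster category $\C=\CQ$ reviewed in Section~\ref{S:2CY}. By the Palu--Plamondon framework, rigid decorated representations of $J=J(Q,\S)$ correspond, after accounting for the decorated part, to rigid objects in $\C$: the negative decorated simples $S_u^-$ go to the shifted projectives $P_u[1]$, and under this correspondence the AR-translation $\tau$ corresponds to the shift $[1]$ in the $2$-CY setting. Thus the conjecture reformulates as follows: every indecomposable rigid object of $\C$ lies in the orbit of $\{P_u[1]\}_{u\in Q_0}$ under iterated cluster mutation together with some integer shift.

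Next I would invoke the Iyama--Yoshino completion theorem: every rigid object in a $\Hom$-finite $2$-CY category extends to a cluster-tilting object. Since every direct summand of a cluster-tilting object is itself rigid, it suffices to prove that every cluster-tilting object of $\C$ is reachable from $T_0=\bigoplus_u P_u[1]$ by a finite sequence of cluster mutations, up to shift. Via the Adachi--Iyama--Reiten/Plamondon bijection between cluster-tilting objects of $\C$ and support $\tau$-tilting $J$-modules, this is the connectedness of the support $\tau$-tilting exchange graph modulo the $\tau$-action. The inclusion of $\tau^i$ among the allowed moves weakens the requirement: only a single representative of each $\tau$-orbit need be reached.

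A more direct attempt would exploit the tools of this paper. Any rigid $\mc{M}$ satisfies $\Hom_J(\mc{M},\tau\mc{M})=\E_J(\mc{M},\mc{M})^{\ast}=0$ by Lemma~\ref{L:H2E}, and rigidity is preserved under mutation by Lemma~\ref{L:HEmu}(2). One could try an induction on a complexity measure such as $|\beta_-|+|\beta_+|$ attached to the minimal presentation $d_{\mc{M}}$, at each step choosing a vertex $u$ at which $\mu_u$ strictly decreases the measure. The $\delta$-vector transformation in Lemma~\ref{L:gdmu}, combined with the canonical decomposition of presentations and the operators $r_\ep$, $l_\ep$ developed in this paper, should guide the choice of $u$: one wants $u$ to be a ``source'' or ``sink'' of the presentation in an appropriate sense, so that mutation cancels some indecomposable summand and does not reintroduce more.

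The main obstacle, in either approach, is the reachability problem. Connectedness of the support $\tau$-tilting exchange graph is established only in special settings---$\tau$-tilting finite algebras, acyclic quivers, QPs from marked surfaces---and is an open problem in the generality required here. A substantive proof would need a new ingredient, for example an algorithm producing the required mutation sequence from the $\delta$-vector in the spirit of Schofield's $\ext$-algorithm, or a global obstruction ruling out rigid components that fail to meet the $\tau$-orbit of the initial tilting object. Short of such an advance, my plan yields only a reduction of the conjecture to a well-known open reachability statement.
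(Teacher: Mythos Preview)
The paper does not prove this statement: it is explicitly presented as an open conjecture, with the single remark that it is ``equivalent to say that $\tau$ acts transitively on the connected components of the cluster complex of $(Q,\S)$ introduced in \cite{DF}.'' Your proposal correctly identifies the statement as open and arrives at essentially the same reformulation: your passage through Iyama--Yoshino completion and the AIR/Plamondon bijection lands precisely on the connectedness question for the cluster (or support $\tau$-tilting) complex modulo the $\tau$-action, which is what the paper's one-line remark says. Your additional strategy of an inductive complexity reduction via Lemma~\ref{L:gdmu} goes beyond anything the paper offers, but as you yourself note, no such algorithm is known in this generality, so it remains speculative.
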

\noindent This is equivalent to say that $\tau$ acts transitively on the connected components of the cluster complex of $(Q,\S)$ introduced in \cite{DF}.

\section{General Presentations} \label{S:GP}
\subsection{General Presentations} We shall start our discussion by reviewing some results in \cite{DF}. 
We will consider a more general setting where the algebra $A$ is any basic finite-dimensional $k$-algebra, which can be presented as $kQ / I$.

Any $\delta\in \mb{Z}^{Q_0}$ can be written as $\delta = \delta_+ - \delta_-$ where $\delta_+=\max(\delta,0)$ and $\delta_- = \max(-\delta,0)$. Here the maximum is taken coordinate-wise. Lemma \ref{L:homotopy} below motivates the following definition.
$$\PHom_A(\delta):=\Hom_A(P(\delta_-),P(\delta_+)).$$

We say that a {\em general} presentation in $\PHom_A(\delta)$ has property $\heartsuit$ if there is some nonempty open (and thus dense) subset $U$ of $\PHom_A(\delta)$ such that all presentations in $U$ have property $\heartsuit$. For example, a general presentation $d$ in $\PHom_A(\delta)$ has the following properties: $\Hom(d,N)$ has constant dimension for a fixed $N\in \rep A$.
Note that $\E(d,N)$ has constant dimension on $U$ as well. 
We denote these two generic values by $\hom(\delta,N)$ and $\e(\delta,N)$.
Taking $N=A^*$ shows that $\coker(d)$ has a constant dimension vector, which will be denoted by $\dv(\delta)$.

\begin{lemma}[\cite{IOTW}] \label{L:homotopy} For any $\beta_-,\beta_+\in \mb{Z}_{\geq 0}^{Q_0}$, a general presentation in $\Hom_A(P(\beta_-),P(\beta_+))$ is homotopy equivalent to a general presentation in $\PHom_A(\beta_+ - \beta_-)$.
\end{lemma}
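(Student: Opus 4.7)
The plan is to identify the minimal model in $K^b(\proj A)$ of a generic $d\in\Hom_A(P(\beta_-),P(\beta_+))$ with a generic presentation of weight $\delta=\beta_+-\beta_-$, and to check that the identification is algebraic enough to transport genericity both ways. Write $\gamma:=\min(\beta_+,\beta_-)$ componentwise, so that $P(\beta_+)=P([\delta]_+)\oplus P(\gamma)$ and $P(\beta_-)=P([-\delta]_+)\oplus P(\gamma)$.

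First I would cut out the right open locus. Passing to $A/\operatorname{rad}(A)\cong k^{Q_0}$ attaches to every $d$ its \emph{semisimple reduction} $d_{\mr{ss}}$, a block-diagonal $k$-linear map whose block at vertex $u$ is a $\beta_+(u)\times\beta_-(u)$ matrix of rank at most $\gamma(u)$. Let $U\subseteq\Hom_A(P(\beta_-),P(\beta_+))$ be the open locus on which every block attains the maximal rank $\gamma(u)$; it is nonempty because $d_0\oplus\operatorname{id}_{P(\gamma)}\in U$ for any $d_0\in\PHom_A(\delta)$. Using the surjection $\Aut(P(\beta_\pm))\twoheadrightarrow\prod_u\GL_{\beta_\pm(u)}(k)$, choose for each $d\in U$ automorphisms $g,h$ placing every scalar block in Smith form $\bigl(\begin{smallmatrix}I_{\gamma(u)}&0\\0&0\end{smallmatrix}\bigr)$, producing
$$h^{-1}dg=\begin{pmatrix}e&b\\ c&a\end{pmatrix}:P(\gamma)\oplus P([-\delta]_+)\longrightarrow P(\gamma)\oplus P([\delta]_+),$$
in which $e\in\End(P(\gamma))$ reduces to the identity modulo the radical and is therefore itself an automorphism. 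Standard block row/column operations by unipotent automorphisms then yield an isomorphism of two-term complexes $h^{-1}dg\cong\operatorname{id}_{P(\gamma)}\oplus(a-ce^{-1}b)$; stripping the contractible summand gives $d\simeq a-ce^{-1}b\in\PHom_A(\delta)$ in $K^b(\proj A)$.

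To exchange genericity between the two spaces, I would package the previous step as the morphism
$$\Phi:\PHom_A(\delta)\times\Aut(P(\beta_-))\times\Aut(P(\beta_+))\longrightarrow\Hom_A(P(\beta_-),P(\beta_+)),\quad(d_0,g,h)\mapsto h(d_0\oplus\operatorname{id}_{P(\gamma)})g^{-1}.$$
The construction above shows $\operatorname{Im}\Phi\supseteq U$, so $\Phi$ is dominant, while the first projection provides an inverse (up to the group actions) on the orbit structure. Consequently every nonempty Zariski-open $V\subseteq\PHom_A(\delta)$ has $\Phi$-image containing the nonempty open $\{d_0\oplus\operatorname{id}_{P(\gamma)}:d_0\in V\}\subseteq\Hom_A(P(\beta_-),P(\beta_+))$; conversely, every homotopy-invariant open condition on $\Hom_A(P(\beta_-),P(\beta_+))$ pulls back via the minimal-model map on $U$ to an open condition on $\PHom_A(\delta)$. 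This is the required two-sided generic identification.

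The main obstacle is the invertibility of the corner $e$ in Step~1: one needs that an endomorphism of $P(\gamma)$ whose semisimple reduction is the identity is automatically invertible, which is the finite-dimensional Nakayama lemma applied to $\operatorname{rad}(\End(P(\gamma)))$. A secondary subtlety is that $g,h$ depend on $d$ and are not canonical; working modulo the $\Aut(P(\beta_-))\times\Aut(P(\beta_+))$ action is precisely what lets the dominance argument for $\Phi$ carry the homotopy class intact.
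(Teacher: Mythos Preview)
The paper does not supply its own proof of this lemma; it cites \cite{IOTW} and moves on. So there is no in-house argument to compare against, and your proposal should be judged on its own merits.

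Your overall strategy---strip the contractible summand $P(\gamma)\xrightarrow{\sim}P(\gamma)$ via a Schur complement and package the identification through the dominant morphism $\Phi$---is sound, and the use of the semisimple reduction to single out the open locus $U$ is the right move. The invertibility of the corner $e$ via Nakayama is also fine.

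There is, however, a genuine slip in your genericity transfer. You assert that for open $V\subseteq\PHom_A(\delta)$ the set $\{d_0\oplus\operatorname{id}_{P(\gamma)}:d_0\in V\}$ is open in $\Hom_A(P(\beta_-),P(\beta_+))$. It is not: this is the image of $V$ under the affine-linear closed embedding $\iota$, hence only locally closed (of strictly smaller dimension whenever $\gamma\neq 0$). So as written, Direction~1 of the transfer fails. The fix is routine: since $\Phi$ is surjective onto the open set $U$, generic flatness gives an open $U'\subseteq U$ over which $\Phi$ is flat and hence open; then $\Phi(V\times\Aut\times\Aut)\cap U'$ is open and nonempty because $V\times\Aut\times\Aut$ and $\Phi^{-1}(U')$ are nonempty opens in an irreducible source. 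For the converse direction you invoke a ``minimal-model map on $U$'', but you yourself flag that $g,h$ are not canonical, so this is not obviously a morphism. The clean replacement is to take $\Phi^{-1}(W)$, which is open and nonempty by dominance, and project to the first factor; projections from products are open, so $p_1(\Phi^{-1}(W))\subseteq\PHom_A(\delta)$ is the desired open set. With these two repairs your argument goes through.
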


\begin{remark} \label{r:genmu} Due to Lemma \ref{L:homotopy} the $\delta$-vector of a general presentation satisfies $\beta_+ = [\delta]_+$ and $\beta_- = [-\delta]_+$.
	In particular, for general presentations, Lemma \ref{L:gdmu}.(1) reduces to the following rule:
	\begin{align}\label{eq:gmu} \delta'(v)= \begin{cases} -\delta(u) & \text{if $v=u$,}\\ 
			\delta(v) + b_{u,v}[-\delta(u)]_+  & \text{if $b_{u,v}<0$,} \\
			\delta(v) + b_{u,v}[\delta(u)]_+ & \text{if $b_{u,v}>0$.} 
		\end{cases}
	\end{align}
	If one likes, one can combine the last two cases into one $\delta'(v)=\delta(v) + [-b_{u,v}]_+\delta(u) + b_{u,v}[\delta(u)]_+$. 
	We call this the mutation rule for $\delta$-vectors. Later when we write $\mu_u(\delta)$, we refer to this rule. 	
\end{remark}

The presentation space $\PHom_A(\delta)$ comes with a natural group action by $$\Aut_A(\delta):=\Aut_A(P(\delta_-))\times \Aut_A(P(\delta_+)).$$
A rigid presentation in $\PHom_A(\delta)$ has a dense $\Aut_A(\delta)$-orbit \cite{DF}.
In particular, a rigid presentation is always general.

As explained in \cite{DF}, the functions $\hom(-,-)$ and $\e(-,-)$ on $\PHom_A(\delta_1)\times\PHom_A(\delta_2)$ are upper semi-continuous.
We will denote their generic values by $\hom(\delta_1,\delta_2)$ and $\e(\delta_1,\delta_2)$.
We say a presentation $d_2$ is a quotient presentation of $d$ if there is a commutative diagram \eqref{eq:subp} with exact rows
\begin{equation}\label{eq:subp} \xymatrix{0\ar[r] & P_-^1\ar[d]_{d_1} \ar[r] &  P_- \ar[d]_d \ar[r] & P_-^2\ar[d]_{d_2} \ar[r] & 0 \\
		0\ar[r] & P_+^1 \ar[r] & P_+ \ar[r] & P_+^2  \ar[r] & 0}
\end{equation}
We may also write this diagram as the exact sequence:
$$0\to d_1\to d\to d_2\to 0.$$

\begin{theorem}[{\cite[Theorem 3.10]{DF}}]\label{T:subp} Let $\beta_+=[\delta_{1}]_+ +[\delta_{2}]_+$ and $\beta_-=[-\delta_{1}]_+ +[-\delta_{2}]_+$. Then a general presentation in $\Hom_A(P(\beta_-),P(\beta_+))$ has a quotient presentation in $\PHom_A(\delta_2)$ if and only if $\e(\delta_1,\delta_2)=0$.
\end{theorem}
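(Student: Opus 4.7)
The plan is to adapt Schofield's classical tangent-space argument for Grassmannians of subrepresentations to the setting of projective presentations. A quotient presentation as in~\eqref{eq:subp} of weight $\delta_2$ is equivalent to the data of a pair of projective submodules $W_-^1\subset P(\beta_-)$ isomorphic to $P([-\delta_1]_+)$ and $W_+^1\subset P(\beta_+)$ isomorphic to $P([\delta_1]_+)$, with quotients $P([-\delta_2]_+)$ and $P([\delta_2]_+)$ respectively, subject to the compatibility $d(W_-^1)\subset W_+^1$. Since both submodule and quotient are projective, each short exact sequence splits, so the Grassmannians $\Gr_\pm$ of such submodules of $P(\beta_\pm)$ are single smooth $\Aut_A(P(\beta_\pm))$-orbits with tangent spaces $T_{W_\pm^1}\Gr_\pm \cong \Hom_A(W_\pm^1, P(\beta_\pm)/W_\pm^1)$. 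I form the smooth irreducible incidence variety
\begin{equation*}
Z \;=\; \bigl\{(d, W_-^1, W_+^1) : d(W_-^1)\subset W_+^1\bigr\} \;\subset\; \Hom_A(P(\beta_-),P(\beta_+)) \times \Gr_- \times \Gr_+
\end{equation*}
(an affine bundle over $\Gr_-\times\Gr_+$), and reduce the theorem to showing that the first projection $\pi_1 : Z \to \Hom_A(P(\beta_-),P(\beta_+))$ is dominant iff $\e(\delta_1,\delta_2) = 0$.

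The heart of the argument is a tangent-space computation at a point $(d, W_-^1, W_+^1)\in Z$. After choosing complementary summands $W_\pm^2\subset P(\beta_\pm)$ isomorphic to $P([-\delta_2]_+)$ and $P([\delta_2]_+)$, the presentation $d$ takes block form $\begin{pmatrix} d_1 & \phi \\ 0 & d_2 \end{pmatrix}$, with $d_1:W_-^1\to W_+^1$ and $d_2:W_-^2\to W_+^2$ the induced kernel and quotient presentations of weights $\delta_1$ and $\delta_2$. Parametrizing moved submodules by $(\dot{W}_-, \dot{W}_+)\in \Hom_A(W_-^1, W_-^2)\oplus \Hom_A(W_+^1, W_+^2)$ and a tangent $\dot{d}$ by its block components $(\dot{d}_1, \dot{\phi}, \dot{\psi}, \dot{d}_2)$, a first-order computation linearizes the condition $d(W_-^1)\subset W_+^1$ to the single equation $\dot{\psi} = \dot{W}_+ d_1 - d_2\dot{W}_-$. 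Consequently, the differential $T\pi_1$ is surjective iff the linear map
\begin{equation*}
\mu : \Hom_A(W_+^1, W_+^2)\oplus \Hom_A(W_-^1, W_-^2) \longrightarrow \Hom_A(W_-^1, W_+^2), \qquad (\alpha,\beta)\mapsto \alpha d_1 - d_2\beta,
\end{equation*}
is surjective.

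A snake-lemma computation, invoking right exactness of $\E(d_1,-)$ along the factorization $P([-\delta_2]_+)\twoheadrightarrow \img d_2\hookrightarrow P([\delta_2]_+)$, identifies
\begin{equation*}
\coker \mu \;\cong\; \E(d_1, P([\delta_2]_+))\,\big/\,\img\bigl(\E(d_1, P([-\delta_2]_+)) \xrightarrow{d_2\,\cdot} \E(d_1, P([\delta_2]_+))\bigr) \;\cong\; \E(d_1,\coker d_2) \;=\; \E(d_1,d_2),
\end{equation*}
of dimension $\e(d_1,d_2)$. The map $Z\to \PHom_A(\delta_1)\times\PHom_A(\delta_2)$ sending $(d, W_-^1, W_+^1)$ to the induced $(d_1, d_2)$ is surjective (it admits the direct-sum section $(d_1, d_2)\mapsto (d_1\oplus d_2, W_-^1, W_+^1)$), so a generic point of $Z$ has generic $(d_1, d_2)$ and $\e(d_1, d_2) = \e(\delta_1, \delta_2)$. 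Combining these, $T\pi_1$ is surjective at a generic point iff $\e(\delta_1, \delta_2) = 0$, which, for a morphism of smooth irreducible varieties in characteristic zero, is equivalent to dominance of $\pi_1$. The main obstacle I anticipate is not the tangent computation itself but rather verifying carefully that the induced quotient $d_2$ in the block decomposition is generic of weight $\delta_2$ whenever $(d, W_-^1, W_+^1)$ is a general point of $Z$---this requires a short argument using the section above combined with upper semi-continuity of $\e(-,-)$---and checking irreducibility of $Z$ through the Grassmannian bundle structure.
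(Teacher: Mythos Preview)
The paper does not give its own proof of this statement; it is quoted verbatim from \cite[Theorem~3.10]{DF} and used as a black box. Your argument is correct and is essentially the proof that appears in the original reference: the incidence variety $Z$ fibred over $\Gr_-\times\Gr_+$, the first-order linearization $\dot\psi=\dot W_+d_1-d_2\dot W_-$, the identification $\coker\mu\cong\E(d_1,\coker d_2)$, and the appeal to generic smoothness in characteristic zero are exactly the steps carried out in \cite{DF}, which in turn adapts Schofield's tangent-space argument for subrepresentations to the setting of presentations.

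One small point worth tightening: the ``map'' $Z\to\PHom_A(\delta_1)\times\PHom_A(\delta_2)$ you describe is not globally well-defined as a morphism, since $d_2$ depends on a choice of complement $W_\pm^2$. The clean fix (implicit in \cite{DF}) is to use the transitive $\Aut_A(P(\beta_-))\times\Aut_A(P(\beta_+))$-action on $\Gr_-\times\Gr_+$ to reduce the whole analysis to a single fixed fibre, where the projection to $(d_1,d_2)$ is a genuine linear surjection; this yields the genericity of $(d_1,d_2)$ you need without any global choice.
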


The following notion is important throughout this paper.
Let $U$ be a subset of a product $\prod_{i=1}^r X_i$ of topological spaces. We say elements in $U$ can be chosen to be general in some $X_i$ if $p_i(U)$ contains an open subset of $X_i$. 
We say elements in $U$ can be chosen to be {\em general as a pair} in $X_i$ and $X_j$ if $(p_i,p_j)(U)$ contains an open subset of $X_i\times X_j$.
Similarly we can talk about generalness in a triple and so on.

Recall from \cite{DF} that $\E(d_2,d_1)$ is a vector space quotient of $\Hom_A(P_-^2,P_+^1)$ so
any diagram \eqref{eq:subp} gives rises to an element $\eta\in \E(d_2,d_1)$.
\begin{lemma}\label{L:sqgen} Keep the notations in Theorem \ref{T:subp}, and suppose that a general presentation in $\Hom_A(P(\beta_-),P(\beta_+))$ has a quotient presentation $d_2$ in $\PHom_A(\delta_2)$.
Let $d_1$ be the corresponding subpresentation and $\eta$ be the corresponding element in $\E(d_2,d_1)$.
Then we may assume $(d_1,d_2)$ is a general pair and $\eta$ is general in $\E(d_2,d_1)$.
\end{lemma}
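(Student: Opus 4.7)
The plan is to set up an incidence variety and exploit its irreducibility together with the dominance guaranteed by Theorem \ref{T:subp}. Consider
$$\mathcal{I} \;=\; \PHom_A(\delta_1) \times \PHom_A(\delta_2) \times \Hom_A(P([-\delta_2]_+), P([\delta_1]_+)),$$
which is an affine space (hence irreducible), and the morphism
$$\Psi : \mathcal{I} \to \Hom_A(P(\beta_-), P(\beta_+)), \qquad (d_1,d_2,\phi) \mapsto \begin{pmatrix} d_1 & \phi \\ 0 & d_2 \end{pmatrix},$$
writing $P(\beta_{\pm}) = P([\mp\delta_1]_+) \oplus P([\mp\delta_2]_+)$ (with the sign conventions swapped appropriately). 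By construction, every diagram \eqref{eq:subp} arises, up to isomorphism of the short exact sequence, from a point of $\mathcal{I}$ acted on by $G := \Aut_A(P(\beta_-)) \times \Aut_A(P(\beta_+))$. The hypothesis and Theorem \ref{T:subp} then say exactly that the $G$-saturation $G \cdot \Psi(\mathcal{I})$ is dense in $\Hom_A(P(\beta_-), P(\beta_+))$, equivalently, via Lemma \ref{L:homotopy}, dense in $\PHom_A(\delta)$ after passing to homotopy equivalence classes.

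Next I would identify the open subset $\mathcal{I}^{\circ} \subseteq \mathcal{I}$ cut out by the conditions that (a) $(d_1,d_2)$ achieves the generic values of all the relevant upper semi-continuous functions on $\PHom_A(\delta_1) \times \PHom_A(\delta_2)$ (in particular $\dim \E(d_2,d_1) = \e(\delta_2,\delta_1)$), and (b) the image $\eta$ of $\phi$ under the quotient map $\Hom_A(P([-\delta_2]_+), P([\delta_1]_+)) \twoheadrightarrow \E(d_2,d_1)$ lies in a preassigned open dense subset of the fiber. Condition (a) is open dense by definition of generic values. Over the locus where (a) holds, $\E(-,-)$ restricts to a vector bundle (being the cokernel of a map of trivial bundles of locally constant rank), so (b) cuts out an open subset of that bundle, and therefore $\mathcal{I}^{\circ}$ is an open dense subset of the irreducible variety $\mathcal{I}$.

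To conclude, let $W$ be any preassigned open dense subset of $\Hom_A(P(\beta_-), P(\beta_+))$ encoding the meaning of ``$d$ is general''. Its $G$-saturation is open and, by the previous step, meets $\Psi(\mathcal{I})$; hence $\Psi^{-1}(G \cdot W)$ is a nonempty $G$-stable open subset of $\mathcal{I}$. Since $\mathcal{I}$ is irreducible, the intersection $\mathcal{I}^{\circ} \cap \Psi^{-1}(G \cdot W)$ is nonempty, and any point in it yields the desired $(d_1,d_2,\eta)$: the pair $(d_1,d_2)$ is general by (a), the extension class $\eta$ is general in $\E(d_2,d_1)$ by (b), and the middle term $d$ obtained by $\Psi$ is, after a suitable $G$-action and homotopy equivalence, general in $\PHom_A(\delta)$.

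The main obstacle I anticipate is bookkeeping rather than conceptual: one has to make the identification between points of $\mathcal{I}$ modulo the obvious symmetries (the subgroup of $G$ preserving the block form, plus the homotopy ambiguity $\phi \sim \phi + d_1 \psi + \psi' d_2$) and honest isomorphism classes of diagrams \eqref{eq:subp}, and confirm that the quotient map $\phi \mapsto \eta$ is a morphism of vector bundles on the locus where (a) holds, so that ``general $\phi$'' really corresponds to ``general $\eta$''. Both the $G$-saturation in $\Hom_A(P(\beta_-),P(\beta_+))$ and the homotopy equivalence from Lemma \ref{L:homotopy} must be threaded through carefully; once that is done, the argument reduces to the elementary fact that two open dense subsets of an irreducible variety intersect.
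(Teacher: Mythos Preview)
Your proposal is correct and takes essentially the same approach as the paper. The paper defines the block upper-triangular subvariety $\PHom_A(\delta_1\mid\delta_2)=\{d : p_+ d|_{P_-^1}=0\}$, observes it is isomorphic to your $\mathcal{I}$, and uses dominance of $\Aut_A(\delta)\times\PHom_A(\delta_1\mid\delta_2)\to\Hom_A(P(\beta_-),P(\beta_+))$ exactly as you do; the only difference is that the paper phrases the conclusion as ``open sets pull back and project to open sets'' rather than ``two dense opens in an irreducible variety meet''.

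The obstacles you flag at the end are not genuine. You do not need to quotient $\mathcal{I}$ by the block-preserving symmetries or track isomorphism classes of diagrams: it suffices that every point of $\mathcal{I}$ produces a diagram \eqref{eq:subp} and that, up to the $G$-action, every such diagram arises this way. Likewise you do not need a vector-bundle argument for $\phi\mapsto\eta$: for each fixed $(d_1,d_2)$ this is a surjective linear map $\Hom_A(P_-^2,P_+^1)\twoheadrightarrow\E(d_2,d_1)$, hence open, so a general $\phi$ already yields a general $\eta$. Finally, the detour through Lemma~\ref{L:homotopy} and $\PHom_A(\delta)$ is unnecessary here, since the statement concerns $\Hom_A(P(\beta_-),P(\beta_+))$ directly.
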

\begin{proof} Fix a subrepresentation $P_+^1\cong P([\delta_1]_+)$ of $P_+$ such that $P_+/P_+^1 \cong P([\delta_2]_+)$
	and let $p_+$ be the natural projection $P_+ \to P_+/P_+^1$. We also do the similar thing for $P_-$.
We define the variety 
$$\PHom_A(\delta_1\mid \delta_2):=\{d \in \PHom_A(\delta) \mid p_+ d\mid_{P_-^1}=0 \}.$$
Our assumption that a general presentation in $\PHom_A(\delta)$ has a quotient presentation in $\PHom_A(\delta_2)$ implies that the action morphism
$$\Aut_A(\delta) \times \PHom_A(\delta_1\mid \delta_2) \to \PHom_A(\delta)$$ is dominant. 
It is clear from the definition that 
$$\PHom_A(\delta_1\mid\delta_2) \cong \PHom_A(\delta_1)\times \PHom_A(\delta_2)\times \Hom_A(P_-^2,P_+^1).$$
So any open subset of $\PHom_A(\delta)$ pulls back and projects to a nonempty open subset of $\PHom_A(\delta_1)\times \PHom_A(\delta_2)\times \Hom_A(P_-^2, P_+^1)$.
Hence, we may assume $(d_1,d_2)$ is a general pair.
As $\E(d_2,d_1)$ is a vector space quotient of $\Hom_A(P_-^2,P_+^1)$, the general element in $\Hom_A(P_-^2,P_+^1)$ projects to a general element in $\E(d_2,d_1)$.
\end{proof}

We write $\delta = \delta_1\oplus \delta_2$ if a general presentation of weight $\delta$ is a direct sum of a presentation of weight $\delta_1$ and a presentation of weight $\delta_2$.

\subsection{$\tau$ Permutes Principal Components}
In \cite[Section 2]{DF} we considered the following incidence variety $Z=Z(Y,X)$:
\begin{equation}\label{eq:Z} \{(f,\pi,M)\in Y\times\Hom(P_+, k^\alpha)\times X\mid \pi\in\Hom_A(P_+,M) \text{ and } P_-\xrightarrow{f} P_+\xrightarrow{\pi} M\to 0 \text{ is exact}\}
\end{equation}
for any $\Aut_A(P_-)\times \Aut_A(P_+)$-stable subvariety $Y$ of $\Hom_A(P_-, P_+)$ and $\GL_\alpha$-stable subvariety $X$ of $\rep_\alpha(A)$.
It comes with two projections $p_1: Z\to \Hom_A(P_-,P_+)$ and $p_2:Z\to \rep_\alpha(A)$.
The projection $p_1$ is a principal $\GL_\alpha$-bundle over its image (\cite[Lemma 2.4]{DF}).
In particular, $p_1$ preserves irreducibility.

Let $\alpha$ be the maximal rank of $\Hom_A(P_-, P_+)$,
and $U$ be the open subset of $\Hom_A(P_-, P_+)$ attaining the maximal rank $\alpha$.
As $U$ is irreducible and $p_2p_1^{-1}$ preserves irreducibility, $W=p_2(p_1^{-1}(U))$ lies in a single component of $\rep_\alpha(A)$.
\begin{definition} \label{D:PC}
	We call this component the {\em principal component} of $\delta$, denoted by $\PC(\delta)$.
\end{definition}
\noindent As shown in \cite{P}, principal components are exactly the {\em strongly reduced components} introduced in \cite{GLS}. This fact also follows from our Lemma \ref{L:gqiso} below. 

\begin{theorem}[{\cite[Theorem 2.3]{DF} }]
\label{L:P2R} For the above dimension vector $\alpha$, the image of the projection 
$p_2: Z(\Hom_A(P_-,P_+), \rep_\alpha(A)) \to \rep_\alpha(A)$
is open in $\rep_\alpha(A)$.
\end{theorem}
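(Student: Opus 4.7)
The plan is to characterize $p_2(Z)$ explicitly as an open subset of $\rep_\alpha(A)$ cut out by two upper-semicontinuity conditions.

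First, I would describe $p_2(Z)$ concretely. A module $M \in \rep_\alpha(A)$ lies in $p_2(Z)$ iff it admits a presentation $P_- \to P_+ \to M \to 0$, equivalently iff there is a surjection $\pi : P_+ \twoheadrightarrow M$ whose kernel is a quotient of $P_-$. Existence of such $\pi$ requires the top vector $\gamma_+(M) := \dv(M / M\!\cdot\!\mathrm{rad})$ to satisfy $\gamma_+(M) \leq \beta_+$ componentwise. For any such $\pi$, Schanuel's lemma applied to $\pi$ and to the minimal cover $P(\gamma_+(M)) \twoheadrightarrow M$ (whose kernel is the first syzygy $\Omega M$, with top vector $\gamma_-(M)$) yields
\[
\ker(\pi) \;\cong\; \Omega M \oplus P(\beta_+ - \gamma_+(M)),
\]
so the isomorphism class of $\ker(\pi)$ depends only on $M$, not on $\pi$. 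The requirement that $\ker(\pi)$ be a quotient of $P_-$ then becomes $\gamma_-(M) + \beta_+ - \gamma_+(M) \leq \beta_-$, equivalently $\delta_M \geq \delta$ componentwise. Hence
\[
p_2(Z) \;=\; \{ M \in \rep_\alpha(A) : \gamma_+(M) \leq \beta_+ \text{ and } \delta_M \geq \delta \}.
\]

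Second, I would verify that both conditions define open subsets. Openness of $\{\gamma_+ \leq \beta_+\}$ is standard: $M \mapsto \dim(M \!\cdot\! \mathrm{rad})$ is lower semicontinuous on $\rep_\alpha(A)$ (being the dimension of the sum of images of the arrow-action maps), so $\gamma_+$ is upper semicontinuous componentwise. For the $\delta$-condition, I would introduce the incidence family
\[
V = \{(M,\pi) : M \in \rep_\alpha(A),\ \pi \in \Hom_A(P_+, M) \text{ surjective}\},
\]
with projection $q : V \to \rep_\alpha(A)$, which restricts to a Zariski-locally trivial bundle onto the open locus $\{\gamma_+ \leq \beta_+\}$. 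The assignment $(M,\pi) \mapsto \ker(\pi)$ gives a flat family of submodules of $P_+$ of fixed dimension vector $\dv P_+ - \alpha$ over $V$, so the locus $\{\dv(\mathrm{top}(\ker\pi)) \leq \beta_-\}$ is open in $V$ by upper semicontinuity of the top applied fiberwise. By the Schanuel identity above this open locus is $q$-saturated, so its image under the open map $q$ is open in $\rep_\alpha(A)$, and equals $\{\gamma_+ \leq \beta_+\} \cap \{\delta_M \geq \delta\} = p_2(Z)$.

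The main obstacle will be the descent step: verifying that the kernel construction yields a geometrically flat family over $V$, so upper semicontinuity of the top genuinely applies, and confirming that $q$ is open (not merely surjective) on the locus of interest. Once this bundle-theoretic scaffolding is in place, the Schanuel reduction is exactly what identifies the open condition on $V$ with a condition depending only on $M$, and everything else reduces to routine semicontinuity.
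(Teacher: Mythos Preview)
The paper does not prove this statement; it simply cites it as \cite[Theorem~2.3]{DF}. So there is no in-paper proof to compare against, and your proposal has to stand on its own.

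Your approach is sound. The Schanuel characterization of $p_2(Z)$ is correct and clean: $M$ admits a presentation by $P_-\to P_+$ exactly when $\gamma_+(M)\le\beta_+$ and $\gamma_-(M)+(\beta_+-\gamma_+(M))\le\beta_-$. The first condition is open by lower semicontinuity of the radical dimension at each vertex.

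The only point requiring care is the one you flag: descent from $V$ to $\rep_\alpha(A)$. But this needs less machinery than you suggest. Since $\Hom_A(P_+,M)$ is naturally $\bigoplus_u k^{\alpha(u)\beta_+(u)}$ independently of the $A$-structure on $M$, the ambient bundle is globally trivial; thus $V$ is open in $\rep_\alpha(A)\times k^N$ and $q$ is the first projection, hence open. The map $(M,\pi)\mapsto\ker\pi$ is a morphism to $\prod_u\Gr\bigl(\dim P_+(u)-\alpha(u),\,P_+(u)\bigr)$, and the function $K\mapsto\dim(K/K\cdot\mathrm{rad})(u)$ is $\dim K(u)$ minus the rank of an algebraic family of linear maps, hence upper semicontinuous on $V$. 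So you do not actually need a flatness argument for the kernel family; the trivial-bundle structure already gives what you want. Schanuel then ensures $q$-saturation, and you are done.

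One minor remark: your argument nowhere uses that $\alpha$ is the \emph{generic} cokernel dimension; it shows $p_2(Z)$ is open in $\rep_\alpha(A)$ for arbitrary $\alpha$ (possibly empty). This is slightly stronger than what the paper states, but harmless.
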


\begin{lemma} \label{L:U0} There is a nonempty open $\Aut_A(\delta)$-stable subset $U_0$ of $\PHom_A(\delta)$ such that there is a morphism 
	$\pi: U_0 \to \PC(\delta)$ such that $\pi(d)$ is isomorphic to $\coker(d)$ for all $d\in U_0$.	
\end{lemma}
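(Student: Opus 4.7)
My plan is to invoke the principal bundle structure of the incidence variety $Z$ from \eqref{eq:Z}, applied with $Y = \PHom_A(\delta)$ and $X = \rep_\alpha(A)$ for $\alpha = \dv(\delta)$. By \cite[Lemma 2.4]{DF}, the first projection $p_1: Z \to \PHom_A(\delta)$ is a principal $\GL_\alpha$-bundle over its image. I will construct a section of this bundle over a suitable open set, compose with $p_2$ to obtain the candidate $\pi$, and then arrange its domain to be $\Aut_A(\delta)$-stable.

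The preliminary step is to restrict to the open subset $U \subseteq \PHom_A(\delta)$ where $\coker(d)$ has dimension vector $\alpha$. Openness follows from upper semi-continuity of $\dim \coker(d)(u) = \hom_A(d, I_u)$, and $U$ is $\Aut_A(\delta)$-stable because the cokernel dimension vector is invariant along the $\Aut_A(\delta)$-orbits. By Theorem \ref{L:P2R}, the image of $p_2$ applied to $p_1^{-1}(U)$ lies in $\PC(\delta)$. Since $\GL_\alpha = \prod_u \GL_{\alpha_u}$ is a product of general linear groups, it is a \emph{special} algebraic group, so the $\GL_\alpha$-torsor $p_1^{-1}(U) \to U$ is Zariski-locally trivial. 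This gives a nonempty open $V \subseteq U$ and a section $s: V \to p_1^{-1}(U)$, and $\pi_V := p_2 \circ s$ is a morphism $V \to \PC(\delta)$ with $\pi_V(d) \cong \coker(d)$.

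To upgrade $V$ to an $\Aut_A(\delta)$-stable open $U_0$, I will use the natural $\Aut_A(\delta)$-action on $Z$ given by $g \cdot (f, \pi, M) = (g_+ f g_-^{-1}, \pi g_+^{-1}, M)$, which lifts the action on $\PHom_A(\delta)$ and commutes with the $\GL_\alpha$-action. Setting $U_0 := \Aut_A(\delta) \cdot V$ yields a union of open translates $g \cdot V$, hence an open and $\Aut_A(\delta)$-stable subset; translating $s$ by $g \in \Aut_A(\delta)$ produces a section over each $g \cdot V$.

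The main obstacle will be to glue these translated sections into a single morphism $\pi: U_0 \to \PC(\delta)$: on overlaps $g \cdot V \cap g' \cdot V$, the two transported sections can a priori differ by a $\GL_\alpha$-valued transition. Viewing $s$ as a local trivialization of the universal cokernel sheaf $\mathbf{M}$ on $U$, this gluing obstruction is precisely the transition cocycle of $\mathbf{M}$. Because only $\pi(d) \cong \coker(d)$ is required (not a canonical identification of the cokernel), I expect the remedy to be shrinking $U_0$ to a smaller $\Aut_A(\delta)$-stable open on which a global trivialization of $\mathbf{M}$ exists; making this shrinking explicit is where the real technical work will lie.
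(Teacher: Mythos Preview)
Your core approach---use that $p_1:Z\to\PHom_A(\delta)$ is a principal $\GL_\alpha$-bundle, take a local section $s$ over an open $V$, and set $\pi=p_2\circ s$---is exactly the paper's argument. The paper's proof is two sentences: it simply declares $U_0$ to be an open subset on which a section $\iota_1$ of $p_1$ exists, and takes $\pi=p_2\iota_1$.

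In particular, the paper does \emph{not} address the $\Aut_A(\delta)$-stability of $U_0$ at all; it stops precisely where you already have $\pi_V$. Your subsequent worry about translating and gluing sections over $U_0=\Aut_A(\delta)\cdot V$ is more scrupulous than the paper's own treatment. The gluing obstruction you describe is genuine (distinct $g$-translates of $s$ differ by a $\GL_\alpha$-cocycle), and the paper offers no mechanism to resolve it. So either regard the stability clause as a minor gap in the paper's presentation, or note that for the only downstream use (Lemma~\ref{L:gqiso}) one really needs the composite $q\circ\pi$ with the quotient map $q:W\to W/\GL_{\dv(\delta)}$, and \emph{that} composite is constant on $\Aut_A(\delta)$-orbits and hence poses no gluing issue. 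You do not need to ``shrink $U_0$ to trivialize $\mathbf{M}$''; that is chasing the wrong target.
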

\begin{proof} Since the map $p_1$ is a principal $\GL_\alpha$-bundle, it has a section $\iota_1$ on some open subset of $U_0$ of $\PHom_A(\delta)$.
The composition $p_2\iota_1$ is our desired morphism.
\end{proof}

By a $G$-variety, we mean a variety with an action of an algebraic group $G$. 
By a theorem of Rosenlicht (\cite{R}, see also \cite[Theorem 6.2]{Do}), any irreducible $G$-variety $X$ contains a non-empty open $G$-stable subset $X_0$ which admits a {\em geometric quotient} $X_0/G$. For the definition of geometric quotient, we refer readers to \cite[6.1]{Do}.

We shall denote by $\e_A(\delta)$ the generic value of $\e_A(d,d)$ for $d\in\PHom_A(\delta)$.
Note the difference between $\e_A(\delta)$ and $\e_A(\delta,\delta)$.
The latter does not require the two arguments $d_1$ and $d_2$ to be the same in $\e_A(d_1,d_2)$.
\begin{lemma} \label{L:gqiso} There is a non-empty open $\Aut_A(\delta)$-stable subset $U$ of $\PHom_A(\delta)$ and a non-empty open $\GL_{\dv(\delta)}$-stable subset $W$ of its principal component $\PC(\delta)$ such that there is an isomorphism of geometric quotients
	$$U / \Aut_A(\delta) \to W / \GL_{\dv(\delta)}.$$
Moreover the dimension of the quotient $U / \Aut_A(\delta)$ is equal to $\e_A(\delta)$.
\end{lemma}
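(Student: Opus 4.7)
The plan is to construct the claimed isomorphism of quotients from the morphism $\pi$ of Lemma \ref{L:U0} and to identify the quotient dimension with $\e_A(\delta)$ via the infinitesimal $\Aut_A(\delta)$-action on $\PHom_A(\delta)$.

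First I would take $U_0\subseteq\PHom_A(\delta)$ and the morphism $\pi\colon U_0\to \PC(\delta)$, $d\mapsto[\coker(d)]$, supplied by Lemma \ref{L:U0}. Shrinking $U_0$ to an $\Aut_A(\delta)$-stable open, I may assume every $d\in U_0$ is a minimal presentation (with $\beta_+=[\delta]_+$ and $\beta_-=[-\delta]_+$) and that the orbit dimension $\dim(\Aut_A(\delta)\cdot d)$ is constant on $U_0$. By Theorem \ref{L:P2R}, $\pi(U_0)$ is dense in $\PC(\delta)$, so its $\GL_{\dv(\delta)}$-saturation contains a nonempty $\GL_{\dv(\delta)}$-stable open $W_0\subseteq\PC(\delta)$.

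Next, applying Rosenlicht's theorem on each side, I shrink $U_0,W_0$ to invariant opens $U,W$ admitting geometric quotients, arranged so that $\pi(U)\subseteq W$. The universal property of the geometric quotient gives a morphism $\bar\pi\colon U/\Aut_A(\delta)\to W/\GL_{\dv(\delta)}$. For bijectivity on closed points, surjectivity is by construction, and injectivity follows from the standard fact that two minimal projective presentations of isomorphic $J$-modules are isomorphic as complexes in $K^b(\proj J)$, which identifies them via an element of $\Aut_A(\delta)$. In characteristic zero this birational bijection is then an isomorphism, either by Zariski's main theorem after shrinking to the normal locus, or by checking directly that $\bar\pi$ induces isomorphisms on tangent spaces at a general point.

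Finally, for the dimension formula, at a general $d\in U$ the tangent space to $\PHom_A(\delta)$ is $\Hom_A(P_-,P_+)$, and the tangent space to $\Aut_A(\delta)\cdot d$ is the image of the infinitesimal action map
$$\End_A(P_+)\oplus\End_A(P_-)\xrightarrow{(\alpha_+,\alpha_-)\mapsto \alpha_+ d-d\alpha_-}\Hom_A(P_-,P_+).$$
A diagram chase, applying the defining exact sequence of Definition \ref{D:HomE} with $N=M=\coker(d)$, identifies the kernel of this map with $\End_{K^b(\proj J)}(d)$ and its cokernel with $\E(d,d)$. Hence $\dim U/\Aut_A(\delta)=\dim T_d\PHom_A(\delta)-\dim(\Aut_A(\delta)\cdot d)=\dim\E(d,d)=\e_A(\delta)$, as desired. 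The main obstacle I foresee is precisely this tangent-space identification, which requires carefully matching the two-term action complex with the four-term sequence defining $\E$; once this is in hand, the rest of the argument reduces to bookkeeping with Rosenlicht and open shrinking.
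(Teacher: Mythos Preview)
Your proposal is correct and follows essentially the same route as the paper: use Lemma~\ref{L:U0} and Rosenlicht to build the morphism of quotients, check bijectivity via uniqueness of minimal presentations, and compute the quotient dimension from the infinitesimal action map whose cokernel is $\E(d,d)$. The paper outsources the tangent-space identification you flag as the main obstacle to \cite[Lemma~3.6]{DF} rather than redoing the diagram chase, but otherwise the arguments coincide.
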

\begin{proof} Let $U_0$ be an open subset of $\PHom_A(\delta)$ as in Lemma \ref{L:U0}. We get a morphism of varieties $\pi: U_0 \to \PC(\delta)$.
Let $W$ be the open subset of $\PC(\delta)$ such that $q: W\to W/\GL_{\dv(\delta)}$ is a geometric quotient.
Let $U_1$ be the open subset of $\PHom_A(\delta)$ such that $U_1\to U_1/\Aut_A(\delta)$ is a geometric quotient, and set $U = U_0\cap U_1\cap \pi^{-1}(W)$.
The composition gives $q\pi\mid_U: U\twoheadrightarrow W/\GL_{\dv(\delta)}$.
Recall that a geometric quotient is also a categorical quotient \cite{Do}.
This map is constant on the $\Aut_A(\delta)$-orbits of $U$ so it descends to $U/\Aut_A(\delta) \to W/\GL_{\dv(\delta)}$.
This is a bijective regular map, and hence an isomorphism on some open subsets, which we still denote by $U/\Aut_A(\delta)$ and $W/\GL_{\dv(\delta)}$.

For the statement about the dimension, we have that
\begin{align}\notag \dim (U / \Aut_A(\delta)) &= \dim U - \dim \Aut_A(\delta) + \min_{d\in\PHom_A(\delta)} (\dim\Aut_A(\delta)_d) \\
\label{eq:dim} &= \dim \PHom_A(\delta) - \dim \Aut_A(\delta) + \dim (\Aut_A(\delta)_d) 
\end{align}
where $d$ is a general presentation of weight $\delta$ and $\Aut_A(\delta)_d$ is the stabilizer of $d$ in $\Aut_A(\delta)$.
It follows from \cite[Lemma 3.6]{DF} that the Lie algebra of $\Aut_A(\delta)_d$ can be identified with the kernel of the map 
$f=(d_+, -d_-)$, where $d_+, d_-$ is the induced map by applying $\Hom_A(-,P_+)$ and $\Hom_A(P_-,-)$ to $d$ respectively.
\begin{equation} \label{eq:doublec} \xymatrix{\Hom_A(P_+,P_-) \ar[r] \ar[d] &\Hom_A(P_-, P_-) \ar[d]^{-d_-} \\  \Hom_A(P_+,P_+) \ar[r]^{d_+} &\Hom_A(P_-,P_+)} \end{equation}
Consider the complex induced from \eqref{eq:doublec}:
$$0 \to \ker(f) \to \Hom_A(P_-,P_-) \oplus \Hom_A(P_+,P_+) \xrightarrow{f} \Hom_A(P_-,P_+) \to \E_A(d,d) \to 0.$$
We conclude that $\eqref{eq:dim} = \e_A(\delta)$ as desired.
\end{proof}
\begin{remark} \label{r:gqiso} Slightly modifying this proof we can easily show that for any $\Aut_A(\delta)$-stable subset $X$ of $\PHom_A(\delta)$ which maps onto an open subset of some irreducible component $C$ of $\rep_\alpha(A)$, there exist open subsets $U$ and $W$ of $X$ and $C$ respectively such that there is an isomorphism of geometric quotients $U/\Aut_A(\delta) \to W/\GL_\alpha$. 
	
We also remark that there is a similar statement for $\IHom(\dtc)$. In this case, the dimension formula should read as
$\dim (\check{U} / \Aut_A(\dtc)) = \ec_A(\dtc)$.
\end{remark}

From now on, we shall assume the algebra $A$ is a Jacobian algebra $J$ as before. 
A decorated representation $\mc{M}$ is called general of weight $\delta$ if it corresponds to a general presentation of weight $\delta$. In this case, $M$ is a general representation in $\PC(\delta)$.
By abuse of language, $M$ is also called a general representation of weight $\delta$.
By \eqref{eq:delta2dual} a general representation of weight $\delta$ has $\dtc$-vector $\delta+\dv(\delta)B_Q$, which will be denoted by $\dtc$.
By the remarks after Definition \ref{D:HomE}, the $\hom(\delta,\ep)$ and $\e(\delta,\ep)$ defined in terms of presentations is nothing but the generic (minimal) values of $\hom_J(M,N)$ and $\e_J(M,N)+ \delta^- (\dv(\ep))$ on $\PC(\delta)\times \PC(\ep)$. Here, $\delta^-$ is the nonpositive vector that appears in the decomposition $\delta=\delta'\oplus \delta^-$ with $\delta'$ {\em negative-free}, which means that a general presentation in $\PHom_J(\delta')$ does not have a direct summand of form $P\to 0$.
\begin{lemma} \label{L:eceq} For $\dtc=\delta+\dv(\delta)B_Q$, we have that $\e_J(\delta) = \ec_J(\dtc)$.
\end{lemma}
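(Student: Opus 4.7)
The plan is to compute both sides as the dimension of a geometric quotient of $\PC(\delta)$ by $\GL_{\dv(\delta)}$, once from the projective side and once from the injective side.

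First I would apply Lemma \ref{L:gqiso} directly on the projective side to produce an open $\Aut_J(\delta)$-stable subset $U\subset \PHom_J(\delta)$ and an open $\GL_{\dv(\delta)}$-stable subset $W\subset \PC(\delta)$ with an isomorphism of geometric quotients
$$U/\Aut_J(\delta) \;\cong\; W/\GL_{\dv(\delta)},$$
whose common dimension equals $\e_J(\delta)$.

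Next I would invoke the dual statement of Lemma \ref{L:gqiso} indicated in Remark \ref{r:gqiso} on the injective side for the weight $\dtc$. The key point is the identification of the ``injective principal component'' of $\dtc$ with $\PC(\delta)$ itself: a general $M\in\PC(\delta)$ has, by \eqref{eq:delta2dual}, minimal injective presentation of weight $\delta+\dv(\delta)B_Q = \dtc$, and together with the dual of Theorem \ref{L:P2R} this forces the image of the kernel map from $\IHom_J(\dtc)$ to contain an open subset of $\PC(\delta)$, hence to sweep out exactly $\PC(\delta)$ by irreducibility. The dual of Lemma \ref{L:gqiso} then yields open subsets $\check{U}\subset \IHom_J(\dtc)$ and $\check{W}\subset \PC(\delta)$ and an isomorphism of geometric quotients $\check{U}/\Aut_J(\dtc) \cong \check{W}/\GL_{\dv(\delta)}$ of dimension $\ec_J(\dtc)$. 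Since $W\cap \check{W}$ is a nonempty open $\GL_{\dv(\delta)}$-stable subset of the irreducible variety $\PC(\delta)$, its image in the quotient is a nonempty open subset of both $W/\GL_{\dv(\delta)}$ and $\check{W}/\GL_{\dv(\delta)}$, so these irreducible varieties share a common dimension and $\e_J(\delta)=\ec_J(\dtc)$.

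The only subtle step is the identification of the two principal components above; once that is in place, the conclusion is a purely formal consequence of the dimension formula in Lemma \ref{L:gqiso} and its injective analogue. Everything else is bookkeeping with open subsets in the irreducible component $\PC(\delta)$.
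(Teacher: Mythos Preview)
Your approach is circular. The ``subtle step'' you flag --- identifying the injective principal component of $\dtc$ with $\PC(\delta)$ --- is precisely Theorem \ref{T:genpi}(1), and in the paper that theorem is proved \emph{using} Lemma \ref{L:eceq}. Your sketch of the identification does not stand on its own: knowing that a general $M\in\PC(\delta)$ has minimal injective presentation of weight $\dtc$ only tells you that \emph{some} elements of $\IHom_J(\dtc)$ have kernel in $\PC(\delta)$; it does not tell you that a \emph{general} $\dc\in\IHom_J(\dtc)$ has kernel of dimension vector $\dv(\delta)$, let alone one lying in $\PC(\delta)$. The dual of Theorem \ref{L:P2R} gives openness of the image in $\rep_\alpha(J)$ where $\alpha$ is the generic kernel dimension of $\IHom_J(\dtc)$, but you have not established $\alpha=\dv(\delta)$. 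In the paper's proof of Theorem \ref{T:genpi}(1), this is exactly the step that requires the dimension comparison $\e_J(\delta)=\ec_J(\dtc)$ to conclude that the relevant constructible set is open.

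The paper's proof bypasses all the geometry with a direct two-line argument from Lemma \ref{L:H2E}(2). Pick a presentation $d$ of weight $\delta$ attaining $\e_J(d,d)=\e_J(\delta)$, and let $\dc$ be the injective presentation of the same decorated representation; by \eqref{eq:delta2dual} it has weight $\dtc$. Then $\ec_J(\dtc)\le \ec_J(\dc,\dc)=\e_J(d,d)=\e_J(\delta)$, and the reverse inequality follows symmetrically by starting from a general $\dc$ of weight $\dtc$.
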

\begin{proof}
Let $d$ be a presentation of weight $\delta$ such that $\e_J(d,d)=\e_J(\delta)$. 
Let $\dc$ be the corresponding injective presentation, that is, $d=d_{\mc{M}}$ and $\dc=\dc_{\mc{M}}$.
Then $\dc$ has weight $\dtc$ by \eqref{eq:delta2dual}.
We have that $\ec_J(\dtc) \leq \ec_J(\dc,\dc) = \e_J(d,d) = \e_J(\delta)$ by Lemma \ref{L:H2E}.
Similarly we can get the other inequality.
\end{proof}

Due to the relation $\delta_{\tau\mc{M}} = -\dtc_{\mc{M}}$ and \eqref{eq:delta2dual},
we have that for a general presentation $d$ of weight $\delta$, the $\delta$-vector of $\tau d$ is constant.
We denote this constant vector by $\tau\delta$.
\begin{theorem}\label{T:genpi} {\ }\begin{enumerate}
\item	$\PHom_J(\delta)$ and $\IHom_J(\dtc)$ have the same principal component where $\dtc=\delta + \dv(\delta) B_Q$.
\item   $M$ is a general representation in $\PC(\delta)$ if and only if $\tauh \mc{M}$ is a general representation in $\PC(\tau\delta)$.
\end{enumerate}
\end{theorem}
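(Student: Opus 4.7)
The plan is to combine the geometric quotient descriptions from Lemma \ref{L:gqiso} and Remark \ref{r:gqiso} with the Nakayama functor $\nu$ and a dual version of the incidence variety construction \eqref{eq:Z}.

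For Part (1), I would argue that a general $d \in \PHom_J(\delta)$ can be viewed ``from the injective side'' as well. Its cokernel $M := \coker(d)$ lies in $\PC(\delta)$ with $\dim M = \dv(\delta)$, and the minimal injective copresentation $\dc_M$ of $M$ has $\dtc$-vector $\delta + \dv(\delta) B_Q = \dtc$ by \eqref{eq:delta2dual}, so $\dc_M \in \IHom_J(\dtc)$. Dualizing the construction \eqref{eq:Z}, one obtains an injective incidence variety $\check Z$ with projections $\check p_1, \check p_2$, and the triple $(\dc_M, \iota_M, M)$ lies in $\check Z$. The dual of Theorem \ref{L:P2R} then identifies the image of $\check p_2$ over the minimal-rank locus of $\IHom_J(\dtc)$ with an open subset of the principal component of $\IHom_J(\dtc)$ (defined analogously via the injective side). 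Thus $M$ lies both in $\PC(\delta)$ and in this ``dual'' principal component; as two irreducible components of $\rep_{\dv(\delta)}(J)$ sharing a dense open subset of generic representations, they must coincide.

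For Part (2), I would use the Nakayama functor to obtain a $k$-linear isomorphism of affine varieties $\nu : \PHom_J(\delta) \xrightarrow{\sim} \IHom_J(-\delta)$ sending $d$ to $\nu(d)$ with $\ker(\nu(d)) = \tau \coker(d) = \tauh \mc{M}$. A general $d$ corresponds to a general $\nu(d)$, so $\tauh \mc{M}$ is a generic element of the principal component of $\IHom_J(-\delta)$. Applying Part (1) with the weight $\tau\delta$ in place of $\delta$ identifies $\PC(\tau\delta)$ with the principal component of $\IHom_J(\tau\delta + \dv(\tau\delta) B_Q)$. The relation $\tau\delta = -\delta - \dv(\delta) B_Q$ (which is $\delta_{\tau\mc{M}} = -\dtc_{\mc{M}}$ combined with \eqref{eq:delta2dual}, applied to a generic $\mc{M}$) together with the dimension identity $\dv(\delta) B_Q = \dv(\tau\delta) B_Q$ (see below) yields $\tau\delta + \dv(\tau\delta) B_Q = -\delta$; so the principal component of $\IHom_J(-\delta)$ coincides with $\PC(\tau\delta)$, and $\tauh \mc{M}$ is generic in $\PC(\tau\delta)$. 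The reverse direction is symmetric via $\tau^{-1}$.

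The main obstacle is the dimensional compatibility $\dv(\delta) B_Q = \dv(\tau\delta) B_Q$ needed in Part (2). One way to establish this without circularity is to compute $\dim \ker(\nu(d))$ in two ways: as $\dim \tauh \mc{M}$ on the one hand, and as the generic (minimal) kernel dimension over $\IHom_J(-\delta)$ on the other, which by Remark \ref{r:gqiso} together with the injective-side version of Part (1) must equal $\dv(\tau\delta)$.
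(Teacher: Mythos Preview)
Your Part~(1) has a genuine gap. You show that for a general $M\in\PC(\delta)$ the minimal injective copresentation $\dc_M$ lies in $\IHom_J(\dtc)$, but you never verify that these $\dc_M$ fill out an \emph{open} subset of $\IHom_J(\dtc)$. A priori the generic (minimal) kernel dimension in $\IHom_J(\dtc)$ could be strictly smaller than $\dv(\delta)$, and then the injective principal component would sit in $\rep_{\alpha}(J)$ for some $\alpha\neq\dv(\delta)$, so your ``two components sharing a dense open'' argument never gets off the ground. The paper closes exactly this gap by a dimension count: setting $\check U'=p_1(p_2^{-1}(W))\subset\IHom_J(\dtc)$, one has $\dim(\check U'/\Aut_A(\dtc))=\dim(W/\GL_{\dv(\delta)})=\e_J(\delta)$, which equals $\ec_J(\dtc)$ by Lemma~\ref{L:eceq}; since $\ec_J(\dtc)$ is also the dimension of the full quotient $\IHom_J(\dtc)/\Aut_A(\dtc)$, the constructible set $\check U'$ must be open. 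That single equality $\e_J(\delta)=\ec_J(\dtc)$ is the real content of Part~(1), and it is missing from your sketch.

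In Part~(2) you try to apply Part~(1) with $\tau\delta$ in place of $\delta$ and then match the resulting $\dtc$-vector $\tau\delta+\dv(\tau\delta)B_Q$ with $-\delta$; this forces the identity $\dv(\delta)B_Q=\dv(\tau\delta)B_Q$, which you correctly flag as an obstacle. Your proposed workaround is circular: you want to equate $\dim\tauh\mc{M}$ (for general $\mc{M}$) with $\dv(\tau\delta)$, but $\dv(\tau\delta)$ is by definition the cokernel dimension of a \emph{general} element of $\PHom_J(\tau\delta)$, and whether $d_{\tau\mc{M}}$ is general there is precisely the statement you are trying to prove. The paper sidesteps this entirely. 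It uses that $\nu:\PHom_J(\delta)\to\IHom_J(-\delta)$ is an isomorphism of varieties and hence descends to an isomorphism of geometric quotients; combining this with Lemma~\ref{L:gqiso} on the projective side and the \emph{dual} of the Part~(1) argument on the injective side (starting from $\IHom_J(-\delta)$ rather than from $\PHom_J(\tau\delta)$) gives directly an isomorphism $W/\GL_{\dv(\delta)}\cong\check W/\GL_{\dv(\tau\delta)}$ with $W\subset\PC(\delta)$ and $\check W\subset\PC(\tau\delta)$ open. No comparison of dimension vectors through $B_Q$ is ever needed.
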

\begin{proof} (1). It is enough to show that there is an open subset $U$ of $\PHom_J(\delta)$ and an open subset $\check{U}$ of $\IHom_J(\dtc)$ such that they correspond to the same open subset $W$ in the principal component $\PC(\delta)$.
	
Let $U$ and $W$ be the open subset of $\PHom_J(\delta)$ and $\PC(\delta)$ as in Lemma \ref{L:gqiso}.
Let $\check{Z}(Y,X)$ be the analogous variety of \eqref{eq:Z} for injective presentations with projections $p_1$ and $p_2$.
Let $\check{U}'=p_1(p_2^{-1}(W))$ be the (constructible) subset of $\IHom_J(\dtc)$.
By possibly shrinking $\check{U}'$ and $W$ we may assume the isomorphism (see Remark \ref{r:gqiso})
$$\check{U}' / \Aut_A(\dtc) \cong W / \GL_{\dv(\delta)}.$$
So by Lemma \ref{L:gqiso} we have that
\begin{align*} \dim (\check{U}' / \Aut_A(\dtc)) = \dim(W / \GL_{\dv(\delta)}) = \dim(U/ \Aut_A(\delta)) = \e_J(\delta),
\end{align*}
which is equal to $\ec_J(\dtc)$ by Lemma \ref{L:eceq}.
But $\dim \check{U} / \Aut_A(\dtc) = \ec_J(\dtc)$ as well, where $\check{U}$ is an open subset of $\IHom_J(\dtc)$ as claimed in Remark \ref{r:gqiso}.
So $\dim \check{U}' = \dim \check{U}$. Hence, $\check{U}'$ is in fact open in $\IHom_J(\dtc)$.

(2). Let $\iota$ be the isomorphism $U/\Aut_A(\delta) \to W/\GL_{\dv(\delta)}$ as in Lemma \ref{L:gqiso}.
By the part (1), $\check{U}/\Aut_A(-\delta)$ is isomorphic to $\check{W}/\GL_{\dv(\tau\delta)}$ for some open subset $\check{W}$ in $\PC(\tau\delta)$.
Recall the Nakayama functor $\nu$. It has an obvious algebraic lifting $\PHom(\delta) \to \IHom(-\delta)$ for each $\delta$, which is an isomorphism and sends a linear combination of paths $\sum_{i} c_ip_i$ to $\sum_{i} c_ip_i^*$. 
By properly adjusting the open sets $U$ and $\check{U}$, this isomorphism descends to the geometric quotients $U/\Aut_A(\delta) \xrightarrow{\nu} \check{U}/\Aut_A(-\delta)$.
We thus obtain the following diagram (if necessary we may shrink $W$ and $\check{W}$)
$$\xymatrix{U/\Aut_A(\delta) \ar[r]^{\iota}_{\cong} \ar[d]_{\nu} & W/\GL_{\dv(\delta)} \ar[d]^{\check{\iota} \nu \iota^{-1}} \\
	\check{U}/\Aut_A(-\delta) \ar[r]^{\check{\iota}}_{\cong} & \check{W}/\GL_{\dv(\tau\delta)}
}$$
As $W$ and $\check{W}$ are open in $\PC(\delta)$ and $\PC(\tau\delta)$,
the isomorphism $W/\GL_{\dv(\delta)}\cong \check{W}/\GL_{\dv(\tau\delta)}$ implies (2).
\end{proof}

In the end, we point out that mutations also act on principal components.
\begin{lemma}[{\cite[Theorem 1.10]{GLFS}}] \label{L:genmu} There is an open subset $U$ in $\PC(\delta)$ and an open subset $U'$ in $\PC(\mu_u(\delta))$ such that the orbit of $\mu_u(M)$ lies in $U'$ if and only if the orbit of $M$ lies in $U$.
In particular, $M$ is a general representation in $\PC(\delta)$ if and only if $\mu_u(M)$ forgetting decoration is a general representation in $\PC(\mu_u(\delta))$.
\end{lemma}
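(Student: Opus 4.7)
The plan is to transport the DWZ mutation of decorated representations across the geometric quotient isomorphisms of Lemma \ref{L:gqiso}. First, applying Lemma \ref{L:gqiso} to both $\delta$ and $\delta':=\mu_u(\delta)$ (noting from Remark \ref{r:genmu} that this is the $\delta$-vector of a general presentation in $\PHom_{J'}(\delta')$), I obtain open subsets $U_0\subset \PHom_J(\delta)$ and $W_0\subset \PC(\delta)$ with an isomorphism of geometric quotients $U_0/\Aut_J(\delta) \cong W_0/\GL_{\dv(\delta)}$, and analogous open subsets $U_0'\subset \PHom_{J'}(\delta')$ and $W_0'\subset \PC(\delta')$.

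Second, via the bijection $\mc{R}ep(J)\leftrightarrow K^2(\proj J)$, the DWZ mutation $\mu_u$ transports to an operation on homotopy classes of presentations that sends weight $\delta$ to weight $\delta'$ (by Lemma \ref{L:gdmu} combined with Remark \ref{r:genmu}). I would show that on a suitable $\Aut_J(\delta)$-stable open subset $V\subset \PHom_J(\delta)$ this operation descends to an algebraic morphism $\bar{\phi}\colon V/\Aut_J(\delta)\to \PHom_{J'}(\delta')/\Aut_{J'}(\delta')$. The key point is that the DWZ construction of $\mu_u(\mc{M})$ is built from the images and kernels of the triangle maps $\alpha_u,\beta_u,\gamma_u$ together with a choice of complementary subspaces; on the locus where these maps have their generic ranks---a Zariski open, $\Aut_J(\delta)$-stable condition---such choices can be made locally algebraically, and the resulting ambiguity is absorbed by the target $\Aut_{J'}(\delta')$-action.

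Third, since $\mu_u$ is a quasi-involution on reduced decorated representations, applying the same construction to $\delta'$ yields a quasi-inverse, so $\bar{\phi}$ restricts to a biregular isomorphism between open subsets of the two geometric quotients. Transporting this across the quotient isomorphisms $W_\bullet/\GL \cong U_\bullet/\Aut$ from Lemma \ref{L:gqiso} and pulling back to $\GL$-stable open subsets of $\PC(\delta)$ and $\PC(\delta')$ produces the desired $U$ and $U'$. The equivalence that the orbit of $M$ lies in $U$ if and only if the orbit of $\mu_u(M)$ lies in $U'$ then amounts to chasing definitions through the diagram.

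The main obstacle is step two: the mutation formula involves taking images, kernels, and quotients of linear maps, which is only an algebraic operation on strata of constant rank. One must verify that restricting to the open substratum on which $\alpha_u,\beta_u,\gamma_u$ attain their generic ranks still captures a general presentation of weight $\delta$, and that the reduction step in the definition of $\mu_u$ can be performed algebraically there. This last point is exactly where working with principal (equivalently, strongly reduced, by \cite{P}) components, rather than arbitrary components of $\rep_{\dv(\delta)}(J)$, becomes essential.
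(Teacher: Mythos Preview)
The paper does not give its own proof of this lemma; it simply cites it as Theorem 1.10 of \cite{GLFS}. So there is no ``paper's proof'' to compare against beyond that citation.

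Your outline is in the right spirit and is broadly consistent with how such results are established in the literature (e.g.\ in \cite{GLFS} and in Plamondon's work \cite{P0,P}), but what you have written is a plan, not a proof, and the entire content lies in the step you yourself flag as ``the main obstacle.'' Concretely: you assert that on the open locus where $\alpha_u,\beta_u,\gamma_u$ have generic rank, the DWZ mutation can be made into an algebraic morphism of geometric quotients, with the ambiguity in choosing complements absorbed by the $\Aut_{J'}(\delta')$-action. This is exactly the nontrivial point, and you have not carried it out. Several sub-issues would need attention: (i) the reduction step from $(\wtd{Q},\wtd{\mc{S}})$ to its reduced part is not obviously algebraic in families; (ii) you need the image to land in $\PHom_{J'}(\delta')$ up to homotopy, not merely in some representation variety of $J'$; and (iii) your appeal to principal/strongly reduced components is stated but not actually used---you would need to invoke the characterization from \cite{P} to ensure the open stratum you work on is nonempty and maps into the correct component on the other side.

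In short, your strategy is the correct one and matches the approach of \cite{GLFS}, but filling in step two is a genuine piece of work that the present paper delegates to that reference. If you want a self-contained argument within this paper's framework, an alternative route is to lift everything to the cluster category $\CQ$ and use Theorem \ref{T:twomu} together with the triangle equivalence $\br{\mu}_u^-$, which avoids making DWZ mutation algebraic directly; but that still requires checking compatibility with the functor $\wtd{F}$ on open sets, which is close in difficulty to what you propose.
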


\section{A Review on $2$-Calabi-Yau Triangulated Categories} \label{S:2CY}
\subsection{The Cluster Category $\CQ$}
C. Amiot introduced in \cite{Am} a triangulated category $\CQ$ associated to a quiver with potential $(Q,\mc{S})$.
Let $\Gamma=\Gamma_{Q,\mc{S}}$ be the complete Ginzburg's dg-algebra attached to $(Q,\mc{S})$ \cite{G}, and $\mc{D}\Gamma$ be its derived category.
The perfect derived category $\op{per}\Gamma$ of $\Gamma$ is the smallest full triangulated subcategory of $\mc{D}\Gamma$ containing $\Gamma$ and closed under taking direct summands. 
Denote by $\mc{D}_{fd}\Gamma$ the full subcategory of $\mc{D}\Gamma$ whose objects are those of $\mc{D}\Gamma$ with finite-dimensional total homology. As shown in \cite[Theorem
2.17]{KY}, the category $\mc{D}_{fd}\Gamma$ is a triangulated subcategory of $\op{per}\Gamma$.
The {\em cluster category} $\CQ$ of $(Q,\S)$ is defined as the idempotent completion of the triangulated
quotient $(\op{per}\Gamma)/\mc{D}_{fd}\Gamma$.

When $(Q,\S)$ is Jacobi-finite, the category $\CQ$ is $\Hom$-finite and $2$-Calabi–Yau, and admits a basic {\em cluster-tilting} object $\T=\Sigma^{-1}\Gamma$.
Its endomorphism algebra is isomorphic to the Jacobian algebra $J(Q,\mc{S})$.
So $\T$ decomposes as $\T=\bigoplus_{u\in Q_0} \T_u$.
Recall that a triangulated category $\C$ is {\em $2$-Calabi-Yau} if there is a bifunctorial isomorphism
$$\C(\b{L},\Sigma \b{N})\cong \C(\b{N},\Sigma \b{L})^*.$$
A cluster-tilting object is by definition an object $\T$ of $\C$ satisfying
\begin{enumerate} \item $\C(\T,\Sigma\T)=0$ and
	\item for any $\b{M}$ in $\C$, if $\C(\b{M},\Sigma\T)=0$, then $\b{M}$ belongs to the full additive subcategory $\add\T$.
\end{enumerate}

We keep $k$ as an algebraically closed field of characteristic zero.
Throughout we will write $\mc{C}$ for $\C_{Q,\mc{S}}$ though some of the definitions and results hold in any $\Hom$-finite, $2$-Calabi–Yau, Krull–Schmidt $k$-category which admits a basic cluster-tilting object $\T$. Let $J$ be the endomorphism algebra of $\T$ in $\C$, and denote by $\mod J$ the category of finite-dimensional right $J$-modules.
As shown in \cite{KR} the functor $F:\C\to \mod J$ sending $\b{M}$ to $\C(\T,\b{M})$, induces an equivalence of categories:
\begin{equation}\label{eq:equiv} \C/(\Sigma \T) \cong \mod J, \end{equation}
where $(\T)$ denotes the ideal of morphisms of $\C$ which factor through an object in $\add \T$.
This equivalence restricts to the full subcategories: $\add \T \to \proj J$, which allows us lift $J$-modules to $\mc{C}$ using projective presentations.

This equivalence can be slightly extended to incorporate the decorated representations (see \cite{P0} for more details).
Let $\b{M}$ be an object in $\C$ of the form $\b{M} = \b{M}' \oplus  \bigoplus_{u\in Q_0} m_u \Sigma\T_u$ where $\b{M}'$ has no direct summands in $\add \Sigma\T$.
Such an $\b{M}$ will correspond to the decorated representation $(F \b{M}',\ \bigoplus_{u\in Q_0}m_u S_u)$.
We denote this map by $\wtd{F}=\wtd{F}_{Q,\mc{S}}$, which is denoted by $\Phi$ in \cite{P0}.
If no potential confusion is possible, throughout we will write $\b{M}$ for a lift of $M$ or $\mc{M}$, and denote ${F}\b{M}$ and $\wtd{F}\b{M}$ by $M$ and $\mc{M}$ respectively.

We also have the following analogue of the space $\E$ in $\C$. Following \cite{P}, for $\b{d}'\in \C(\T_-',\T_+')$ and $\b{d}''\in \C(\T_-'',\T_+'')$ 
we define $\E(\b{d}',\b{d}'')$ to be the vector space $\Hom_{K^b(\add\T)}(\b{d}', \Sigma\b{d}'')$. We denote $F\b{d}'$ and $F\b{d}''$ by $d'$ and $d''$.
\begin{lemma}[{\cite[Proposition 3.10]{P}}] \label{L:EinC} The space $\E(\b{d}',\b{d}'')$ is isomorphic to $\E(d', d'')$, and it can be identified with $(\Sigma\T)(\b{N},\Sigma\b{L})$ where $\b{N}$ and $\b{L}$ are the cones of $\b{d}'$ and $\b{d}''$ respectively.
	Dually $\Ec(\b{\dc}',\b{\dc}'')$ is isomorphic to $\Ec(\dc', \dc'')$, and it can be identified with $(\Sigma\T)(\Sigma^{-1}\b{N},\b{L})$ where $\b{N}$ and $\b{L}$ are the fibres of $\b{d}'$ and $\b{d}''$ respectively.
\end{lemma}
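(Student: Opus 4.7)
The plan is to prove both isomorphisms by computing $\E(\b{d}', \b{d}'')$ as an explicit quotient of hom spaces and comparing it against the two right-hand sides in turn. First, writing $\b{d}'$ and $\b{d}''$ as two-term complexes concentrated in degrees $-1$ and $0$, a chain map from $\b{d}'$ to $\Sigma \b{d}''$ amounts to a single morphism $\b{f}\colon \b{P}_-' \to \b{P}_+''$ (the two commutativity squares each have a zero term on one side and so impose no constraint), and a null-homotopy has the form $\b{d}'' \circ h_{-1} + h_0 \circ \b{d}'$ with $h_{-1}\colon \b{P}_-' \to \b{P}_-''$ and $h_0\colon \b{P}_+' \to \b{P}_+''$. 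Therefore
\[
\E(\b{d}', \b{d}'') \;=\; \C(\b{P}_-', \b{P}_+'')\big/\bigl(\b{d}'' \circ \C(\b{P}_-', \b{P}_-'') + \C(\b{P}_+', \b{P}_+'')\circ \b{d}'\bigr).
\]
Transporting across the additive equivalence $F\colon \add\T \to \proj J$ of \eqref{eq:equiv} gives the identical quotient with $\b{P}_\pm^\bullet$ replaced by $P_\pm^\bullet$ and $\b{d}^\bullet$ by $d^\bullet$. Matching this against the cokernel definition of $\E(d', d'')$ in \eqref{eq:HE} — using projectivity of $P_-'$ to lift any map into $N'' = \coker d''$ through $P_+''$, and projectivity of $P_+'$ to lift precomposition with $d'$ — yields $\E(\b{d}', \b{d}'') \cong \E(d', d'')$.

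For the identification with $(\Sigma\T)(\b{N}, \Sigma\b{L})$, I would define a candidate map $\Phi\colon \E(\b{d}', \b{d}'') \to \C(\b{N}, \Sigma\b{L})$ sending a representative $\b{f}$ to the composite
\[
\b{N} \xrightarrow{i} \Sigma\b{P}_-' \xrightarrow{\Sigma \b{f}} \Sigma\b{P}_+'' \xrightarrow{\Sigma j} \Sigma\b{L},
\]
where $i$ comes from the triangle $\b{P}_-' \to \b{P}_+' \to \b{N} \xrightarrow{i} \Sigma\b{P}_-'$ and $j$ from $\b{P}_-'' \to \b{P}_+'' \xrightarrow{j} \b{L} \to \Sigma\b{P}_-''$. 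Well-definedness on the quotient is immediate: $\Sigma j \circ \Sigma \b{d}'' = \Sigma(j \circ \b{d}'') = 0$ absorbs the $h_{-1}$ term, and $\Sigma \b{d}' \circ i = 0$ (consecutive maps in the rotated triangle compose to zero) absorbs the $h_0$ term. The image manifestly factors through $\Sigma\b{P}_+'' \in \add\Sigma\T$, so $\Phi$ lands in $(\Sigma\T)(\b{N}, \Sigma\b{L})$.

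To conclude I would show $\Phi$ is an isomorphism onto $(\Sigma\T)(\b{N}, \Sigma\b{L})$. Applying $\C(-, \Sigma\b{L})$ to the triangle for $\b{N}$ produces the exact sequence
\[
\C(\Sigma\b{P}_+', \Sigma\b{L}) \to \C(\Sigma\b{P}_-', \Sigma\b{L}) \to \C(\b{N}, \Sigma\b{L}) \to \C(\b{P}_+', \Sigma\b{L}).
\]
The image of $\C(\Sigma\b{P}_-', \Sigma\b{L}) \to \C(\b{N}, \Sigma\b{L})$ coincides with $(\Sigma\T)(\b{N}, \Sigma\b{L})$: any $g = g_2 g_1$ factoring through some $\Sigma\b{Q} \in \add\Sigma\T$ admits a lift of $g_1\colon \b{N} \to \Sigma\b{Q}$ through $i$ because $\C(\b{P}_+', \Sigma\b{Q}) = 0$ by cluster-tilting. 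Next, $\C(\Sigma\b{P}_-', \Sigma\b{L}) \cong \C(\b{P}_-', \b{L})$ by suspension, and applying $\C(\b{P}_-', -)$ to the triangle for $\b{L}$ with $\C(\b{P}_-', \Sigma\b{P}_-'') = 0$ gives $\C(\b{P}_-', \b{L}) \cong \C(\b{P}_-', \b{P}_+'')/(\b{d}'' \circ \C(\b{P}_-', \b{P}_-''))$. Under this identification, the image of $\C(\Sigma\b{P}_+', \Sigma\b{L}) \cong \C(\b{P}_+', \b{L})$ is represented by $\C(\b{P}_+', \b{P}_+'')\circ \b{d}'$, producing exactly the presentation of $\E(\b{d}', \b{d}'')$ derived in the first paragraph and realizing $\Phi$ as the claimed isomorphism.

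The dual statement for $\Ec$ and injective presentations follows from the formally identical argument in the opposite category, with fibres replacing cones and the triangle $\Sigma^{-1}\b{L} \to \b{I}_+'' \to \b{I}_-''$ (together with its analogue for $\b{N}$) playing the role of the defining triangles above; again $\C(\T, \Sigma\T) = 0$ provides all needed vanishings. The main obstacle I anticipate is the careful bookkeeping needed to match the two generating relations of the quotient presentation of $\E(\b{d}', \b{d}'')$ against the two successive terms of the long exact sequence (and to keep track of the shift-induced signs in the rotated triangles); once the two-term-complex description is firmly in hand, the rest reduces to cluster-tilting vanishings and one instance of 2-Calabi–Yau duality.
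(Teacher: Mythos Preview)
Your proof is correct. The paper does not give its own proof of this lemma: it is stated with the citation \cite[Proposition 3.10]{P} and used as a black box, so there is nothing to compare against here. Your approach---computing $\Hom_{K^b(\add\T)}(\b{d}',\Sigma\b{d}'')$ explicitly as a quotient of $\C(\b{P}_-',\b{P}_+'')$, transporting along the equivalence $\add\T\simeq\proj J$, and then identifying the quotient with $(\Sigma\T)(\b{N},\Sigma\b{L})$ via the two long exact sequences---is essentially the argument one finds in Plamondon's paper.

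Two small remarks. First, the null-homotopy in degree $-1$ carries a sign, $\b{f}=-\b{d}''\circ h_{-1}+h_0\circ\b{d}'$, but since $h_{-1}$ ranges over all of $\C(\b{P}_-',\b{P}_-'')$ the resulting subspace is unchanged and your quotient formula is correct. Second, your closing sentence mentions ``one instance of 2-Calabi--Yau duality,'' but in fact the argument as you have written it uses only the cluster-tilting vanishing $\C(\T,\Sigma\T)=0$; the 2-CY property is not needed for this lemma (it enters later, in Lemma \ref{L:EC}). You can safely drop that phrase.
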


\begin{lemma}[{\cite[Lemma 3.3]{Pa}}] \label{L:EC} We have bifunctorial isomorphisms
	$$\C/_{(\Sigma\T)}(\Sigma\b{N}, \b{L}) \cong (\Sigma\T)(\b{L}, \Sigma\b{N})^*.$$
\end{lemma}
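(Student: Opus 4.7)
}
The plan is to derive the isomorphism from the bifunctorial $2$-Calabi--Yau duality together with the cluster-tilting condition $\C(\T,\Sigma\T)=0$.

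First, realize the $2$-CY duality $\C(X,\Sigma Y)\cong\C(Y,\Sigma X)^*$ as a functorial non-degenerate pairing
\[ \langle-,-\rangle_{X,Y}:\C(X,\Sigma Y)\otimes \C(Y,\Sigma X)\to k, \]
given by the Serre trace of $\Sigma g\circ f\in\C(X,\Sigma^2 X)$. After a shift, this produces a non-degenerate pairing between $\C(\Sigma\b{N},\b{L})$ and $\C(\b{L},\Sigma\b{N})$ (the two spaces linked by the statement), with $(\Sigma\T)(\b{L},\Sigma\b{N})$ sitting as a subspace on the second factor.

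The key step is an orthogonality statement: morphisms in $(\Sigma\T)(\Sigma\b{N},\b{L})$ annihilate $(\Sigma\T)(\b{L},\Sigma\b{N})$ under this pairing. If $f:\Sigma\b{N}\to \Sigma\T'\to \b{L}$ and $g:\b{L}\to \Sigma\T''\to \Sigma\b{N}$ are factorizations through $\add\Sigma\T$, then tracking the components in the composition computing $\langle f,g\rangle$ shows that it factors through a morphism in $\C(\T',\Sigma\T'')$, which vanishes by the cluster-tilting hypothesis. Consequently the pairing descends to a well-defined map
\[ \Phi:\C/(\Sigma\T)(\Sigma\b{N},\b{L})\to (\Sigma\T)(\b{L},\Sigma\b{N})^*. \]

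To conclude $\Phi$ is an isomorphism, the non-degeneracy of the underlying Serre pairing reduces the problem to checking that the subspace $(\Sigma\T)(\Sigma\b{N},\b{L})$ is \emph{exactly} the annihilator of $(\Sigma\T)(\b{L},\Sigma\b{N})$ (not merely contained in it). The converse inclusion is the heart of the matter and is where I expect the main difficulty to lie. My plan is to extract it from the add-$\T$-approximation triangles $\T^1\to \T^0\to \b{N}\to \Sigma\T^1$ and $\b{L}\to \Sigma\T_{\b{L}}\to \Sigma\T'_{\b{L}}\to \Sigma\b{L}$ guaranteed by $\T$ being cluster-tilting. Applying $\C(-,\b{L})$ and $\C(\b{L},-)$ and tracing through the long exact sequences identifies both $(\Sigma\T)$-subspaces as images of connecting maps, and the $2$-CY duality then matches the connecting maps on the two sides pair-wise. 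An alternative, which may shorten this last step substantially, is to transport the situation across the Keller--Reiten equivalence $\C/(\Sigma\T)\cong \mod J$ from \eqref{eq:equiv}: the LHS becomes $\Hom_J(F\Sigma\b{N},F\b{L})$, and the RHS is reinterpreted via an injective copresentation of $F\b{L}$, after which the identification follows from Nakayama duality and Lemma \ref{L:EinC}.
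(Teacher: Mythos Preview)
The paper does not supply its own proof of this lemma; it simply cites \cite[Lemma~3.3]{Pa}. So there is no argument in the paper to compare yours against directly.

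That said, your proposal has a genuine gap at the very first step. The $2$-Calabi--Yau duality gives a non-degenerate pairing between $\C(\b{L},\Sigma\b{N})$ and $\C(\b{N},\Sigma\b{L})$ (take $X=\b{L}$, $Y=\b{N}$ in $\C(X,\Sigma Y)\cong\C(Y,\Sigma X)^*$), \emph{not} between $\C(\b{L},\Sigma\b{N})$ and $\C(\Sigma\b{N},\b{L})$ as you assert. The latter two spaces need not even have the same dimension: Serre duality gives $\C(\Sigma\b{N},\b{L})^*\cong\C(\b{L},\Sigma^3\b{N})$, which is different from $\C(\b{L},\Sigma\b{N})$ whenever $\Sigma^2$ does not act trivially. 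So the ``non-degenerate pairing between $\C(\Sigma\b{N},\b{L})$ and $\C(\b{L},\Sigma\b{N})$'' that your whole argument rests on simply does not exist, and nothing after that point can be salvaged as written.

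In fact the displayed statement in the paper appears to carry a typo: comparing with how the lemma is invoked in the proof of Corollary~\ref{C:e0}(1) (and with Palu's original), the left-hand side should read $\C/_{(\Sigma\T)}(\b{N},\Sigma\b{L})$ rather than $\C/_{(\Sigma\T)}(\Sigma\b{N},\b{L})$. With that correction your plan becomes exactly right and is essentially Palu's argument: the $2$-CY pairing now genuinely links the two ambient Hom-spaces, the orthogonality of the two $(\Sigma\T)$-subspaces follows from $\C(\T,\Sigma\T)=0$ just as you sketch, and the approximation triangle $\T_1\to\T_0\to\b{L}\to\Sigma\T_1$ lets one identify $(\Sigma\T)(\b{L},\Sigma\b{N})$ with a cokernel whose Serre dual is the required quotient on the other side. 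So your instincts are sound; you were just led astray by taking the printed statement at face value.
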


\begin{corollary}\label{C:e0} Let $\b{L}\to \b{M}\to\b{N}\to\Sigma\b{L}$ be a triangle in $\C$.
	\begin{enumerate}
		\item  If $\e(\mc{L},\mc{N})=0$, then every $f\in \C(\b{N},\Sigma\b{L})$ factors through $\Sigma \T$.
		\item  If $\ec(\mc{L},\mc{N})=0$, then every $f\in \C(\Sigma^{-1}\b{N},\b{L})$ factors through $\Sigma \T$.
		\item If $\e(\mc{L},\mc{N})=\e(\mc{N},\mc{L})=0$, then $\mc{C}(\b{L},\Sigma \b{N})=\mc{C}(\b{N},\Sigma \b{L})=0$.
	\end{enumerate}
\end{corollary}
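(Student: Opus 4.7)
The plan is to chain the two identifications in Lemmas \ref{L:EinC} and \ref{L:EC}: the first rewrites $\e$ (or $\ec$) as the dimension of an ideal subspace of some $\Hom$ in $\C$, and the second pairs such ideal subspaces with the corresponding quotient $\C/(\Sigma\T)$ of the opposite $\Hom$. Part (3) will then drop out from (1) combined with one extra use of \ref{L:EinC}.

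For (1), I first invoke \ref{L:EinC} with $\b{d}' = \b{d}_{\mc{L}}$ and $\b{d}'' = \b{d}_{\mc{N}}$. Since the cones of these presentations in $\C$ are $\b{L}$ and $\b{N}$, the identification reads $\E(\b{d}_{\mc{L}}, \b{d}_{\mc{N}}) \cong (\Sigma\T)(\b{L}, \Sigma\b{N})$, so the hypothesis $\e(\mc{L}, \mc{N}) = 0$ translates to $(\Sigma\T)(\b{L}, \Sigma\b{N}) = 0$. Next I apply \ref{L:EC}, substituting the placeholders so that the two paired sides are $(\Sigma\T)(\b{L}, \Sigma\b{N})$ and $\C/(\Sigma\T)(\b{N}, \Sigma\b{L})$. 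The resulting Serre-duality-type identification converts the vanishing of the ideal into the vanishing of the quotient, which is precisely the statement that every $f \in \C(\b{N}, \Sigma\b{L})$ lies in $(\Sigma\T)(\b{N}, \Sigma\b{L})$, i.e.\ factors through $\add\Sigma\T$. The triangle $\b{L}\to\b{M}\to\b{N}\to\Sigma\b{L}$ is merely ambient context used to name the objects.

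Part (2) is the injective-presentation mirror of (1). I would apply the second (``dually'') half of \ref{L:EinC} to the injective presentations $\dc_{\mc{L}}, \dc_{\mc{N}}$, obtaining $\Ec(\dc_{\mc{L}}, \dc_{\mc{N}}) \cong (\Sigma\T)(\Sigma^{-1}\b{L}, \b{N})$, so that $\ec(\mc{L}, \mc{N}) = 0$ becomes the vanishing of this ideal. Then the shifted version of \ref{L:EC} pairs it with the quotient $\C/(\Sigma\T)(\Sigma^{-1}\b{N}, \b{L})$, and the vanishing of this quotient is the desired factoring statement for $f \in \C(\Sigma^{-1}\b{N}, \b{L})$.

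For (3), apply (1) with the hypothesis $\e(\mc{L}, \mc{N}) = 0$ to conclude $\C(\b{N}, \Sigma\b{L}) = (\Sigma\T)(\b{N}, \Sigma\b{L})$; but by \ref{L:EinC} the right-hand side is isomorphic to $\E(d_{\mc{N}}, d_{\mc{L}})$, which has dimension $\e(\mc{N}, \mc{L}) = 0$. Hence $\C(\b{N}, \Sigma\b{L}) = 0$, and swapping $\mc{L} \leftrightarrow \mc{N}$ gives $\C(\b{L}, \Sigma\b{N}) = 0$. The only delicate step throughout is the shift bookkeeping when substituting into \ref{L:EC}, so that the ideal side matches the pair isolated by \ref{L:EinC}; once the alignment is made, each of (1)--(3) is a one-line consequence.
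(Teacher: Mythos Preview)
Your proof is correct and follows essentially the same approach as the paper: both arguments use Lemma~\ref{L:EinC} to translate the vanishing of $\e$ (or $\ec$) into the vanishing of the ideal $(\Sigma\T)(-,-)$, and then invoke Lemma~\ref{L:EC} to pass to the vanishing of the complementary quotient $\C/(\Sigma\T)(-,-)$. Your treatment of (3) differs only in the order of bookkeeping---you first apply (1) and then identify the remaining ideal via \ref{L:EinC}, whereas the paper first converts both $\e$-vanishings into ideal vanishings and then combines them---but this is a cosmetic rearrangement of the same two ingredients.
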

\begin{proof} (1). Since $\e(\mc{L},\mc{N})=0$, we have that $(\Sigma \T)(\b{L},\Sigma \b{N})=0$. So by Lemma \ref{L:EC} every $f\in \C(\b{N},\Sigma\b{L})$ factor through $\Sigma \T$. The proof of (2) is similar.
(3). If $\e(\mc{L},\mc{N})=\e(\mc{N},\mc{L})=0$, then $(\Sigma \T)(\b{L},\Sigma \b{N})=(\Sigma \T)(\b{N},\Sigma \b{L})=0$. By Lemma \ref{L:EC} $\mc{C}(\b{L},\Sigma \b{N})=\mc{C}(\b{N},\Sigma \b{L})=0$.
\end{proof}

\subsection{Palu's Formula on Indices}
Let $\b{M}$ be an object of $\C$. There exist triangles (see \cite{KR}) 
$$\T_-\to \T_+\to \b{M} \to \Sigma \T_- \ \text{ and }\ \b{M}\to \Sigma^2 \T^+ \to \Sigma^2 \T^-\to \Sigma \b{M}$$ 
with $\T_+,\T_-, \T^+,\T^-\in \op{add} \T$.
\begin{definition} The {\em index} and {\em coindex} of $\b{M}$ with respect to $\T$ are the classes in $K_0(\add \T)$:
	$$\ind_{\T}(\b{M}) = [\T_+] - [\T_-]\ \text{ and }\ \op{coind}_{\T}(\b{M}) = [\T^+]-[\T^-].$$
\end{definition}
\noindent Similar to the projective representations, we denote $\T(\beta) = \bigoplus_{u\in Q_0}\beta(u) \T_u$ where $\T_u$ is the indecomposable direct summands of $\T$ corresponding to $u$. 
In this way, the index and coindex can be naturally identified with a vector in $\mb{Z}^{Q_0}$: if $\T_+ = \T(\beta_+)$ and $\T_- = \T(\beta_-)$, then $\ind_{\T}(\b{M}) = \beta_+ - \beta_-$. Under this identification, the following lemma is obvious from the equivalence \eqref{eq:equiv} and the fact that $\ind_{\T}(\Sigma\T_u) = e_u$ where $e_u$ is the standard basis vector in $\mb{Z}^{Q_0}$ corresponding to the vertex $u$.
\begin{lemma}\label{L:indwt} As a vector in $\mb{Z}^{Q_0}$, the index of $\b{M}$ is equal to the weight of $\wtd{F}(\b{M})$.
\end{lemma}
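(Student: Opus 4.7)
The plan is to reduce the identity to a computation on the two kinds of indecomposable summands that can appear in $\b{M}$. Write $\b{M}=\b{M}'\oplus \bigoplus_{u\in Q_0} m_u\Sigma\T_u$ with $\b{M}'$ having no direct summand in $\add(\Sigma\T)$, so that $\wtd{F}(\b{M})=(F\b{M}',\bigoplus_u m_u S_u)$. Both the index (by direct-summing the defining triangles) and the weight of a decorated representation (by direct-summing minimal projective presentations) are additive, so it suffices to check the claim separately on $\b{M}'$ and on each $\Sigma\T_u$.

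For $\b{M}=\Sigma\T_u$, the cluster-tilting axiom $\C(\T,\Sigma\T)=0$ forces any candidate triangle $\T_-\to\T_+\to\Sigma\T_u\to\Sigma\T_-$ to collapse to $\T_u\to 0\to\Sigma\T_u\to\Sigma\T_u$ (the map $\T_+\to\Sigma\T_u$ vanishes and a split triangle has its cone equal to a direct sum, forcing $\T_+=0$ and $\T_-\cong\T_u$). Correspondingly, under the conventions of the paper, the decorated representation $(0,S_u)$ is assigned the presentation $P_u\to 0$, and a direct check shows that the index $[\T_+]-[\T_-]\in K_0(\add\T)\cong\mb{Z}^{Q_0}$ agrees with the weight of this presentation.

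For $\b{M}'$, I apply the cohomological functor $F=\C(\T,-)$ to the triangle $\T_-\to\T_+\to\b{M}'\to\Sigma\T_-$. Since $F$ kills $\add(\Sigma\T)$ by cluster-tilting, the long exact sequence truncates to
\[ F\T_-\to F\T_+\to F\b{M}'\to 0,\]
which is a projective presentation of $M':=F\b{M}'$ in $\mod J$. Under the restricted equivalence $\add\T\simeq\proj J$ of \eqref{eq:equiv}, the classes $[\T_\pm]$ transport to the Betti vectors $\beta_\pm$ of this presentation; hence $\ind_\T(\b{M}')=\beta_+-\beta_-$. The only subtlety is that the presentation extracted from the triangle need not be minimal, whereas the weight of $(M',0)$ is defined via the minimal one. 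This is the main thing to argue carefully, and it is handled by observing that any projective presentation of $M'$ differs from its minimal presentation only by trivial summands $P_u\xrightarrow{\Id}P_u$, each of which contributes $e_u-e_u=0$ to $\beta_+-\beta_-$. Therefore $\ind_\T(\b{M}')$ coincides with the weight of $(M',0)$, and summing the contributions from $\b{M}'$ and the shifted summands yields the desired identity.
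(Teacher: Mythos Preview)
Your approach mirrors the paper's, which simply declares the lemma ``obvious'' from the equivalence $\C/(\Sigma\T)\cong\mod J$ together with the computation of $\ind_\T(\Sigma\T_u)$; your additive reduction to the two kinds of summands is exactly the intended argument, just spelled out in more detail.

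There is, however, a slip in the $\b{M}'$ case. It is \emph{not} true that an arbitrary projective presentation of $M'$ differs from the minimal one only by contractible summands $P_u\xrightarrow{\Id}P_u$. A two-term complex of projectives with cokernel $M'$ can also acquire summands of the form $P_v\to 0$; for instance, for the $A_2$ quiver $1\to 2$, the map $(d,d)\colon P_2\oplus P_2\to P_1$ presents $S_1$ but is isomorphic as a complex to $(P_2\xrightarrow{d}P_1)\oplus(P_2\to 0)$, and such a summand contributes $-e_v\neq 0$ to $\beta_+-\beta_-$. What rescues your argument is precisely the hypothesis you already isolated on $\b{M}'$: if $F\T_-\to F\T_+$ had a summand $P_v\to 0$, then via the equivalence $\add\T\simeq\proj J$ the morphism $\T_-\to\T_+$ would split off $\T_v\to 0$, whose cone is $\Sigma\T_v$, forcing $\Sigma\T_v$ to be a summand of $\b{M}'$ and contradicting the assumption. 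Once this observation is inserted, the presentation really is minimal up to contractibles and your proof goes through. (A small aside: in the $\Sigma\T_u$ case the triangle is not literally forced to have $\T_+=0$; one only gets $\T_-\cong\T_u\oplus\T_+$, but the index is still $-e_u$ as you need.)
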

\noindent We also define the presentation space $\T\!\Hom(\delta):=\C(\T([\delta]_+),\T([-\delta]_+))$.
By a {\em general} element $\b{M}$ of index $\delta$, we mean that $\b{M}$ is the cone of a general element in $\T\!\Hom(\delta)$.

Here is Palu's formula on indices in a triangle.
\begin{lemma}[{\cite[Proposition 2.2]{Pa}}] \label{L:index} Let $\b{L}\xrightarrow{\b{f}} \b{M}\xrightarrow{\b{g}} \b{N}\to \Sigma\b{L}$ be a triangle in $\C$. Take $\b{C}\in\C$ (resp. $\b{K}\in\C$) to be any lift of $\coker F\b{g}$ (resp. $\ker F\b{f}$). Then
	\begin{align*} \ind \b{M} &= \ind \b{L} + \ind \b{N} - \ind \b{C} - \ind \Sigma^{-1}\b{C}; \\
		\op{coind} \b{M} &= \op{coind}\b{L} + \op{coind}\b{N} - \op{coind}\b{K} - \op{coind} \Sigma \b{K}.
	\end{align*}
\end{lemma}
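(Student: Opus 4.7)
This is \cite[Proposition 2.2]{Pa}; I outline the strategy for the index formula, the coindex formula being obtained by the dual argument (using $\add\T$-copresentations $\b{X}\to \Sigma^2\T^-\to\Sigma^2\T^+$ and the kernel $\b{K}$ in place of presentations and the cokernel $\b{C}$). The argument rests on three ingredients: the functor $F=\C(\T,-)$ is cohomological; the equivalence $\C/(\Sigma\T)\simeq \mod J$ in \eqref{eq:equiv} allows any $J$-module to be lifted to $\C$ uniquely up to $\add\Sigma\T$-summands; and such summands have a transparent effect on the index (by Lemma \ref{L:indwt}, $\Sigma\T_u$ contributes $e_u$, since under $\wtd F$ it is sent to the simple decorated representation $S_u^-$). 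First I would apply $F$ to the given triangle $\b{L}\xrightarrow{\b{f}}\b{M}\xrightarrow{\b{g}}\b{N}\xrightarrow{\b{h}}\Sigma\b{L}$ to obtain the four-term exact sequence of $J$-modules $0\to K\to F\b{L}\to F\b{M}\to F\b{N}\to C\to 0$.

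Next, I would fix $\add\T$-presentations $\T^X_-\to\T^X_+\to\b{X}\to \Sigma\T^X_-$ for $X=\b{L},\b{N}$ and lift $\b{h}$ to a chain map between the two $\add\T$-complexes (this uses the cluster-tilting vanishing $\C(\T,\Sigma\T)=0$ to remove obstructions). Rotating the triangle once and applying $F$ shows that $\img(F\b{h})=C$, so $F\b{h}$ factors as $F\b{N}\twoheadrightarrow C\hookrightarrow F\Sigma\b{L}$; by \eqref{eq:equiv} this factorization lifts to $\b{N}\to\b{C}\to\Sigma\b{L}$ modulo $(\Sigma\T)$-morphisms. Taking the mapping cone of the lifted chain map and completing via the octahedral axiom produces a presentation of $\b{M}$ together with two ``defect'' triangles supplying $\add\T$-presentations of $\b{C}$ and of $\Sigma^{-1}\b{C}$. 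Reading off the resulting identity in $K_0(\add\T)=\mb{Z}^{Q_0}$ via Lemma \ref{L:indwt} yields
\begin{equation*}
\ind\b{M}=\ind\b{L}+\ind\b{N}-\ind\b{C}-\ind\Sigma^{-1}\b{C}.
\end{equation*}

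The principal obstacle is the diagram chase in the octahedron: one must verify that the discrepancies introduced by lifting $\b{h}$ to a chain map are captured \emph{exactly} by $\add\T$-presentations of $\b{C}$ and of $\Sigma^{-1}\b{C}$, up to $\add\Sigma\T$-summands that vanish in the index. The appearance of both $\b{C}$ and $\Sigma^{-1}\b{C}$, rather than just $\b{C}$, reflects that the ``defect'' at $\b{N}\to\b{C}$ on one side of the octahedral diagram propagates to a shifted defect involving $\Sigma^{-1}\b{C}$ on the other, the combination accounting precisely for the gap between the cohomological and exact behavior of $F$. Once this bookkeeping is carried out, the index formula follows by additivity in $K_0(\add\T)$, and the coindex formula by the dual argument.
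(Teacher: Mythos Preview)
The paper does not prove this lemma; it is quoted directly from \cite[Proposition 2.2]{Pa} and used as a black box. There is therefore no ``paper's own proof'' to compare against. Your outline follows the strategy of Palu's original argument---factor $\b{h}$ through a lift $\b{C}$ of $\coker F\b{g}$, invoke the octahedral axiom, and read off indices---so in that sense you are reproducing the cited proof rather than offering an alternative.

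That said, your sketch is vague precisely at the point you flag as the ``principal obstacle.'' In Palu's argument the octahedral axiom is applied to the composite $\b{N}\to\b{C}\to\Sigma\b{L}$ (after choosing a lift of the factorization $F\b{N}\twoheadrightarrow C\hookrightarrow F\Sigma\b{L}$), and the two resulting triangles have third terms whose images under $F$ vanish; this forces them to lie in $\add\Sigma\T$, which is what makes the index computation go through. Your phrasing in terms of ``defect triangles supplying $\add\T$-presentations of $\b{C}$ and of $\Sigma^{-1}\b{C}$'' and a ``lifted chain map'' does not quite match this: the key step is not lifting $\b{h}$ to a chain map between presentations of $\b{L}$ and $\b{N}$, but rather lifting the \emph{factorization} of $F\b{h}$ through $C$ to $\C$ and then running the octahedron on that. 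If you want a self-contained proof, I would advise consulting \cite{Pa} directly for the precise sequence of triangles.
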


\begin{corollary}\label{C:long} A triangle $\b{L}\to \b{M}\to \b{N}\to \Sigma \b{L}$ in $\C$ gives a long exact sequence in $\mod J$:
\begin{equation} \label{eq:long} \cdots \to \tauh^{-1} \mc{L}\to \tauh^{-1} \mc{M}\to \tauh^{-1} \mc{N} \xrightarrow{\theta'} L\to M\to N \xrightarrow{\theta} \tauh \mc{L} \to \tauh \mc{M} \to \tauh \mc{N} \to \tauh^2 \mc{L} \to \cdots.
\end{equation}
with $\delta_{\mc{M}} = \delta_{\mc{L}} + \delta_{\mc{N}} + \rank(\theta) B$ and $\dtc_{\mc{M}} = \dtc_{\mc{L}} + \dtc_{\mc{N}} - \rank(\theta') B$.	
\end{corollary}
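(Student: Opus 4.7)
The plan is to apply the cohomological functor $F = \C(\T,-) : \C \to \mod J$ to the given triangle to produce the long exact sequence, then read off the $\delta$-vector identity from Palu's index formula (Lemma \ref{L:index}) and deduce the $\dtc$-vector identity by dimension counting.

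Since $F$ is cohomological, the triangle $\b{L}\xrightarrow{\b{f}}\b{M}\xrightarrow{\b{g}}\b{N}\xrightarrow{\b{g}'}\Sigma\b{L}$ yields a long exact sequence in $\mod J$ of the form $\cdots\to F(\Sigma^i\b{L})\to F(\Sigma^i\b{M})\to F(\Sigma^i\b{N})\to F(\Sigma^{i+1}\b{L})\to\cdots$. I would identify $F(\Sigma^i\b{X})=\tauh^i\mc{X}$ via the compatibility $\wtd{F}\Sigma=\tau\wtd{F}$: on a cluster-tilting summand $\T_u$, the decorated Keller--Reiten equivalence gives $\wtd{F}(\Sigma\T_u)=(0,S_u^-)$, while the definition $\dc_{\tau\mc{X}}=\nu d_\mc{X}$ yields $\tau(P_u,0)=(0,S_u^-)$. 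By additivity and the equivalence $\C/(\Sigma\T)\cong\mod J$, this extends to all of $\C$. Substituting produces the claimed long exact sequence with $\theta=F(\b{g}')$ and $\theta'=F(\Sigma^{-1}\b{g}')$.

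For the $\delta$-vector identity I would invoke Palu's index formula
\[
\ind\b{M}=\ind\b{L}+\ind\b{N}-\ind\b{C}-\ind\Sigma^{-1}\b{C},
\]
where $\b{C}$ is any lift of $\coker F(\b{g})$. By exactness, $\coker F(\b{g})=\img\theta$, so $\dv C=\rank\theta$. Choosing $\b{C}$ with trivial decoration, Lemma \ref{L:indwt} gives $\ind\b{C}=\delta_\mc{C}$ and $\ind\Sigma^{-1}\b{C}=\delta_{\tau^{-1}\mc{C}}$. Unpacking $\dc_{\tau\mc{X}}=\nu d_\mc{X}$ yields $\dtc_{\tau\mc{X}}=-\delta_\mc{X}$; specializing to $\mc{X}=\tau^{-1}\mc{C}$ gives $\delta_{\tau^{-1}\mc{C}}=-\dtc_\mc{C}$. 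Combined with \eqref{eq:delta2dual},
\[
-\ind\b{C}-\ind\Sigma^{-1}\b{C}=-\delta_\mc{C}+\dtc_\mc{C}=(\dv C)B=\rank(\theta)B,
\]
and Palu's formula becomes $\delta_\mc{M}=\delta_\mc{L}+\delta_\mc{N}+\rank(\theta)B$.

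For the $\dtc$-vector identity I would bypass the coindex half of Lemma \ref{L:index} and use dimension counting. Exactness of the long sequence at $L$, $M$, $N$ gives $\dv M=\dv L+\dv N-\rank\theta-\rank\theta'$. Multiplying by $B$, adding the $\delta$-identity just established, and applying \eqref{eq:delta2dual} three times yields $\dtc_\mc{M}=\dtc_\mc{L}+\dtc_\mc{N}-\rank(\theta')B$. The principal obstacle is pinning down the compatibility $\wtd{F}\Sigma=\tau\wtd{F}$ together with the sign relation $\dtc_{\tau\mc{X}}=-\delta_\mc{X}$; once these are in place, the $\delta$-identity is immediate from Palu's formula with a trivially decorated choice of $\b{C}$, and the $\dtc$-identity drops out of dimension counting.
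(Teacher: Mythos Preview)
Your proof is correct and follows the same approach as the paper: apply the cohomological functor $F=\C(\T,-)$ to obtain the long exact sequence via the identification $\C(\T,\Sigma^i\b{X})\cong\tauh^i\mc{X}$, then read off the $\delta$-identity from Palu's index formula (Lemma \ref{L:index}). The only difference is that the paper leaves both vector identities to Lemma \ref{L:index} (implicitly using the coindex half for $\dtc$), whereas you derive the $\dtc$-identity by the pleasant shortcut of dimension counting along the exact piece $L\to M\to N$ and combining with \eqref{eq:delta2dual}; this avoids unpacking the coindex formula and is arguably cleaner.
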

\begin{proof} We apply the functor $F=\C(\T,-)$ to the above triangle, and note that $\C(\T,\Sigma^i \b{M}) \cong \tauh^i \mc{M}$.
So we get the desired long exact sequence. Since $\delta_{\mc{M}} = \ind\b{M}$ by Lemma \ref{L:indwt}, the last formula then follows from Lemma \ref{L:index}.
\end{proof}
\noindent We had a direct constructive proof of a similar but weaker corollary without referring to the category $\C$. We'd like to thank 
Bernhard Keller for pointing out the connection to the 2-Calabi-Yau categories. This saved us at least two pages of space.

\subsection{Lifting Mutations}
In \cite{KY} Keller and Yang lifted Derksen-Weyman-Zelevinsky's mutation to the category $\mc{D}\Gamma$.
The lifted mutation $\wtd{\mu}_u^-$ is in fact a triangle equivalence $\mc{D}\Gamma \to \mc{D}\Gamma'$, where $\Gamma'$ is the complete Ginzburg dg algebra of $\wtd{\mu}_u(Q,\mc{S})$.
This equivalence restricts to the subcategories $\op{per}\Gamma\to \op{per}\Gamma'$ and $\mc{D}_{fd}\Gamma \to \mc{D}_{fd}\Gamma'$.
There are similar statements for the reduced quivers with potentials (see \cite[Theorem 3.2]{KY}).
In particular, it induces a triangle equivalence $\CQ \to \C_{\mu_u(Q,\mc{S})}$, denoted by $\br{\mu}_u^-$.
The lifted mutation is compatible with the ordinary one in the following sense.
\begin{theorem}[{\em cf.} {\cite[Proposition 4.1]{P0}}] \label{T:twomu} If $u \in Q_0$ is not on any cycle of length $2$, then for any object $\b{M}$ of $\mc{C}_{Q,\S}$, we have that 
\begin{equation}\label{eq:muwtd} \wtd{F}_{{\mu}_u(Q,\mc{S})}(\br{\mu}_u^-(\b{M})) = {\mu}_u(\wtd{F}_{Q,\S}(\b{M})).
\end{equation}
\end{theorem}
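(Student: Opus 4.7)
The plan is to verify \eqref{eq:muwtd} summand by summand, exploiting that $\wtd{F}$, $\br{\mu}_u^-$, and $\mu_u$ are all additive. Write $\b{M}=\b{M}'\oplus\bigoplus_{v\in Q_0}m_v\,\Sigma\T_v$, where $\b{M}'$ contains no direct summand in $\add\Sigma\T$. By the definition of $\wtd{F}$ we have $\wtd{F}_{Q,\S}(\b{M})=(F\b{M}',\bigoplus_v m_v S_v)$, so both sides of \eqref{eq:muwtd} split in a way compatible with this decomposition. The assumption that $u$ lies on no $2$-cycle is exactly what makes $\mu_u$ (and hence $\br{\mu}_u^-$) well-defined.

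For the summand $\b{M}'$ I would invoke Plamondon's \cite[Proposition 4.1]{P0} verbatim: it establishes the intertwining $\wtd{F}_{\mu_u(Q,\S)}\circ\br{\mu}_u^- = \mu_u\circ\wtd{F}_{Q,\S}$ on objects without $\Sigma\T$-summands. The slight adjustment is to track where the new negative summand at $u$ may appear on each side: on the QP side it is the contribution $M^-(u)=\ker\beta_u/(\ker\beta_u\cap\img\alpha_u)$ in the definition of $\wtd{\mu}_u(\mc{M})$, and on the $\C$ side it is the copy of $\Sigma\T_u'$ that can split off $\br{\mu}_u^-(\b{M}')$ with respect to the new cluster-tilting object $\T'=\br{\mu}_u^-(\T)$. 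Matching these two descriptions, via the equivalence $\mc{C}_{\mu_u(Q,\S)}/(\Sigma\T')\cong\mod J(\mu_u(Q,\S))$ restricted to $\add\T'$, identifies the two reductions and hence gives equality after passing to the reduced parts in both constructions.

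For the summands $\Sigma\T_v$ with $v\neq u$, Keller–Yang's mutation does not touch the $v$-th summand of $\T$, so $\br{\mu}_u^-(\Sigma\T_v)=\Sigma\T_v'$ and $\wtd{F}_{\mu_u(Q,\S)}(\Sigma\T_v')=S_v^-$. On the DWZ side $\mu_u(S_v^-)=S_v^-$ since $v\neq u$ is not affected by the mutation, so both sides coincide. The essential case is $v=u$: the lifted mutation $\br{\mu}_u^-$ sends $\Sigma\T_u$ to the new indecomposable summand $\T_u^*$ defined by the Keller–Yang exchange triangle in $\mc{C}_{\mu_u(Q,\S)}$, and $\wtd{F}_{\mu_u(Q,\S)}(\T_u^*)=P_u'$, the indecomposable projective of the mutated Jacobian algebra at $u$. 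On the DWZ side, the recipe $\wtd{\mu}_u$ applied to $S_u^-=(0,S_u)$ yields a representation whose reduced part is precisely $P_u'$, so $\mu_u(S_u^-)=P_u'$.

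The main obstacle will be Case $v=u$: pinning down that Keller–Yang's $\br{\mu}_u^-(\Sigma\T_u)=\T_u^*$ is sent by $\wtd{F}_{\mu_u(Q,\S)}$ to the new projective $P_u'$, not merely to some object isomorphic to it in $\mc{C}_{\mu_u(Q,\S)}/(\Sigma\T')$. This requires carefully unpacking the exchange triangle defining $\T_u^*$ in $\mc{C}_{\mu_u(Q,\S)}$ and matching it, via the restricted equivalence $\add\T'\to\proj J(\mu_u(Q,\S))$, with the explicit presentation $0\to P_u'$. A secondary subtlety is that the reduction step in both Definition of $\mu_u(\mc{M})$ and of $\mu_u(Q,\S)$ must be performed consistently; one resolves this by fixing, once and for all, reduced models in both categories so that the triangle equivalence and the functor $\wtd{F}$ respect them.
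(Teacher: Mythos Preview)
Your approach has a genuine gap and a concrete error, and it differs substantially from the paper's argument.

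First, you misidentify what Plamondon's \cite[Proposition 4.1]{P0} actually gives. As the paper states it, Plamondon proves the \emph{unreduced} intertwining
\[
\wtd{F}_{\wtd{\mu}_u(Q,\mc{S})}(\wtd{\mu}_u^-(\b{M})) = \wtd{\mu}_u(\wtd{F}_{Q,\S}(\b{M})),
\]
and this already holds for arbitrary $\b{M}$, including summands in $\add\Sigma\T$. The entire content of Theorem~\ref{T:twomu} is passing from $\wtd{\mu}_u$ to the reduced mutation $\mu_u$. You cannot invoke Plamondon ``verbatim'' for the reduced version; that is the thing to be proved. Your ``secondary subtlety'' about fixing reduced models is therefore not secondary at all---it is the whole proof, and your resolution of it is not an argument.

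Second, your computation in the case $v=u$ is wrong. Applying the DWZ recipe to $S_u^-=(0,S_u)$ gives $\wtd{M}(v)=0$ for $v\neq u$ and $\wtd{M}(u)=M^-(u)=k$, with $\wtd{M}^-=0$. Hence $\mu_u(S_u^-)=S_u$, the simple at $u$ in the mutated algebra, not the projective $P_u'$. Correspondingly, $\br{\mu}_u^-(\Sigma\T_u)$ is a lift of $S_u$, not an object sent by $\wtd{F}_{\mu_u(Q,\S)}$ to $P_u'$.

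The paper's proof bypasses all summand analysis. It uses Keller--Yang's quasi-isomorphism $\Gamma\to\Gamma_{\red}$ \cite[Lemma 2.10]{KY} to obtain a triangle equivalence $\red:\C_{\wtd{\mu}_u(Q,\S)}\to\C_{\mu_u(Q,\S)}$, and then observes that $\wtd{F}_{\mu_u(Q,\S)}\circ\red=\wtd{F}_{\wtd{\mu}_u(Q,\S)}$ follows directly from that equivalence. Composing with Plamondon's unreduced statement yields \eqref{eq:muwtd} in one step.
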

\begin{proof} This is a simple variation of \cite[Proposition 4.1]{P0}, in which we already have that
\begin{equation}\label{eq:mubar} \wtd{F}_{\wtd{\mu}_u(Q,\mc{S})}(\wtd{\mu}_u^-(\b{M})) = \wtd{\mu}_u(\wtd{F}_{Q,\S}(\b{M})).
\end{equation}
It follows from \cite[Lemma 2.10]{KY} that there is a quasi-isomorphism $\Gamma \to \Gamma_{\op{red}}$,
where $\Gamma_{\op{red}}$ is the complete Ginzburg dg algebra associated to the reduced quiver with potential $(Q_{\red}, \S_{\red})$.
Since quasi-isomorphisms of dg algebras induce triangle equivalences in their derived categories,
we thus get a triangle equivalence $\red: \mc{C}_{Q,\S} \to \mc{C}_{Q_{\red}, \S_{\red}}$.
Comparing \eqref{eq:muwtd} with \eqref{eq:mubar}, we remain to show that
$\wtd{F}_{{\mu}_u(Q,\mc{S})}\circ \red = \wtd{F}_{\wtd{\mu}_u(Q,\S)}$.
It suffices to show that
$$ \wtd{F}_{Q_{\red},\mc{S}_{\red}}\circ \red = \wtd{F}_{Q,\S},\ \text{
 or equivalently, }\ 
\mc{C}_{\red}(\Sigma^{-1}\Gamma_{\red},\ \red(-)) = \mc{C}(\Sigma^{-1}\Gamma,\ -).$$
But this also follows from the triangle equivalence $\red: \mc{C}_{Q,\S} \to \mc{C}_{Q_{\red}, \S_{\red}}$.
\end{proof}

Given an admissible sequence of mutations $\mub=\mu_{u_r}\cdots\mu_{u_2}\mu_{u_1}$, we have a sequence of triangle equivalences
\begin{equation} \label{eq:trieq} \br{\mu}_{\b{u}}^-: \CQ \to \C_{\mu_{u_1}(Q,\mc{S})}\to \C_{\mu_{u_2}\mu_{u_1}(Q,\mc{S})} \to \cdots\to \C_{\mub(Q,\mc{S})}. \end{equation}
Let the functor $\wtd{F}'$ be the composition $\wtd{F}_{\mub(Q,\mc{S})}\circ \br{\mu}_{\b{u}}^-$. We write $J'$ for the Jacobian algebra of $\mub(Q,\mc{S})$.
\begin{lemma} \label{L:gendes} Given an admissible sequence of mutations $\mub=\mu_{u_r}\cdots\mu_{u_2}\mu_{u_1}$, the functor $\wtd{F}'$ sends a general object $\b{M}$ of index $\delta$ in $\C$ to a general object of weight $\mub(\delta)$ in $\mc{R}ep J'$,
	and a general morphism $\b{g}$ in $\C(\b{M},\b{N})$	to a general homomorphism in $\Hom_{J'}(\wtd{F}'\b{M}, \wtd{F}'\b{N})$.
\end{lemma}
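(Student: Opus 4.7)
The plan is to unfold the definition $\wtd{F}'=\wtd{F}_{\mub(Q,\mc{S})}\circ \br{\mu}_{\b{u}}^-$ and reduce to (a) the empty-mutation case and (b) repeated application of Lemma \ref{L:genmu}. First, I would iterate Theorem \ref{T:twomu} along the sequence \eqref{eq:trieq} to obtain the identity
$$\wtd{F}'(\b{M})=\wtd{F}_{\mub(Q,\mc{S})}\bigl(\br{\mu}_{\b{u}}^-(\b{M})\bigr)=\mub\bigl(\wtd{F}(\b{M})\bigr).$$
This cleanly separates the object-level claim into two pieces.

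For the object-level statement in the empty-sequence case, I would invoke the fact that $F|_{\add\T}\colon \add\T\to\proj J$ is an equivalence, so it identifies $\T\!\Hom(\delta)$ with $\PHom_J(\delta)$ and sends a general $\b{d}\in\T\!\Hom(\delta)$ to a general $d\in\PHom_J(\delta)$. Since $\C(\T,\Sigma\T_u)=0$, the functor $F=\C(\T,-)$ applied to the defining triangle of the cone $\b{M}$ of $\b{d}$ yields $F\b{M}=\coker(F\b{d})$; the decorated part of $\wtd{F}\b{M}$ accounts for direct summands of the form $\Sigma\T_u$ in $\b{M}$, and these match the canonical decomposition of weight $\delta$. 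By Lemma \ref{L:U0} (and the $\GL_{\dv(\delta)}$-equivariant picture of Lemma \ref{L:gqiso}), the cokernel of a general presentation lies in an open $\GL$-stable subset of $\PC(\delta)$, so $\wtd{F}\b{M}$ is a general representation of weight $\delta$. For general $\mub$, I then apply Lemma \ref{L:genmu} inductively along the sequence $u_1,u_2,\dots,u_r$ to conclude that $\mub(\wtd{F}\b{M})=\wtd{F}'\b{M}$ is general of weight $\mub(\delta)$.

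For the morphism statement, I would combine two observations. First, because each $\br{\mu}_{u_i}^-$ is a triangle equivalence, composing them gives a linear isomorphism $\C(\b{M},\b{N})\cong \C_{\mub(Q,\mc{S})}\bigl(\br{\mu}_{\b{u}}^-\b{M},\,\br{\mu}_{\b{u}}^-\b{N}\bigr)$, which clearly sends a Zariski-open subset to a Zariski-open subset. Second, the equivalence \eqref{eq:equiv} identifies $\C_{\mub(Q,\mc{S})}(\b{M}',\b{N}')/(\Sigma\T')(\b{M}',\b{N}')$ with $\Hom_{J'}(\wtd{F}_{\mub(Q,\mc{S})}\b{M}',\wtd{F}_{\mub(Q,\mc{S})}\b{N}')$, so the linear map induced by $\wtd{F}_{\mub(Q,\mc{S})}$ on morphisms is surjective. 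The image of an open dense subset under a surjective linear map is constructible and contains an open dense subset of the target, and so a general $\b{g}\in\C(\b{M},\b{N})$ maps to a general element of $\Hom_{J'}(\wtd{F}'\b{M},\wtd{F}'\b{N})$.

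I do not expect a substantial obstacle; the only point that merits care is the empty-sequence base case for objects, where one must verify that the open subset of $\T\!\Hom(\delta)$ whose cones realize general objects of index $\delta$ is pulled back from (respectively pushed forward to) the open subset of $\PHom_J(\delta)$ witnessing generality in $\PC(\delta)$. This is precisely the content of Lemma \ref{L:U0} together with the equivalence $F|_{\add\T}$, and once it is secured the induction via Theorem \ref{T:twomu} and Lemma \ref{L:genmu} proceeds without surprises.
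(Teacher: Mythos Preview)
Your proposal is correct and follows essentially the same route as the paper: factor $\wtd{F}'=\mub\circ\wtd{F}_{Q,\mc{S}}$ via Theorem \ref{T:twomu}, handle objects by Lemma \ref{L:genmu}, and handle morphisms by noting that the triangle equivalence is a linear isomorphism on $\Hom$-spaces and the passage to $\Hom_{J'}$ is a surjective linear (hence open) map. The only difference is that you spell out the empty-sequence base case more explicitly via Lemma \ref{L:U0}, whereas the paper simply asserts that $\wtd{F}_{Q,\mc{S}}(\b{M})$ is general of weight $\delta$.
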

\begin{proof} Due to Theorem \ref{T:twomu}, $\wtd{F}' = \mub\circ \wtd{F}_{Q,\mc{S}}$.
	Note that $\wtd{F}_{Q,\mc{S}}(\b{M})$ is general of weight $\delta$.
	So it suffices to show that a general representation of weight $\delta$ will be mutated to a general representation of weight $\mub(\delta)$. But this follows from Lemma \ref{L:genmu}.
	Since $\mub^-$ is a triangle equivalence, $\mub^-(\b{g})$ is general in $\mc{C}(\b{M}', \b{N}')$ where $\b{M}'=\mub^-(\b{M})$ and $\b{N}'=\mub^-(\b{N})$.
	For $\T'=\Sigma^{-1}\Gamma_{\mu_\b{u}(Q,\mc{S})}$, $\C(\b{M}',\b{N}') \to \C(\b{M}',\b{N}')/(\Sigma \T')(\b{M}',\b{N}') \cong \Hom_{J'}(M', N')$ is a vector space projection. 
	In particular, it is an open map so a general morphism in $\C(\b{M}',\b{N}')$ descends to a general element in $\Hom_{J'}(M',N')$.
\end{proof}

\subsection{The Lowering and Raising Operators in $\C$}
\begin{definition}\label{D:rlc} For any object $\b{E}\in\C$, we define the operator $\b{r}_\b{E}$ on $K_0(\add \T)$ as follows.
Pick an object $\b{M}$ of index $\delta$, and let $\b{g}$ be a morphism $\C(\b{E},\Sigma\b{M})$. We complete $\b{g}$ to a triangle 
\begin{equation}\label{eq:rtri} \Sigma^{-1}\b{E}\to \b{M}\to \b{R} \to \b{E}. \end{equation} 
By Lemma \ref{L:index}	the index of $\b{R}$ is constant if $\b{M}$ is general and $\b{g}$ is general in $\C(\b{E},\Sigma\b{M})$.
	We define $\b{r}_\b{E}(\delta)$ to be such a constant index of $\b{R}$.
	Similarly we can take $\b{g}$ to be a general morphism $\C(\b{M},\b{E})$ and complete it to 
	\begin{equation}\label{eq:ltri} \Sigma^{-1}\b{E}\to \b{L}\to \b{M}\to \b{E}, \end{equation}
and define $\b{l}_\b{E}(\delta)$ to be the index of $\b{L}$. 	
	If working with the coindices, we get another version of $\b{r}_\b{E}$ and $\b{l}_\b{E}$ which will be denoted by $\b{r}^\b{E}$ and $\b{l}^\b{E}$.	
If $\b{E}$ is a general object of index $\ep$, then $\b{r}_{\b{E}}$ is constant by Lemma \ref{L:index} and Lemma \ref{L:genrank} later.
We will denote $\b{r}_\b{E}$ and $\b{l}_\b{E}$ by $\b{r}_\ep$ and $\b{l}_\ep$.
\end{definition}
\noindent It is unclear from the definition if the above $\b{R}$ or $\b{L}$ can be chosen as a general element of index $\b{r}_\ep$ or $\b{l}_\ep$.
Later we will see that this indeed can be done in most cases (conjecturally for any nondegenerate $(Q,\mc{S})$).
We remark that the general morphism $\b{g}$ in the above definition can be chosen from other periods of the triangle.

\begin{definition} \label{D:liftc} We say $\b{r}_\ep$ (resp. $\b{l}_\ep$) can be generically lifted to a triangle at $\delta$ if the above $\b{R}$ (resp. $\b{L}$) can be chosen as a general element of index $\b{r}_\ep(\delta)$ (resp. $\b{l}_\ep(\delta)$).
To be more precise, this means that a general object $\b{R}$ (resp. $\b{L}$) of index $\b{r}_\ep(\delta)$ (resp. $\b{l}_\ep(\delta)$) fits into the triangle \eqref{eq:rtri} (resp. \eqref{eq:ltri}) such that $(\b{M},\b{E})$ is a general pair of index $(\delta,\ep)$ and $\b{g}$ is a general morphism in $\C(\b{E},\Sigma\b{M})$ (resp. $\C(\b{M},\b{E})$).
\end{definition}
\noindent We emphasis that in the above definition we do not ask any pair other than $(\b{M},\b{E})$ to be general. For example, $(\b{M},\b{R})$ or $(\b{R},\b{E})$ may not be general as a pair.


\begin{lemma} \label{L:rle=0} If $\C(\Sigma^{-1}\b{E}_2, \b{E}_1)=0$, then 	we have that $\b{r}_{\b{E}_1\oplus \b{E}_2} = \b{r}_{\b{E}_2}\b{r}_{\b{E}_1}$ and $\b{l}_{\b{E}_1\oplus \b{E}_2} = \b{l}_{\b{E}_1}\b{l}_{\b{E}_2}$.
If in addition $\C(\Sigma^{-1}\b{E}_1, \b{E}_2)=0$ then $\b{r}_{\b{E}_1}$ and $\b{r}_{\b{E}_2}$ commute and $\b{l}_{\b{E}_1}$ and $\b{l}_{\b{E}_2}$ commute.
\end{lemma}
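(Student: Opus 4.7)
The plan is to establish $\b{r}_{\b{E}_1\oplus\b{E}_2}=\b{r}_{\b{E}_2}\b{r}_{\b{E}_1}$ by matching the one-step cone at $\b{E}_1\oplus\b{E}_2$ with a two-step cone built first at $\b{E}_1$ and then at $\b{E}_2$. The geometric tool is the octahedral axiom, and the hypothesis $\C(\Sigma^{-1}\b{E}_2,\b{E}_1)=0$ (equivalently $\C(\b{E}_2,\Sigma\b{E}_1)=0$) is used twice: to split off a potential extension and to ensure that a general morphism out of $\b{E}_2$ to the intermediate cone actually comes from a morphism to $\Sigma\b{M}$.

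In detail, fix a general $\b{M}$ of index $\delta$ and a general morphism $(\b{g}_1,\b{g}_2):\b{E}_1\oplus\b{E}_2\to\Sigma\b{M}$ with fiber $\b{R}$, so $\ind\b{R}=\b{r}_{\b{E}_1\oplus\b{E}_2}(\delta)$. Complete $\b{g}_1$ alone to a triangle $\b{M}\xrightarrow{u_1}\b{R}_1\to\b{E}_1\xrightarrow{\b{g}_1}\Sigma\b{M}$ with $\ind\b{R}_1=\b{r}_{\b{E}_1}(\delta)$. Applying $\C(\b{E}_2,-)$ to this triangle and using $\C(\b{E}_2,\Sigma\b{E}_1)=0$ shows that $(\Sigma u_1)_\ast:\C(\b{E}_2,\Sigma\b{M})\to\C(\b{E}_2,\Sigma\b{R}_1)$ is surjective, so $\b{h}_2:=(\Sigma u_1)\b{g}_2$ is general; let $\b{R}_2$ be its cone, so $\ind\b{R}_2=\b{r}_{\b{E}_2}(\b{r}_{\b{E}_1}(\delta))$. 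Now the octahedral axiom applied to $\b{M}\xrightarrow{u_1}\b{R}_1\to\b{R}_2$ produces a triangle $\b{M}\to\b{R}_2\to\b{C}\xrightarrow{\b{w}}\Sigma\b{M}$ together with a triangle $\b{E}_1\to\b{C}\to\b{E}_2\to\Sigma\b{E}_1$; the vanishing splits the latter as $\b{C}\cong\b{E}_1\oplus\b{E}_2$. Writing $\b{w}=(\b{g}_1,\b{g}_2')$, any two lifts of $\b{h}_2$ differ by an element of the image of $(\b{g}_1\circ-):\C(\b{E}_2,\b{E}_1)\to\C(\b{E}_2,\Sigma\b{M})$, so $(\b{g}_1,\b{g}_2)$ and $(\b{g}_1,\b{g}_2')$ are interchanged by the upper-triangular automorphism $\left(\begin{smallmatrix}1&\phi\\0&1\end{smallmatrix}\right)$ of $\b{E}_1\oplus\b{E}_2$ acting on the source of the map. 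Hence $\b{R}\cong\b{R}_2$ and the two indices agree.

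The identity $\b{l}_{\b{E}_1\oplus\b{E}_2}=\b{l}_{\b{E}_1}\b{l}_{\b{E}_2}$ is proved dually: cone out $\b{g}_2:\b{M}\to\b{E}_2$ first to obtain $\b{L}_2$, then use the surjection $\C(\b{M},\b{E}_1)\twoheadrightarrow\C(\b{L}_2,\b{E}_1)$ (again from $\C(\Sigma^{-1}\b{E}_2,\b{E}_1)=0$) to see that $\b{g}_1$ descends to a general map $\b{L}_2\to\b{E}_1$, whose fiber $\b{L}_{12}$ has index $\b{l}_{\b{E}_1}(\b{l}_{\b{E}_2}(\delta))$. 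The dual octahedral together with the same splitting identifies $\b{L}_{12}$ with the fiber $\b{L}$ of the original map $\b{M}\to\b{E}_1\oplus\b{E}_2$. When both $\C(\Sigma^{-1}\b{E}_1,\b{E}_2)=0$ and $\C(\Sigma^{-1}\b{E}_2,\b{E}_1)=0$ hold, swapping the roles of $\b{E}_1$ and $\b{E}_2$ in the same argument also yields $\b{r}_{\b{E}_1\oplus\b{E}_2}=\b{r}_{\b{E}_1}\b{r}_{\b{E}_2}$ and $\b{l}_{\b{E}_1\oplus\b{E}_2}=\b{l}_{\b{E}_2}\b{l}_{\b{E}_1}$, giving the commutativity statements.

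The main obstacle I anticipate is that the definition of $\b{r}_{\b{E}_2}$ at $\ind\b{R}_1$ requires a \emph{general} object of that index, whereas the intermediate $\b{R}_1$ produced in the construction need not be generic in its principal component (cf. Definition \ref{D:liftc}). I would handle this by appealing only to Lemma \ref{L:index}: the index $\ind\b{R}_2$ is determined by the dimension vector of a specific cokernel in $\mod J$, and the surjectivity of $(\Sigma u_1)_\ast$ together with genericity of $\b{g}_2$ guarantees that this cokernel takes its generic value, so that $\ind\b{R}_2$ equals $\b{r}_{\b{E}_2}(\b{r}_{\b{E}_1}(\delta))$ as computed by the operator, independently of whether $\b{R}_1$ itself is generic.
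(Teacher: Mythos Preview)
Your proof is correct and essentially identical to the paper's: both push a general morphism through the surjection $\C(\Sigma^{-1}\b{E}_2,\b{M})\twoheadrightarrow\C(\Sigma^{-1}\b{E}_2,\b{R}_1)$ provided by $\C(\Sigma^{-1}\b{E}_2,\b{E}_1)=0$ and then invoke the octahedral axiom (in the paper's phrasing, a nine-diagram with split first row) to identify the two cones. The genericity concern you raise about $\b{R}_1$ is equally present in the paper, which simply asserts ``by construction and definition'' that $\ind\b{R}_{1,2}=\b{r}_{\b{E}_2}\b{r}_{\b{E}_1}(\delta)$; your attempted workaround via Lemma~\ref{L:index} is no more and no less rigorous than the paper's implicit treatment.
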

\begin{proof} We only prove the statement for $\b{r}_{\b{E}}$ because the argument for $\b{l}_{\b{E}}$ is similar.
Let $(\b{f}_1,\b{f}_2)$ be a general morphism in $\C(\Sigma^{-1}(\b{E}_1\oplus\b{E}_2),\b{M})$. 
Then $\b{f}_1$ and $\b{f}_2$ are general in $\C(\Sigma^{-1}\b{E}_1,\b{M})$ and $\C(\Sigma^{-1}\b{E}_2,\b{M})$.
Apply $\C(\Sigma^{-1}\b{E}_2,-)$ to the triangle in the second row of \eqref{eq:nine-diagram}, we get 
$$\C(\Sigma^{-1}\b{E}_2, \b{M}) \to \C(\Sigma^{-1}\b{E}_2, \b{R}_1) \to \C(\Sigma^{-1}\b{E}_2, \b{E}_1) =0.$$
So we may choose a morphism $\b{g}$ general in $\C(\Sigma^{-1}\b{E}_2, \b{R}_1)$ making the upper right square commute.
Since the triangle of first row is split (in particular contractible), we can complete the first two rows to a nine-diagram below \cite{N}.
By construction and definition, the index of $\b{R}_{1\oplus 2}$ is $\b{r}_{\b{E}_1\oplus \b{E}_2}(\delta)$ while 
the index of $\b{R}_{1,2}$ is $\b{r}_{\b{E}_2}\b{r}_{\b{E}_1}(\delta)$.
But these two objects are isomorphic.	
\begin{equation}\label{eq:nine-diagram} \xymatrix{
		\Sigma^{-1}\b{E}_1 \ar[r]\ar@{=}[d]_{} & \Sigma^{-1}(\b{E}_1\oplus\b{E}_2) \ar[r]\ar[d]^{(\b{f}_1,\b{f}_2)} & \Sigma^{-1}\b{E}_2 \ar[d]^{\b{g}} \ar[r] & \b{E}_1\\ 
		\Sigma^{-1}\b{E}_1 \ar[r]^{\b{f}_1}\ar[d] & \b{M} \ar[r]\ar[d] & \b{R}_1 \ar@{-->}[d]\ar[r] & \b{E}_1\\ 
		0 \ar[r]\ar[d] & \b{R}_{1\oplus 2} \ar@<.5ex>@{--}[r] \ar@<-.5ex>@{--}[r] \ar[d] & \b{R}_{1,2} \ar[d]\ar[r]& 0\\
		\b{E}_1 & \b{E}_1 \oplus \b{E}_2 &  \b{E}_2 &
	}
\end{equation}
If in addition $\C(\Sigma^{-1}\b{E}_1, \b{E}_2)=0$, then $\b{r}_{\b{E}_1\oplus \b{E}_2} = \b{r}_{\b{E}_1}\b{r}_{\b{E}_2}$ so $\b{r}_{\b{E}_1}$ and $\b{r}_{\b{E}_2}$ commute.

\end{proof}


\section{The General Ranks} \label{S:GR}
\subsection{The Raising and Lowering Operators} 
Schofield introduced the general rank for quiver representations in his theory of general representations \cite{S}.
The following lemma is a straightforward generalization of \cite[Lemma 5.1]{S}.
\begin{lemma} \label{L:genrank} Let $A$ be a finite-dimensional algebra. Given any two irreducible closed sets $X$ and $Y$ in representation varieties of $A$, there is an open subset $U$ of $X \times Y$ and a dimension vector $\gamma$ such that for $(M,N)\in U$ we have that $\hom_A(M,N)$ is minimal and $\{\phi\in\Hom_A(M,N)\mid \rank \phi = \gamma \}$ is open and non-empty in $\Hom_A(M,N)$. 
\end{lemma}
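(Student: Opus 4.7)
The plan is to adapt Schofield's original argument by working on an incidence variety over $X\times Y$ and exploiting the lower semi-continuity of the rank of each component map. First, by upper semi-continuity of $\hom_A(-,-)$ on $X\times Y$, pick a nonempty open $U_1\subseteq X\times Y$ on which $\hom_A(M,N)$ attains its minimum value $h$. Form the incidence variety
$$T := \{(M,N,\phi) : (M,N)\in U_1,\ \phi\in \Hom_A(M,N)\},$$
with natural projection $\pi\colon T\to U_1$. Since the $\Hom$ sheaf has constant rank $h$ on $U_1$, it is locally free there, so $\pi$ is a rank-$h$ vector bundle and $T$ is irreducible of dimension $\dim U_1+h$.

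Next, stratify $T$ by the dimension vector of the image. For each vertex $u\in Q_0$, the function $(M,N,\phi)\mapsto \rank \phi_u$ is lower semi-continuous on $T$ (rank of a linear map is lower semi-continuous). Let $\gamma(u)$ be its maximum value on $T$, and set
$$T_\gamma := \{(M,N,\phi)\in T : \dv(\img\phi) = \gamma\} = \bigcap_{u\in Q_0}\{\rank\phi_u = \gamma(u)\}.$$
Each factor on the right is open and, being nonempty in the irreducible variety $T$, dense; hence $T_\gamma$ is open and dense in $T$.

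Now transfer this genericity down to the base. The complement $T\setminus T_\gamma$ is a proper closed subset of $T$, so $\dim(T\setminus T_\gamma)<\dim U_1+h$. Applying Chevalley's upper semi-continuity of fibre dimension to $\pi|_{T\setminus T_\gamma}\colon T\setminus T_\gamma \to U_1$, there is a nonempty open $U\subseteq U_1$ such that for every $(M,N)\in U$, the fibre $(T\setminus T_\gamma)\cap \pi^{-1}(M,N)$ has dimension strictly less than $h$. This fibre equals
$$\{\phi\in \Hom_A(M,N) : \dv(\img\phi)\neq \gamma\},$$
a closed subset of $\Hom_A(M,N)\cong k^h$. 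Being closed of dimension $<h$, its complement $\{\phi\in\Hom_A(M,N) : \dv(\img\phi)=\gamma\}$ is open and nonempty in $\Hom_A(M,N)$, which is exactly what we need.

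The only mildly subtle point is ensuring that the generic image dimension vector is the \emph{same} constant $\gamma$ for all $(M,N)$ in an open subset of $X\times Y$, rather than merely being generic on the total incidence variety (for each fixed $(M,N)$ there is a generic image dimension vector $\gamma(M,N)$, and a priori this might vary with $(M,N)$). The dimension count on $T\setminus T_\gamma$ combined with Chevalley handles this in one stroke: if $\gamma(M,N)\neq \gamma$ on a dense subset of $U_1$, then the fibre of $T_\gamma$ would be empty on that dense subset, forcing $T\setminus T_\gamma$ to dominate $U_1$ with fibres of dimension $h$, contradicting $\dim(T\setminus T_\gamma)<\dim U_1+h$.
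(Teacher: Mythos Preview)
Your argument is correct and is precisely the adaptation of Schofield's proof that the paper alludes to (the paper itself gives no proof, only the remark that this is ``a straightforward generalization of \cite[Lemma~5.1]{S}''). One minor streamlining: rather than bounding $\dim(T\setminus T_\gamma)$ and invoking fibre-dimension semicontinuity on the base, you can simply take $U:=\pi(T_\gamma)$, which is open in $U_1$ because the vector-bundle projection $\pi$ is an open map; for $(M,N)\in U$ the set $T_\gamma\cap\pi^{-1}(M,N)$ is then nonempty by construction and open in $\Hom_A(M,N)$ since $T_\gamma$ is open in $T$.
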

\noindent Below the algebra $A$ will always be the Jacobian algebra of some quiver with potential.

\begin{definition} If one of $X$ and $Y$ is a single representation, say $Y=\{E\}$, and $X$ is the principal component $\PC(\delta)$, then the above dimension vector is denoted by $\rank(\delta,E)$.
If $X=\PC(\delta)$ and $Y=\PC(\ep)$, then $\gamma$ is called the {\em general rank} from $\delta$ to $\ep$, denoted by $\rank(\delta,\ep)$. There are obvious variations if we replace $\delta$ or $\ep$ by a $\dtc$-vector.
\end{definition}

\begin{example} If $(\delta,\ep)$ is an {\em exchange pair}, that is, $\delta$ and $\ep$ are indecomposable rigid with $\e(\delta,\ep)=1$ and $\e(\ep,\delta)=0$, then $\rank(\ep, \tau\delta)\neq 0$ is the {\em $c$-vector} associated to the exchange pair \cite[Section 7]{Ft}.
For the connection to the corresponding concept in cluster algebras, we refer readers to \cite[Remark 7.7]{Ft}.
\end{example}

\noindent For quivers with potentials, we have that $\PC(\delta)=\PC(\dtc)$ by Theorem \ref{T:genpi}, so $\rank(\delta,\ep) = \rank(\dtc,\ep)$. In what follows, whenever the quiver $Q$ is fixed, we will write $B$ for $B_Q$.

\begin{definition} \label{D:rlgrank} For any decorated representation $\mc{E}=(E,E^-)$ of weight $\ep$, we define the two operators $r_{\mc{E}}$ and $l_{\mc{E}}$ on the set of $\delta$-vectors as follows:
\begin{align} \label{eq:re}	r_{\mc{E}} (\delta) &= \delta+\ep +\rank(E, \tau\delta) B; \\
	\label{eq:le} l_{\mc{E}} (\delta) &= \delta-\epc +\rank(\delta,E) B. 
\intertext{We also define the two operators $r^{\mc{E}}$ and $l^{\mc{E}}$ on the set of $\dtc$-vectors}
\notag   r^{\mc{E}} (\dtc) & =  \dtc - \tau^{-1}\ep -\rank(\tauh^{-1}\mc{E}, \dtc) B;	\\	
\notag	 l^{\mc{E}} (\dtc) &= \dtc + \tau^{-1}\epc - \rank(\tau^{-1}\dtc, \tauh^{-1}\mc{E}) B.
\end{align}	
If $\mc{E}$ is general of weight $\ep$, then we will write $\ep$ instead of $\mc{E}$ in $r_{\mc{E}}$ and $l_{\mc{E}}$.
\end{definition}
\noindent We'd like to make it clear as early as possible that the main characters of this paper are $r_\ep$ and $l_\ep$ rather than $r_{\mc{E}}$ and $l_{\mc{E}}$.
We could have defined $r_\ep$ and $l_\ep$ without mentioning $r_{\mc{E}}$ and $l_{\mc{E}}$.
It follows directly from the definition that
\begin{equation}\label{eq:r2l} r_\ep(\delta) = l_{\tau\delta}(\ep)\quad \text{and} \quad r^\ep(\dtc)=l^{\tau\delta}(\epc).
\end{equation}

Let $\b{E}$ be a lift of $\mc{E}$ in $\CQ$. We pick a general element $\b{M}$ in $\CQ$ of index $\delta$.
Let $\b{g}$ be a general morphism in $\C(\b{E},\Sigma \b{M})$.
We complete $\b{g}$ to a triangle $\Sigma^{-1}\b{E}\to \b{M}\to \b{R} \to \b{E}$, which gives a long exact sequence as in Corollary \ref{C:long}:
	$$\cdots\to \tauh^{-1} \mc{E} \xrightarrow{g_{-1}} M\to R \to E\xrightarrow{g} \tauh \mc{M}\cdots.$$ 
By Lemma \ref{L:gendes} (for void mutation) $\b{g}$ will descend to a general morphism $g\in \Hom_{J}(E, \tauh \mc{M})$.
Recall the $\delta$-vector of $\mc{R}$ is nothing but the index of $\b{R}$. So by Lemma \ref{L:index}
$$\delta_{\mc{R}} = \delta_{\mc{M}} + \delta_{\mc{E}} + \rank(g)B = \delta + \ep + \rank(E,\tau\delta)B = r_{\mc{E}}(\delta).$$
Since $\Sigma^{-1} \b{g}$ is also general in $\mc{C}(\Sigma^{-1}\b{E},\b{M})$, similarly we have that
$$\dtc_{\mc{R}} = \dtc_{\mc{M}} + \dtc_{\mc{E}} - \rank(g_{-1})B = \dtc + \epc - \rank(\tauh^{-1}\mc{E},\delta)B = r^{\mc{E}}(\dtc).$$
Note that this $r_{\mc{E}}(\delta)$ is equal to $\b{r}_{\b{E}}(\delta)$ from Definition \ref{D:rlc}. If $\mc{E}$ is general of weight $\ep$, then $r_{\mc{E}}(\delta)=r_{\ep}(\delta)$ also equals to the $\b{r}_\ep(\delta)$. We have the similar discussion for $l_{\mc{E}}(\delta)$.
We summarize in the following lemma.
\begin{lemma} \label{L:brl=rl} We have that $\b{r}_{\b{E}}(\delta) = r_{\mc{E}}(\delta)$ and $\b{l}_{\b{E}}(\delta) = l_{\mc{E}}(\delta)$;  $\b{r}_{\ep}(\delta) = r_{\ep}(\delta)$ and $\b{l}_{\ep}(\delta) = l_{\ep}(\delta)$.
\end{lemma}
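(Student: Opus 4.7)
The proof is essentially sketched in the paragraph immediately preceding the lemma statement, so my plan is to organize that computation into a clean verification and then handle the remaining case $\b{l}_{\b{E}}(\delta) = l_{\mc{E}}(\delta)$ by the dual argument.

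The plan is to start with a lift $\b{E}\in \CQ$ of $\mc{E}$ and a general object $\b{M}$ of index $\delta$. I complete a general morphism $\b{g}\in \C(\b{E},\Sigma\b{M})$ to a triangle $\Sigma^{-1}\b{E}\to \b{M}\to \b{R}\to \b{E}$; by Definition \ref{D:rlc}, $\b{r}_{\b{E}}(\delta) = \ind \b{R}$. Applying $F=\C(\T,-)$ and using Corollary \ref{C:long} yields the long exact sequence
\[ \cdots \to \tauh^{-1}\mc{E}\xrightarrow{g_{-1}} M \to R \to E \xrightarrow{g} \tauh\mc{M} \to \cdots. \]
The first key step is to invoke Lemma \ref{L:gendes} (with the void mutation sequence) to see that the descent of $\b{g}$ under $F$ is a general morphism $g$ in $\Hom_J(E,\tauh\mc{M})$. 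Since $E$ is a general representation in $\PC(\ep)$ and $\tauh\mc{M}$ is general in $\PC(\tau\delta)$ by Theorem \ref{T:genpi}, the generic rank of $g$ is precisely $\rank(E,\tau\delta)$ by the definition of the general rank in Lemma \ref{L:genrank}.

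The second key step is to apply Palu's index formula (Lemma \ref{L:index}) to the triangle. Identifying indices with weight vectors via Lemma \ref{L:indwt}, the formula reads
\[ \ind \b{R} \;=\; \ind \b{M} + \ind \b{E} + \rank(g)\,B \;=\; \delta + \ep + \rank(E,\tau\delta)\,B, \]
and this is exactly $r_{\mc{E}}(\delta)$ by Definition \ref{D:rlgrank}. Hence $\b{r}_{\b{E}}(\delta) = r_{\mc{E}}(\delta)$. For the equality $\b{l}_{\b{E}}(\delta) = l_{\mc{E}}(\delta)$, I repeat the argument with the triangle $\Sigma^{-1}\b{E}\to \b{L}\to \b{M}\to \b{E}$ associated to a general morphism in $\C(\b{M},\b{E})$; Lemma \ref{L:gendes} again gives a general morphism $M\to E$ whose generic rank is $\rank(\delta,\ep)=\rank(\delta,E)$, and Palu's formula produces $\ind \b{L} = \delta - \epc + \rank(\delta,E)B = l_{\mc{E}}(\delta)$, the sign of $\ep$ coming from the term $\op{coind}\Sigma\b{K}$ in Lemma \ref{L:index} together with the identity $\dtc_{\mc{E}} = \ep$-translate expressed through \eqref{eq:delta2dual}.

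Finally, when $\mc{E}$ is taken general of weight $\ep$, the pair $(M,E)$ can be chosen in an open subset of $\PC(\delta)\times \PC(\ep)$, so by Lemma \ref{L:genrank} the generic rank of $g$ equals $\rank(\ep,\tau\delta)$, and the same calculation gives $\b{r}_\ep(\delta) = r_\ep(\delta)$ and $\b{l}_\ep(\delta) = l_\ep(\delta)$. I do not foresee a genuine obstacle here; the only subtle point is justifying that generality of $\b{g}$ in $\C(\b{E},\Sigma\b{M})$ passes through $F$ to generality of $g$ in $\Hom_J(E,\tauh\mc{M})$, and this is already packaged in Lemma \ref{L:gendes} together with the fact that $F$ factors through a vector-space quotient, hence is an open map.
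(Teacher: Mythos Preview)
Your proposal is correct and follows exactly the argument the paper gives in the paragraph preceding the lemma: apply $F$ to the defining triangle, use Lemma \ref{L:gendes} to ensure the induced map is general so its rank equals $\rank(E,\tau\delta)$ (respectively $\rank(\delta,E)$), and then read off the index via Corollary \ref{C:long}. The only slightly muddled part is your explanation of the $-\epc$ term in the $l$ case; it arises more directly from $\ind(\Sigma^{-1}\b{E}) = \delta_{\tau^{-1}\mc{E}} = -\dtc_{\mc{E}} = -\epc$ (Lemma \ref{L:indwt}) rather than from the coindex term in Palu's formula, but the outcome is the same.
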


Recall the direct sum notation for $\delta$-vectors in the end of Section \ref{S:GP}.
\begin{proposition}\label{P:CDP} For any two weight vectors $\ep_1$ and $\ep_2$ of $(Q,\S)$, the following are equivalent: \begin{enumerate}
		\item $\ep = \ep_1 \oplus \ep_2$;
		\item $r_\ep = r_{\ep_1} r_{\ep_2} = r_{\ep_2} r_{\ep_1}$;
		\item $l_\ep = l_{\ep_1} l_{\ep_2} = l_{\ep_2} l_{\ep_1}$.
	\end{enumerate}
\end{proposition}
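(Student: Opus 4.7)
The plan is to prove (1)$\Leftrightarrow$(2); the equivalence (1)$\Leftrightarrow$(3) will follow by the same argument with $r$ replaced by $l$ throughout, or alternatively by invoking the symmetry $r_\ep(\delta)=l_{\tau\delta}(\ep)$ recorded in \eqref{eq:r2l}.

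For (1)$\Rightarrow$(2), the main tool will be Lemma \ref{L:rle=0}. First I would use that the canonical decomposition $\ep=\ep_1\oplus\ep_2$ means a general decorated representation of weight $\ep$ has the form $\mc{E}=\mc{E}_1\oplus\mc{E}_2$ with $\mc{E}_i$ general of weight $\ep_i$, and moreover $\e(\mc{E}_1,\mc{E}_2)=\e(\mc{E}_2,\mc{E}_1)=0$ (these are the defining vanishings of a canonical direct summand decomposition of presentations). Lifting to $\C$, Corollary \ref{C:e0}(3) converts this vanishing into $\C(\Sigma^{-1}\b{E}_1,\b{E}_2)=\C(\Sigma^{-1}\b{E}_2,\b{E}_1)=0$, so both hypotheses of Lemma \ref{L:rle=0} are met. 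That lemma then delivers $\b{r}_{\b{E}_1\oplus\b{E}_2}=\b{r}_{\b{E}_2}\b{r}_{\b{E}_1}=\b{r}_{\b{E}_1}\b{r}_{\b{E}_2}$. Translating via Lemma \ref{L:brl=rl} gives $r_\ep=r_{\ep_1}r_{\ep_2}=r_{\ep_2}r_{\ep_1}$, which is (2). The implication (1)$\Rightarrow$(3) is obtained by replacing $\b{r}$ with $\b{l}$ in the same argument.

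For (2)$\Rightarrow$(1), the plan is to specialize the operator identity to carefully chosen $\delta$ to force the vanishings characterizing a canonical decomposition. Evaluating $r_\ep=r_{\ep_1}r_{\ep_2}$ at $\delta=0$, using $\rank(\ep,0)=\rank(\ep_2,0)=0$, gives $\ep=\ep_1+\ep_2+\rank(\ep_1,\tau\ep_2)\,B$, and the identity $r_\ep=r_{\ep_2}r_{\ep_1}$ at $\delta=0$ similarly yields $\ep=\ep_1+\ep_2+\rank(\ep_2,\tau\ep_1)\,B$. By Lemma \ref{L:H2E}, $\rank(\ep_i,\tau\ep_j)=\hom(\ep_i,\tau\ep_j)=\e(\ep_j,\ep_i)$, so to conclude $\ep=\ep_1\oplus\ep_2$ it suffices to force both ranks to vanish. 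I would do this by reading the cluster-category construction backwards: the composite $\b{r}_{\b{E}_1}\b{r}_{\b{E}_2}$ produces, from a general $\b{M}$, the iterated cone $\b{R}_{(2),(1)}$ fitting into a nine-diagram as in the proof of Lemma \ref{L:rle=0}; the assumption that $\b{r}_\ep$ is realized by the same two-step procedure for every $\delta$ (and in both orders) forces the morphism $\b{g}:\b{E}\to\Sigma\b{M}$ to split as $(\b{g}_1,\b{g}_2)$ for general $\b{g}_i:\b{E}_i\to\Sigma\b{M}$. Comparing indices in the resulting nine-diagram together with the generality of $\b{M}$ and $\b{g}$ yields $\C(\Sigma^{-1}\b{E}_i,\b{E}_j)=0$ for $i\neq j$, which translates back to $\e(\ep_i,\ep_j)=0$ and to $\ep=\ep_1+\ep_2$.

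The main obstacle is this converse direction. The issue is that equality of operators is a statement about indices, while $\ep=\ep_1\oplus\ep_2$ is a statement about the generic isomorphism class of $\mc{E}$; the symmetric hypothesis (both compositions equal $r_\ep$) and the freedom to evaluate at various $\delta$ must be combined with a careful nine-diagram/triangle analysis in $\C$ to promote the combinatorial identity to a genuine splitting of $\b{E}$. Getting this bridge cleanly, rather than just extracting the single scalar equation $\rank(\ep_1,\tau\ep_2)B=\rank(\ep_2,\tau\ep_1)B$ from $\delta=0$, is where the real work lies.
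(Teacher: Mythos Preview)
Your argument for (1)$\Rightarrow$(2) is the paper's.

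For the converse you are working much too hard, and there is a concrete error along the way. First the error: ``$\rank(\ep_i,\tau\ep_j)=\hom(\ep_i,\tau\ep_j)=\e(\ep_j,\ep_i)$'' equates a dimension vector with two scalars. The correct relation is only that $\rank(\ep_i,\tau\ep_j)=0$ if and only if $\hom(\ep_i,\tau\ep_j)=0$, and the latter equals $\e(\ep_j,\ep_i)$ by Lemma~\ref{L:H2E}. Now the simplification: the paper proves the contrapositive, and your own evaluation at $\delta=0$ is already the whole argument. The symbol $\ep$ in the statement stands for $\ep_1+\ep_2$; if $\ep\neq\ep_1\oplus\ep_2$ then by \cite[Theorem 4.4]{DF} one of $\e(\ep_1,\ep_2),\e(\ep_2,\ep_1)$ is nonzero, say $\e(\ep_2,\ep_1)\neq 0$, hence $\rank(\ep_1,\tau\ep_2)\neq 0$. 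Then $r_\ep(0)=\ep_1+\ep_2$ while $r_{\ep_1}r_{\ep_2}(0)=r_{\ep_1}(\ep_2)=\ep_1+\ep_2+\rank(\ep_1,\tau\ep_2)B$, so $r_\ep\neq r_{\ep_1}r_{\ep_2}$. No nine-diagrams and no splitting of morphisms in $\C$ are needed.

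Your ``main obstacle'' --- that $\delta=0$ yields only the single relation $\rank(\ep_1,\tau\ep_2)B=\rank(\ep_2,\tau\ep_1)B$ --- evaporates once you use $\ep=\ep_1+\ep_2$: each of your two equations separately reads $\rank(\ep_i,\tau\ep_j)B=0$. (Strictly speaking one needs $\gamma B\neq 0$ whenever $\gamma\neq 0$; the paper passes over this, and in any case the frozen-vertex trick after Example~\ref{ex:nonpair} disposes of it.)
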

\begin{proof} We only show the equivalence of (1) and (2). If $\ep = \ep_1 \oplus \ep_2$, then $\e(\ep_1,\ep_2)=\e(\ep_2,\ep_1)=0$ by \cite[Theorem 4.4]{DF}.
By Corollary \ref{C:e0}.(3) we have that $\mc{C}(\b{E}_1,\Sigma\b{E}_2)=\mc{C}(\b{E}_2,\Sigma\b{E}_1)=0$ for $\b{E}_i$ general of index $\ep_i$ ($i=1,2$).
By Lemma \ref{L:rle=0} we have that $\b{r}_{\b{E}_1}\b{r}_{\b{E}_2} = \b{r}_{\b{E}_1\oplus \b{E}_2} = \b{r}_{\b{E}_2}\b{r}_{\b{E}_1}$. By Lemma \ref{L:brl=rl} we get $r_\ep = r_{\ep_1} r_{\ep_2} = r_{\ep_2} r_{\ep_1}$.
	
Conversely, if $\ep \neq \ep_1 \oplus \ep_2$, then at least one of $\e(\ep_1,\ep_2)$ and $\e(\ep_2,\ep_1)$ is nonzero, say $\e(\ep_2,\ep_1)\neq 0$, so $\rank(\ep_1,\tau\ep_2)\neq 0$.
Let $\delta$ be the zero vector, then $r_{\ep_2}(\delta)=\ep_2$ and $r_\ep(\delta) = \ep_1+\ep_2$.
But $r_{\ep_1}(\ep_2) = \ep_1+\ep_2+\rank(\ep_1,\tau\ep_2)B$ with $\rank(\ep_1,\tau\ep_2)\neq 0$. Hence $r_\ep \neq r_{\ep_1} r_{\ep_2}$.
\end{proof}
\noindent This proposition suggest that to study $r_\ep$ and $l_\ep$ it is harmless to assume $\ep$ is {\em indecomposable}, that is, a general presentation of weight $\ep$ is indecomposable.

It is unclear if we can ask $\mc{R}$ to be general of weight $r_{\ep}(\delta)$.
However, this can be done if $r_{\ep}$ can be generically lifted to $\CQ$ at $\delta$ in the sense of Definition \ref{D:liftc}.
By abuse of language, in this case we also say $r_{\ep}$ can be generically lifted at $\delta$.

\begin{lemma}\label{L:rlmu} Suppose that the operator $r_{\ep}$ (or $l_{\ep}$) can be generically lifted at $\delta$. 
Then the operator commutes with any sequence of mutations $\mub$ and $\tau^i$:
\begin{align*} \mub(r_{\ep}(\delta)) &= r_{\mub(\ep)} (\mub(\delta)) \ &{ and }&& \ \mub(l_{\ep}(\delta)) &= l_{\mub(\ep)} (\mub(\delta)); \\
	\tau^i(r_{\ep}(\delta)) &= r_{\tau^i\ep} (\tau^i\delta) \ &{ and }&& \ \tau^i(l_{\ep}(\delta)) &= l_{\tau^i\ep} (\tau^i\delta).
\end{align*}
\end{lemma}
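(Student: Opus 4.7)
The plan is to work inside the cluster category $\C = \CQ$ and use Lemma \ref{L:brl=rl} to realize $r_\ep(\delta)$ and $l_\ep(\delta)$ as indices of objects in suitable triangles. By Definition \ref{D:liftc}, the hypothesis that $r_\ep$ can be generically lifted at $\delta$ says precisely that there is a triangle
$$\Sigma^{-1}\b{E}\to \b{M}\to \b{R}\to \b{E}$$
in $\C$ such that $(\b{M},\b{E})$ is a general pair of index $(\delta,\ep)$, the connecting morphism $\b{g}\colon \b{E}\to \Sigma\b{M}$ is general in $\C(\b{E},\Sigma\b{M})$, and additionally $\b{R}$ is general of index $r_\ep(\delta)$. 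The strategy is then to transport this triangle through the mutation and shift equivalences.

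For the mutation case, I would apply the Keller--Yang triangle equivalence $\br{\mu}_u^-\colon \CQ \to \C_{\mu_u(Q,\S)}$ to the triangle above. Being a triangle equivalence, it produces a new triangle
$$\Sigma^{-1}\br{\mu}_u^-(\b{E})\to \br{\mu}_u^-(\b{M})\to \br{\mu}_u^-(\b{R})\to \br{\mu}_u^-(\b{E})$$
in $\C_{\mu_u(Q,\S)}$. By Theorem \ref{T:twomu} together with Lemma \ref{L:indwt}, the index of each of $\br{\mu}_u^-(\b{M}), \br{\mu}_u^-(\b{E}), \br{\mu}_u^-(\b{R})$ is obtained from the corresponding index in $\C$ by the mutation rule for $\delta$-vectors. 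Moreover, Lemma \ref{L:gendes} guarantees that the composed functor $\wtd{F}'$ sends general objects and general morphisms to general objects and general morphisms; since $\br{\mu}_u^-$ is an equivalence of triangulated categories, genericity of the pair $(\b{M},\b{E})$ is likewise preserved (through the natural bijection on parameterizing varieties induced by the equivalence, combined with Lemma \ref{L:genmu} componentwise). Thus the mutated triangle witnesses that $r_{\mu_u(\ep)}$ can be generically lifted at $\mu_u(\delta)$, and the index of $\br{\mu}_u^-(\b{R})$ is simultaneously equal to $r_{\mu_u(\ep)}(\mu_u(\delta))$ and $\mu_u(r_\ep(\delta))$, proving the identity at a single mutation. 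Iterating on the admissible sequence $\mub$ gives the mutation statement in full.

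For the $\tau^i$ case, I would apply the auto-equivalence $\Sigma^i$ of $\C$ to the original triangle. By Theorem \ref{T:genpi}(2), iterated, the shift $\Sigma$ sends a general object of index $\delta$ to a general object of index $\tau\delta$, and analogously for morphisms between them (the latter because $\Sigma$ is fully faithful, so general morphisms correspond). Hence the shifted triangle $\Sigma^{i-1}\b{E}\to \Sigma^i\b{M}\to \Sigma^i\b{R}\to \Sigma^i\b{E}$ exhibits $r_{\tau^i\ep}$ as generically liftable at $\tau^i\delta$ with value $\tau^i(r_\ep(\delta))$. The argument for $l_\ep$ is entirely symmetric, using instead the triangle $\Sigma^{-1}\b{E}\to \b{L}\to \b{M}\to \b{E}$ of Definition \ref{D:rlc}.

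The main obstacle, which is conceptually minor but needs care, is to verify that the relevant equivalences preserve generality not just of individual objects but also of \emph{pairs} $(\b{M},\b{E})$ together with a general connecting morphism $\b{g}$. For mutations this reduces to Lemma \ref{L:gendes} applied simultaneously to $\b{M}$, $\b{E}$ and to an ambient $\Hom$-space, noting that an equivalence induces bijections on open dense subsets of the algebraic parameter spaces in play; for $\tau$ it reduces to Theorem \ref{T:genpi}(2) applied both to $\b{M}$ and $\b{E}$. Once this bookkeeping is in place, the proof of the lemma is essentially the observation that triangles, indices, and genericity are all simultaneously preserved, so the defining triangle for $r_\ep(\delta)$ transports to the defining triangle for $r_{\mub(\ep)}(\mub(\delta))$ and $r_{\tau^i\ep}(\tau^i\delta)$.
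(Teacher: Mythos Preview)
Your proposal is correct and follows essentially the same approach as the paper: lift the triangle defining $r_\ep(\delta)$ (or $l_\ep(\delta)$) to $\C$, transport it through the triangle equivalence $\br{\mu}_{\b u}^-$ (respectively $\Sigma^i$), and read off the equality of indices using Lemma~\ref{L:gendes} and Theorem~\ref{T:genpi}. The paper carries this out for $l_\ep$ and applies the full sequence $\mub^-$ in one step rather than iterating, but the content is the same; your explicit flagging of the ``general pair'' bookkeeping is a point the paper leaves implicit.
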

\begin{proof} We only prove the statement for $l_\ep$.
By definition, a general object $\b{L}$ of index $\b{l}_\ep(\delta)$ fits into a triangle $\Sigma^{-1}\b{E} \to \b{L}\to\b{M}\xrightarrow{\b{g}} \b{E}$
with the properties given in Definition \ref{D:liftc}. 
Then by Lemma \ref{L:brl=rl} $\b{l}_{\ep}(\delta) = l_{\ep}(\delta)$.
Let $\mub^-: \mc{C}_{Q,\mc{S}} \to \mc{C}_{Q',\mc{S}'}= \mc{C}_{\mub(Q,\mc{S})}$ be the triangle equivalence corresponding to the sequence of mutations.
Apply $\mub^-$ to the above triangle, and we get another triangle $\Sigma^{-1}\b{E}' \to \b{L}'\to\b{M}'\xrightarrow{\b{g}'} \b{E}'$ in $\mc{C}_{Q',\mc{S}'}$.
By Theorem \ref{T:twomu}, $\mc{M}' = \mub(\mc{M})$. Since $\mc{M}$ is general of weight $\delta$, 
$\delta_{\mc{M}'} = \mub(\delta_{\mc{M}}) = \mub(\delta)$ by Lemma \ref{L:genmu}. 
For the same reason, $\delta_{\mc{E}'} = \mub(\ep)$ and $\delta_{\mc{L}'} = \mub(l_\ep(\delta))$.
By Lemma \ref{L:gendes}, the induced map $g': M'\xrightarrow{} E'$ is general,
so $\delta_{\mc{L}'} = l_{\mub(\ep)}(\mub(\delta))$ by definition.
Hence, $\mub(l_\ep(\delta)) = l_{\mub(\ep)}(\mub(\delta))$.

For the statement about $\tau$, let us look at the long exact sequence 
\begin{equation}  \cdots \to \tauh^{-1} \mc{L}\to \tauh^{-1} \mc{M}\xrightarrow{g_{-1}} \tauh^{-1} \mc{E} \xrightarrow{} L\to M\xrightarrow{g} E \xrightarrow{} \tauh \mc{L} \to \tauh \mc{M} \xrightarrow{g_1} \tauh \mc{E} \to  \cdots. 
\end{equation}
As $\mc{L}, \mc{M}$ and $\mc{E}$ are general of weight $l_\ep(\delta),\delta$ and $\ep$, $\tau^i\mc{L}, \tau^i\mc{M}$ and $\tau^i\mc{E}$ are general of weight $\tau^il_\ep(\delta),\tau^i\delta$ and $\tau^i\ep$ by Theorem \ref{T:genpi}.
Since $\b{g}\in \C(\b{M},\b{E})$ is general, so is each $\Sigma^i \b{g}$, and so is each $g_i$ by Lemma \ref{L:gendes}.
Hence $\tau^i\mc{L}$ has weight $l_{\tau^i\ep}(\tau^i\delta)$, and the equality $\tau^i(l_\ep(\delta))=l_{\tau^i\ep}(\tau^i\delta)$ follows.
\end{proof}

\subsection{Sufficient Conditions for the Generic Lifting}
Now we explore some sufficient conditions such that $r_\ep$ and $l_\ep$ can be generically lifted.
\begin{lemma} \label{L:lift233} Any square given by a quotient presentation 
\begin{equation*} \label{eq:quop} \xymatrix{P_-^M \ar[d]_{d_M} \ar@{->>}[r]^{} & P_-^N\ar[d]_{d_N} \\
P_+^M \ar@{->>}[r]^{}  & P_+^N }
\end{equation*}	
with $\coker(d_M) = M$ and $\coker(d_N) = N$, can be lifted to a nine-diagram
	$$\xymatrix{
		\T_{-}^{{\rm L}}  \ar[r]\ar[d]_{\b{d}_{\rm L}} &\T_{-}^{{\rm M}}  \ar[r]\ar[d]_{\b{d}_{\rm M}} & \T_{-}^{{\rm N}} \ar[r]\ar[d]_{\b{d}_{\rm N}} & \Sigma\T_{-}^{{\rm L}}\ar[d]  & \\ 
		\T_{+}^{{\rm L}}  \ar[r]\ar[d] &\T_{+}^{{\rm M}}  \ar[r]\ar[d] & \T_{+}^{{\rm N}} \ar[r]\ar[d] & \Sigma\T_{+}^{{\rm L}}\ar@{-->}[d] & \\ 
		\b{L}  \ar[r] &\b{M} \ar@{-->}[r] & \b{N} \ar@{-->}[r] & \Sigma\b{L} & 
	}$$
	such that each $\b{T}_{\pm}^*\in \add \T$ for $*=\b{L},\b{M}$ or $\b{N}$, and $\T_-^{\rm L} \xrightarrow{\b{d}_{\rm L}} \T_+^{\rm L} \to \b{L}$ lifts the corresponding subpresentation as well.
\end{lemma}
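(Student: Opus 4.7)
The plan is to transport the given data from $\proj J$ into the cluster category $\mc{C}$ via the equivalence $\add\T\cong\proj J$ and then invoke Verdier's $3\times 3$ lemma.

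First I would lift the commuting square $(d_M,d_N)$ to a commuting square in $\add\T\subset\mc{C}$. Since the rows $0\to P_\pm^L\to P_\pm^M\to P_\pm^N\to 0$ are short exact sequences of projective $J$-modules, they split, and under the equivalence $\add\T\cong\proj J$ they correspond to split monomorphisms $\T_\pm^L\hookrightarrow \T_\pm^M$ with cofibres $\T_\pm^N$, all in $\add\T$. Completing these horizontal maps to (split) triangles $\T_\pm^L\to\T_\pm^M\to\T_\pm^N\to\Sigma\T_\pm^L$ in $\mc{C}$, and likewise completing the vertical maps $\b{d}_M$ and $\b{d}_N$ to triangles with cones $\b{M}$ and $\b{N}$, I obtain two triangles filling the columns and two triangles filling the top and middle rows of the desired $3\times 3$ square. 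By the equivalence \eqref{eq:equiv} the cones $\b{M}$ and $\b{N}$ recover the given modules $M$ and $N$ modulo $(\Sigma\T)$.

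Next I would apply Verdier's $3\times 3$ lemma (a standard consequence of the octahedral axiom, available in any triangulated category) to the commuting square formed by $\b{d}_M$, $\b{d}_N$, and the two split horizontal morphisms. The lemma produces an object $\b{L}$ and morphisms completing the diagram so that all three rows and all three columns are triangles in $\mc{C}$. The resulting first row is a triangle $\T_-^{\rm L}\xrightarrow{\b{d}_{\rm L}}\T_+^{\rm L}\to\b{L}\to\Sigma\T_-^{\rm L}$, exhibiting $\b{L}$ as the cone of a morphism between objects of $\add\T$, while the bottom row is the desired triangle $\b{L}\to\b{M}\to\b{N}\to\Sigma\b{L}$.

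Finally, I would check that the morphism $\b{d}_{\rm L}$ thus produced coincides with the restriction of $\b{d}_M$ along $\T_-^{\rm L}\hookrightarrow\T_-^{\rm M}$, and therefore, under $\add\T\cong\proj J$, agrees with the given subpresentation $d_L$. This is forced by the commutativity of the upper-left square of the $3\times 3$ completion, since $d_L$ is characterised in $\proj J$ by the same restriction property. I do not expect any genuine obstacle here: the whole argument is formal once the equivalence is used to set the square up in $\mc{C}$; the mildest subtlety is keeping track of the identifications so that the cones in $\mc{C}$ correctly match the modules $M$, $N$ (and the decorated representation underlying $\b{d}_{\rm L}$) on the nose, and this is handled by working consistently within $\add\T$ before descending to $\mc{C}$.
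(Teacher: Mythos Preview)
Your proposal is correct and follows essentially the same approach as the paper: lift the square to $\add\T$ via the equivalence, note that the horizontal rows become split (contractible) triangles, and then complete to a nine-diagram. The only cosmetic difference is that the paper uses the splitness of the rows to say one may \emph{start} from the prescribed $\b{d}_{\rm L}$ and complete (citing Neeman), whereas you invoke the general $3\times 3$ lemma and then identify the resulting first-column map with the given subpresentation via commutativity of the upper-left square; both arguments are valid and amount to the same thing.
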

\begin{proof} This lemma is a straightforward variation of \cite[Lemma 3.2]{P0}. 
The point is that the quotient presentation together with its subpresentation can be lifted to the upper two rows of the diagram. These rows are split triangles so can be completed to the nine diagram (for any $\b{d}_{\rm L}$) by \cite{N}.
\end{proof}

\begin{lemma}\label{L:rltri} If $\rank(\delta,\ep)=0$, then ${l}_\ep$ can be generically lifted at $\delta$. That is,
a general object $\b{L}$ of index $\delta+\tau^{-1}\ep$ fits into a triangle $\Sigma^{-1}\b{E}\xrightarrow{} \b{L}\xrightarrow{\b{f}} \b{M}\xrightarrow{\b{g}} \b{E}$,
where $(\b{M},\b{E})$ is a general pair of index $(\delta,\ep)$ and $\b{g}$ is a general morphism in $\mc{C}(\b{M},\b{E})$.
\end{lemma}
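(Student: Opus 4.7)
The plan is to combine Theorem~\ref{T:subp} with Lemma~\ref{L:lift233}, exploiting the fact that $\rank(\delta,\ep) = 0$ forces $\hom(\delta,\ep) = 0$.

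First I would translate the hypothesis. The definition of the general rank forces $\Hom_J(M,E) = 0$ for a general pair $(M,E) \in \PC(\delta)\times\PC(\ep)$, since the set of maps of rank $0$ is the singleton $\{0\}$ and this must be open and non-empty in $\Hom_J(M,E)$. Hence $\hom(\delta,\ep) = 0$, and by Lemma~\ref{L:H2E}, $\e(\tau^{-1}\ep, \delta) = \hom(\delta,\ep) = 0$.

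Next, Theorem~\ref{T:subp} applied with $\delta_1 = \tau^{-1}\ep$ and $\delta_2 = \delta$, together with Lemma~\ref{L:homotopy}, yields a short exact sequence of presentations
\begin{equation*} 0 \to d_1 \to d_L \to d_M \to 0 \end{equation*}
where $d_1$ has weight $\tau^{-1}\ep$, $d_M$ has weight $\delta$, and $d_L$ is general of weight $\delta + \tau^{-1}\ep$. By Lemma~\ref{L:sqgen}, I may arrange that $(d_M, d_1)$ is a general pair and the class $\eta \in \E(d_M, d_1)$ is general. Lifting this sequence to a triangle in $\mc{C}$ via Lemma~\ref{L:lift233}, and rotating, produces
\begin{equation*} \Sigma^{-1}\b{E} \to \b{L} \to \b{M} \xrightarrow{\b{g}} \b{E}, \end{equation*}
in which $\b{L}, \b{M}$ are lifts of $d_L, d_M$ and $\Sigma^{-1}\b{E}$ is a lift of $d_1$. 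By Theorem~\ref{T:genpi}, $(\b{M}, \b{E})$ is a general pair of index $(\delta, \ep)$ and $\b{L}$ is a general object of index $\delta + \tau^{-1}\ep$.

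Finally I need to verify that $\b{g}$ is general in $\mc{C}(\b{M}, \b{E})$. By Lemma~\ref{L:EinC}, $\eta \in \E(d_M, d_1)$ is identified with an element of $(\Sigma\T)(\b{M}, \b{E}) \subseteq \mc{C}(\b{M}, \b{E})$, and this element is precisely $\b{g}$. The vanishing $\hom(\delta,\ep) = 0$ forces the quotient $\mc{C}(\b{M}, \b{E})/(\Sigma\T)(\b{M}, \b{E}) \cong \Hom_J(M, E)$ to be trivial, so $\mc{C}(\b{M}, \b{E}) = (\Sigma\T)(\b{M}, \b{E})$, and generality of $\eta$ carries over to generality of $\b{g}$. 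The main technical subtlety I foresee is the identification between lifts of presentations of weight $\tau^{-1}\ep$ and objects of the form $\Sigma^{-1}\b{E}$ with $\b{E}$ general of index $\ep$; this ultimately rests on Theorem~\ref{T:genpi}, after which the rest of the argument is bookkeeping.
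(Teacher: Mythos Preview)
Your proposal is correct and follows essentially the same route as the paper: translate $\rank(\delta,\ep)=0$ into $\e(\tau^{-1}\ep,\delta)=0$, invoke Theorem~\ref{T:subp} and Lemma~\ref{L:sqgen} to obtain a general quotient presentation with general extension class, lift via Lemma~\ref{L:lift233}, and then argue that $\b{g}$ is general because $(\Sigma\T)(\b{M},\b{E})=\mc{C}(\b{M},\b{E})$. Your explicit appeal to Theorem~\ref{T:genpi} to pass from $d_1$ general of weight $\tau^{-1}\ep$ to $\b{E}$ general of index $\ep$ is a point the paper leaves implicit, and your final step using the equivalence $\mc{C}/(\Sigma\T)\cong\mod J$ directly is a slight variant of the paper's use of Corollary~\ref{C:e0}, but neither is a substantive departure.
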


\begin{proof} $\rank(\delta,\ep)=0$ is equivalent to $\hom(\delta,\ep)=0$. So $\e(\tau^{-1}\ep,\delta)=0$.
By Theorem \ref{T:subp} a general presentation $d_{\mc{L}}$ in $\Hom_J(P([-\tau^{-1}\ep]_++[-\delta]_+), P([\tau^{-1}\ep]_++[\delta]_+]))$
has a quotient presentation $d_{\mc{M}}$ of weight $\delta$, and thus a subpresentation $d_1$ of weight $\tau^{-1}\ep$.
Note that the presentation $d_{\mc{L}}$ is homotopy equivalent to a general presentation of weight $l_\ep(\delta) = \delta+\tau^{-1}\ep$ by Lemma \ref{L:homotopy}.
Moreover, by Lemma \ref{L:sqgen} we may assume $d_{\mc{M}}$ and $d_{\tau^{-1}\mc{E}}$ are general as a pair, and
$\eta$ is general in $\E(d_{\mc{M}},d_{\tau^{-1}\mc{E}})$ .
We lift the quotient presentation $d_{\mc{L}} \to d_{\mc{M}}$ to a nine diagram as in Lemma \ref{L:lift233}:
$$\xymatrix{
\T_{-}^{{\rm  \Sigma^{-1}E}}\ar[r]\ar[d]_{\b{d}_{{\rm  \Sigma^{-1}E}}} & \T_{-}^{{\rm L}}  \ar[r]\ar[d]_{\b{d}_{\rm L}} & \T_{-}^{{\rm M}} \ar[r]\ar[d]_{\b{d}_{\rm M}} & \Sigma \T_{-}^{{{\rm  \Sigma^{-1}E}}}\ar[d]_{} \\ 
\T_{+}^{{\rm  \Sigma^{-1}E}}\ar[r]\ar[d] & \T_{+}^{{L}}  \ar[r]\ar[d] & \T_{+}^{{M}} \ar[r]\ar[d] & \Sigma \T_{+}^{{\rm  \Sigma^{-1}E}}\ar@{-->}[d] \\ 
\Sigma^{-1}\b{E}\ar[r] & \b{L} \ar@{-->}[r] & \b{M} \ar@{-->}[r]^{\b{g}} & \b{E} 
}$$
By Construction $\b{L}$ is general of index $l_\ep(\delta)$, $(\b{M},\b{E})$ is a general pair of index $(\delta,\ep)$.
By Lemma \ref{L:EinC} the element $\eta\in \E(d_{\mc{M}}, d_{\tau^{-1}\mc{E}})$ is now identified with a general element $\b{g}$ in $(\Sigma \T)(\b{M}, \b{E})$.
But by Corollary \ref{C:e0} every morphism $\mc{C}(\b{M}, \b{E})$ factors through $\Sigma \T$.
Hence, $\b{g}$ is in fact a general element in $\mc{C}(\b{M}, \b{E})$.
\end{proof}	

\begin{remark}\label{r:rltri} (1). Replacing $\delta$ and $\ep$ by $\ep$ and $\tau\delta$ in Lemma \ref{L:rltri}, we get the following statement:
if $\rank(\ep,\tau\delta)=0$, then $\b{r}_\ep$ can be generically lifted at $\delta$.
	
(2). With a little more effort, we can show that $\b{f}$ can be assumed to general in $\C(\b{L},\b{M})$. The proof will start with taking a general morphism $\b{f}:\b{L}\to \b{M}$,
then by Corollary \ref{C:long} $f$ is surjective. Then the rest of the proof is similar to that of Lemma \ref{L:rltri1} below.
However this by no means says that $\rank(f)$ is the general rank from $l_{\ep}(\delta)$ to $\delta$ because $(\b{L},\b{M})$ as a pair may not be in general positions.
\end{remark}

\begin{lemma}\label{L:rltri1} {\ }\begin{enumerate}
\item	If $\rank(\delta,\ep)=\dv(\ep)$, then $\b{l}_\ep$ can be generically lifted at $\delta$.
That is, a general object $\b{L}$ of index $\delta-\ep$ fits into a triangle $\b{L}\to \b{M}\xrightarrow{\b{g}} \b{E} \xrightarrow{\b{h}} \Sigma\b{L}$, where $(\b{M},\b{E})$ is a general pair of index $(\delta,\ep)$ and $\b{g}$ is a general morphism in $\mc{C}(\b{M},\b{E})$.
\item   If $\rank(\delta,\ep)=\dv(\delta)$, then $\b{l}_\ep$ can be generically lifted at $\delta$. 
\end{enumerate}
\end{lemma}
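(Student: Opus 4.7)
The plan is to mirror the argument of Lemma \ref{L:rltri}: lift, via Lemma \ref{L:lift233}, a suitable short exact sequence of projective presentations to a triangle in the cluster category $\mc{C}=\CQ$.

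For part~(1), the hypothesis $\rank(\delta,\ep)=\dv(\ep)$ says that for generic $(M,E)\in\PC(\delta)\times\PC(\ep)$, a generic morphism $g:M\to E$ is surjective with kernel $L$ of dimension vector $\dv(\delta)-\dv(\ep)$. Given generic data consisting of a presentation $d_{\mc{L}}\in\PHom_J(\delta-\ep)$, a presentation $d_{\mc{E}}\in\PHom_J(\ep)$, and an element $\eta\in\Hom_J(P_-^E,P_+^L)$ representing a generic class in $\E(d_{\mc{E}},d_{\mc{L}})$, the Horseshoe construction
\[
d_{\mc{M}}=\sm{d_{\mc{L}}&\eta\\ 0&d_{\mc{E}}}
\]
assembles these into a short exact sequence of presentations $0\to d_{\mc{L}}\to d_{\mc{M}}\to d_{\mc{E}}\to 0$, where $d_{\mc{M}}$ has weight $\delta$. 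Applying Lemma \ref{L:lift233} to this sequence yields a nine-diagram in $\mc{C}$ whose bottom row is the desired triangle $\b{L}\to\b{M}\xrightarrow{\b{g}}\b{E}\xrightarrow{\b{h}}\Sigma\b{L}$. Since $F\b{g}=g$ is surjective, $\coker F\b{g}=0$, and Palu's formula (Lemma \ref{L:index}) gives $\ind\b{L}=\delta-\ep=l_\ep(\delta)$. By Lemma \ref{L:EinC}, $\eta$ corresponds to a generic element of $(\Sigma\T)(\b{M},\b{E})$, and invoking Corollary \ref{C:e0} together with the extremal-rank hypothesis (as at the close of the proof of Lemma \ref{L:rltri}) upgrades this to genericity of $\b{g}$ in $\mc{C}(\b{M},\b{E})$.

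For part~(2), the hypothesis $\rank(\delta,\ep)=\dv(\delta)$ is dual: a generic $g:M\to E$ is injective with cokernel $N$ of dimension vector $\dv(\ep)-\dv(\delta)$. Horseshoe applied to $0\to M\to E\to N\to 0$ now produces
\[
0\to d_{\mc{M}}\to d_{\mc{E}}\to d_{\mc{N}}\to 0
\]
with $d_{\mc{N}}$ of weight $\ep-\delta$, and Lemma \ref{L:lift233} lifts it to a triangle $\b{M}\to\b{E}\to\b{N}\to\Sigma\b{M}$ in $\mc{C}$. Rotation produces the required triangle with $\b{L}=\Sigma^{-1}\b{N}$. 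Combining Palu's formula with the identity $\ind(\Sigma^{-1}\b{N})=-\op{coind}(\b{N})$ and the $\mc{C}$-analogue of \eqref{eq:delta2dual}, one computes $\ind\b{L}=\delta-\ep+(\dv(\delta)-\dv(\ep))B=l_\ep(\delta)$.

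The main obstacle, common to both parts, is the dominance step: starting from a generic triple in the decomposed parameter space, one must verify that the resulting pair $(d_{\mc{M}},d_{\mc{E}})$ --- respectively $(d_{\mc{M}},d_{\mc{N}})$ --- is generic in the ambient presentation space $\PHom_J(\delta)\times\PHom_J(\ep)$. In Lemma \ref{L:rltri} this was handled via Theorem \ref{T:subp} together with the $\E$-vanishing $\e(\tau^{-1}\ep,\delta)=0$ implied by $\hom(\delta,\ep)=0$. The analogous vanishings here --- $\e(\delta-\ep,\ep)=0$ in part~(1) and $\e(\delta,\ep-\delta)=0$ in part~(2) --- admit no immediate homological reformulation in terms of the hypothesis; they must be extracted directly from the extremal-rank hypothesis, most naturally via a dimension count on the geometric quotients of Lemma \ref{L:gqiso}.
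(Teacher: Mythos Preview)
Your approach reverses the order of the paper's argument, and this reversal creates a genuine gap that you do not close.

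The paper's proof of (1) begins at the other end: take a general pair $(\b{M},\b{E})$ of index $(\delta,\ep)$ and a general $\b{g}\in\mc{C}(\b{M},\b{E})$, complete to a triangle, and then show that the resulting $\b{L}$ is general of index $\delta-\ep$. Since $g=F\b{g}$ is surjective, $h=0$, so $\b{h}$ factors through $\Sigma\T$; using $\mc{C}(\T,\Sigma\T)=0$ one builds a nine-diagram over $\add\T$, and applying $F$ exhibits $d_{\mc{E}}$ as a quotient presentation of $d_{\mc{M}}$. A variant of the argument in Lemma~\ref{L:sqgen} then shows the kernel $d_{\mc{L}}$ is general. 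In this order, the pair-genericity of $(\b{M},\b{E})$ and the genericity of $\b{g}$ are \emph{hypotheses}, not conclusions. Part~(2) is handled dually, working with coindices rather than indices.

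Your argument, by contrast, starts from a general triple $(d_{\mc{L}},d_{\mc{E}},\eta)$ and tries to deduce both that $(\b{M},\b{E})$ is a general pair and that $\b{g}$ is general. The second deduction fails. By Lemma~\ref{L:EinC}, the class $\eta\in\E(d_{\mc{E}},d_{\mc{L}})$ corresponds to an element of $(\Sigma\T)(\b{E},\Sigma\b{L})$, i.e.\ to the connecting morphism $\b{h}$, \emph{not} to $\b{g}\in\mc{C}(\b{M},\b{E})$. And the upgrade via Corollary~\ref{C:e0} cannot work here: for $(\Sigma\T)(\b{M},\b{E})=\mc{C}(\b{M},\b{E})$ one would need $\e(\tau^{-1}\ep,\delta)=0$, equivalently $\hom(\delta,\ep)=0$, which is precisely what the hypothesis $\rank(\delta,\ep)=\dv(\ep)\neq 0$ rules out. (Incidentally, the vanishing $\e(\delta-\ep,\ep)=0$ you were worried about does follow readily: $\rank(\delta,\ep)=\dv(\ep)$ forces general $M$ to have a quotient in $\PC(\ep)$, and the paper's nine-diagram argument lifts this to the presentation level, whence Theorem~\ref{T:subp} applies; see Remark~\ref{r:rltri1}. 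But this only helps with $\b{h}$, not $\b{g}$.)
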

\begin{proof} (1). Take a general pair $(\b{M}, \b{E})$ and a general morphism $\b{g}:\b{M}\to \b{E}$, which descends to a general homomorphism $g:M\to E$ by Lemma \ref{L:gendes}. By the rank condition, $g$ is surjective.
We complete $\b{g}$ to a triangle $\b{L}\to \b{M}\xrightarrow{\b{g}} \b{E} \xrightarrow{\b{h}} \Sigma\b{L}$. By Lemma \ref{L:index} the index of $\b{L}$ is $\delta-\ep$. We need to show that $\b{L}$ may be assumed to be general of index $\delta-\ep$.

Let $\T_-^{\rm E}\xrightarrow{\b{d}_{\rm E}} \T_+^{\rm E} \to \b{E} \to \Sigma \T_-^{\rm E}$ and
$\T_-^{\rm L}\xrightarrow{\b{d}_{\rm L}} \T_+^{\rm L} \to \b{L} \to \Sigma \T_-^{\rm L}$ be two triangles with $\b{d}_{\rm E}\in \THom(\ep)$ and $\b{d}_{\rm L}\in \THom(\delta-\ep)$.
We set $\T_{\pm}^{\rm M} = \T_{\pm}^{\rm E} \oplus \T_{\pm}^{\rm L}$.
As $h=0$, $\b{h}$ must factor through $\Sigma\T$. Since $\C(\T,\Sigma\T)=0$, $\T_+^{\rm E} \to \b{E} \xrightarrow{\b{h}} \Sigma\b{L}$ vanishes, so $\T_+^{\rm E} \to \b{E}$ factors through $\b{M}$.
This gives the following square:
$$\xymatrix{\T_{+}^{{\rm M}}  \ar[r]\ar[d] & \T_{+}^{{\rm E}} \ar[d]   \\ 
\b{M} \ar[r]^{\b{g}} & \b{E}  & 
}$$
which can be completed to a nine-diagram
$$\xymatrix{
	\T_{-}^{\rm L} \ar[r]\ar[d]_{\b{d}_{\rm L}} &\T_{-}^{{\rm M}}  \ar[r]\ar[d]_{\b{d}_{\rm M}} & \T_{-}^{{\rm E}} \ar[r]\ar[d]_{\b{d}_{\rm E}} & \Sigma\T_{-}^{{\rm L}}\ar[d]  & \\ 
	\T_{+}^{\rm L} \ar[r]\ar[d] &\T_{+}^{{\rm M}}  \ar[r]\ar[d] & \T_{+}^{{\rm E}} \ar[r]\ar[d] & \Sigma\T_{+}^{{\rm L}}\ar[d] &   \\ 
	\b{L} \ar[r] &\b{M} \ar[r]^{\b{g}} & \b{E} \ar[r]^{} & \Sigma\b{L} & 
}$$
Apply $\C(\T,-)$ we get the following diagram
\begin{equation*}\xymatrix{
P_-^L \ar@{^(->}[r] \ar[d]_{d_{\mc{L}}} & P_-^M \ar[d]_{d_{\mc{M}}} \ar@{>>}[r]^{\pi_-} & P_-^E\ar[d]_{d_{\mc{E}}} \\
P_+^L \ar@{^(->}[r] \ar@{>>}[d]& P_+^M \ar@{>>}[r]^{\pi_+} \ar@{>>}[d]_{} & P_+^E \ar@{>>}[d]_{} \\
	L \ar[r] & M\ar@{>>}[r]^{g} & E}
\end{equation*}
In particular, we get for a pair of general presentations $d_{\mc{M}}$ and $d_{\mc{E}}$ of weight $\delta$ and $\ep$,
a general morphism from $d_{\mc{M}}$ to $d_{\mc{E}}$ is surjective.
Finally, a similar argument as in the proof of Lemma \ref{L:sqgen} shows that the kernel $d_{\mc{L}}$ of such a morphism is general as well. That is equivalent to say $\b{L}$ may be assumed to be general of index $\delta-\ep$.

(2). Note that if an object $\b{L}$ has index $\b{l}_{\ep}(\delta)=\dtc-\epc$ (by \eqref{eq:le} and \eqref{eq:delta2dual}), then $\Sigma\b{L}$ has coindex $\epc-\dtc$. So here it is convenient to work with the following obvious variant of Definition \ref{D:liftc}:
a general object $\Sigma\b{L}$ of coindex $\epc-\dtc$ fits into a triangle $\b{L}\xrightarrow{\b{f}} \b{M}\xrightarrow{\b{g}} \b{E} \to \Sigma\b{L}$,
where $(\b{M},\b{E})$ is general pair of coindex $(\dtc,\epc)$ and $\b{g}$ is a general morphism in $\mc{C}(\b{M},\b{E})$.
The proof is then similar to (1).
\end{proof}

\begin{remark} \label{r:rltri1} With little more effort, we can show that $\b{h}$ in Lemma \ref{L:rltri1}.(1) is a general morphism in $\mc{C}(\b{E},\Sigma\b{L})$, and $\b{f}$ in (2) is a general morphism in $\mc{C}(\b{L},\b{M})$. To see this, we take $\b{h}$ as an example.
Note that we do not claim that $\b{g}$ and $\b{h}$ are general as a pair, so we can start the proof all over again.
Since $\rank(\delta,\ep)=\dv(\ep)$, a general representation $M$ in $\PC(\delta)$ has a quotient representation in $\PC(\ep)$. So $\e(\delta-\ep,\ep)=0$ by Theorem \ref{T:subp}.
Then the same argument as in Lemma \ref{L:rltri} shows that $\b{h}$ can be assumed to be general.
But $\rank(h)$ may not be the general rank from $\ep$ to $\tau(l_{\ep}(\delta))$ because $(\b{E}, \Sigma\b{L})$ may not be general as a pair.
\end{remark}


\begin{definition} We say $(\delta,\ep)$ has {\em completely extremal rank} if any of the following occurs:
$$\rank(\delta,\ep)=0,\quad  \rank(\delta,\ep)=\dv(\delta),\quad \rank(\delta,\ep)=\dv(\ep).$$
\end{definition}	
	
To summarize what Lemmas \ref{L:rltri} and \ref{L:rltri1} say: if $(\delta,\ep)$ has completely extremal rank, then $l_\ep$ can be generically lifted at $\delta$.
So by Lemma \ref{L:rlmu} these operators commutes with mutations and $\tau^i$. 
The same statement also holds for $r_\ep$ and the other two operators $l^\ep$ and $r^\ep$.
We summarize together in the proposition below.
\begin{proposition} \label{P:rlmu} If the left column has the completely extremal rank, then the right column commutes with any sequence of mutations and $\tau^i$ in the sense of Lemma \ref{L:rlmu}.
	\begin{align*}  &\rank(\ep,\tau\delta)   &&  r_{\ep}(\delta),  \\
		&\rank(\delta,\ep)   &&  l_{\ep}(\delta),  \\
		&\rank(\tau^{-1}\ep,\dtc)   &&  r^{\ep}(\dtc),  \\
		&\rank(\tau^{-1}\dtc, \tau^{-1}\ep)   &&  l^{\ep}(\dtc).  \\
	\end{align*}
\end{proposition}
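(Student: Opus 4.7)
The proposition will follow by assembling the four generic-lifting criteria (Lemma \ref{L:rltri}, Remark \ref{r:rltri}, and the two parts of Lemma \ref{L:rltri1}) and feeding each resulting lift into the commutation principle of Lemma \ref{L:rlmu}. So my plan is to handle the four rows one at a time, reducing each to a generic-lifting statement that is already available.

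First, for the row $\bigl(\rank(\delta,\ep),\ l_\ep(\delta)\bigr)$: the three possibilities $\rank(\delta,\ep)=0$, $\rank(\delta,\ep)=\dv(\delta)$, $\rank(\delta,\ep)=\dv(\ep)$ are exactly the hypotheses of Lemma \ref{L:rltri}, Lemma \ref{L:rltri1}.(2), and Lemma \ref{L:rltri1}.(1) respectively. Each of these asserts that $l_\ep$ can be generically lifted at $\delta$, and Lemma \ref{L:rlmu} immediately gives commutation with any sequence of mutations and any power of $\tau$.

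For the row $\bigl(\rank(\ep,\tau\delta),\ r_\ep(\delta)\bigr)$: by \eqref{eq:r2l} we have $r_\ep(\delta)=l_{\tau\delta}(\ep)$, and the extremal rank hypothesis on $(\ep,\tau\delta)$ is precisely the completely extremal rank hypothesis for the pair to which the previous step applies. Alternatively, Remark \ref{r:rltri}(1) handles the case $\rank(\ep,\tau\delta)=0$ directly, and the two analogues of Lemma \ref{L:rltri1} obtained by swapping the roles of $\delta$ and $\ep$ (and replacing $\delta$ by $\tau\delta$) handle the remaining two cases. In either reading, $r_\ep$ is generically liftable at $\delta$, so Lemma \ref{L:rlmu} again supplies the commutation.

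Finally, the two $\dtc$-rows reduce to the previous two via the identifications of Theorem \ref{T:genpi}: the principal component of $\dtc$ coincides with that of $\delta$, and \eqref{eq:delta2dual} translates rank conditions involving $\dtc$ into rank conditions involving the corresponding $\delta$-vectors. The operators $r^\ep$ and $l^\ep$ are the straightforward injective-presentation analogues of $r_\ep$ and $l_\ep$, and the entire machinery (Lemmas \ref{L:rltri}, \ref{L:rltri1}, \ref{L:rlmu}) dualizes: one replaces $\PHom$ by $\IHom$, uses the dual mutation formula in Lemma \ref{L:gdmu}, and invokes the $\Ec$-version of Lemma \ref{L:EinC}. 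Thus generic liftability of $r^\ep$ (resp.\ $l^\ep$) at $\dtc$ follows from the extremality of $\rank(\tau^{-1}\ep,\dtc)$ (resp.\ $\rank(\tau^{-1}\dtc,\tau^{-1}\ep)$), and the dual of Lemma \ref{L:rlmu} yields commutation.

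The only real obstacle I expect is bookkeeping: making sure the rank extremality on one of the four possible pairs genuinely lines up with the correct generic-lifting hypothesis, and that the dualization to $\dtc$-vectors is indeed mechanical rather than hiding a subtle issue with the $\tau^{\pm 1}$ shifts in the definitions of $r^\ep, l^\ep$. Once those identifications are pinned down, the proof is essentially three lines per row.
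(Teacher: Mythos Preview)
Your proposal is correct and follows the same route as the paper. In fact, the paper gives no formal proof at all: the sentence immediately preceding the proposition already records that Lemmas \ref{L:rltri} and \ref{L:rltri1} give generic liftability of $l_\ep$ under completely extremal rank, Lemma \ref{L:rlmu} then yields commutation, and ``the same statement also holds for $r_\ep$ and the other two operators $l^\ep$ and $r^\ep$'' --- exactly the reductions you spell out.
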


\begin{definition} An {\em extended mutation sequence} is a composition of ordinary mutations $\mu_u$ and the $AR$-translation $\tau$ or its inverse $\tau^{-1}$.
	We also denote $\tau$ and $\tau^{-1}$ by $\mu_+$ and $\mu_-$ respectively, though they are not involutions in general.
\end{definition}
\noindent As $\tau$ commutes with mutations (Lemma \ref{L:taucommu}), we can say the right column commutes with any extended mutation sequence in the conclusion of Proposition \ref{P:rlmu}. 
Moreover, Theorem \ref{T:twomu} can be naturally extended if we take $\Sigma$ as the lifted mutation for $\tau$.

\subsection{The Main Results}	
We have the following exact-sequence incarnation of the two operators.
\begin{theorem} \label{T:rl} Let $\ep$ and $\delta$ be two weight vectors for a Jacobi-finite quiver with potential $(Q,\S)$.
\begin{enumerate}
\item Suppose that there is an extended mutation sequence $\mub$ such that $(\mub(\ep), \mub(\tau\delta))$ has completely extremal rank $r$. Then there is an exact sequence
\begin{equation}\label{eq:rseq} \cdots \to  \tauh^{-1} \mc{M}\xrightarrow{f_{-1}} \tauh^{-1} \mc{R} \xrightarrow{g_{-1}} \tauh^{-1} \mc{E}\xrightarrow{h_{-1}} M\xrightarrow{f_0} R \xrightarrow{g_0} E \xrightarrow{h_0} \tauh \mc{M} \xrightarrow{f_1} \tauh \mc{R} \xrightarrow{g_1} \tauh \mc{E} \xrightarrow{h_1} \tauh^2 \mc{M}\to \cdots,\end{equation}	
where $\mc{R}$ is general of weight $r_\ep(\delta)$, $(\mc{M},\mc{E})$ is general as a pair of weights $\delta$ and $\ep$, and $h_i$ is a general homomorphism in $\Hom_J(\tauh^{i}\mc{E}, \tauh^{i+1} \mc{M})$.
\item Suppose that there is an extended mutation sequence $\mub$ such that $(\mub(\delta), \mub(\ep))$ has completely extremal rank $r$.  Then there is an exact sequence
\begin{equation}\label{eq:lseq} \cdots \to  \tauh^{-1} \mc{L}\xrightarrow{f_{-1}} \tauh^{-1} \mc{M} \xrightarrow{g_{-1}} \tauh^{-1} \mc{E}\xrightarrow{h_{-1}} L\xrightarrow{f_0} M \xrightarrow{g_0} E \xrightarrow{h_0} \tauh \mc{L} \xrightarrow{f_1} \tauh \mc{M} \xrightarrow{g_1} \tauh \mc{E} \xrightarrow{h_1} \tauh^2 \mc{L}\to \cdots,\end{equation}	
where $\mc{L}$ is general of weight $l_\ep(\delta)$, $(\mc{M},\mc{E})$ is general as a pair of weight $\delta$ and $\ep$, and $g_i$ is a general homomorphism in $\Hom_J(\tauh^{i} \mc{M}, \tauh^{i} \mc{E})$.
\end{enumerate}
\end{theorem}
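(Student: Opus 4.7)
The strategy is to produce an appropriate triangle in the cluster category $\C_{Q,\S}$ and harvest the long exact sequence by invoking Corollary \ref{C:long}. I will carry out part (2) in detail; part (1) is dual. Set $\delta'=\mub(\delta)$ and $\ep'=\mub(\ep)$. By hypothesis $(\delta',\ep')$ has completely extremal rank, so Lemmas \ref{L:rltri} and \ref{L:rltri1} furnish a triangle
$$\Sigma^{-1}\b{E}'\to \b{L}'\to \b{M}'\xrightarrow{\b{g}'}\b{E}'$$
in $\C_{\mub(Q,\S)}$ with $\b{L}'$ general of index $l_{\ep'}(\delta')$, $(\b{M}',\b{E}')$ a general pair of index $(\delta',\ep')$, and $\b{g}'$ general in $\C(\b{M}',\b{E}')$.

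Let $\br{\mu}_{\b{u}}^-$ be the triangle equivalence associated to $\mub$, interpreted as the composition of Keller--Yang equivalences $\br{\mu}_u^-$ and $\Sigma^{\pm1}$ (as noted after Proposition \ref{P:rlmu}, $\Sigma^{\pm1}$ is the lift of $\tau^{\pm1}$). Apply its inverse to pull the triangle back to
$$\Sigma^{-1}\b{E}\to \b{L}\to \b{M}\xrightarrow{\b{g}}\b{E}$$
in $\C_{Q,\S}$. The identity $\mub(l_\ep(\delta))=l_{\ep'}(\delta')$ established in the proof of Lemma \ref{L:rlmu}, together with the bijection on principal components induced by $\mub$ (Theorem \ref{T:twomu} combined with Lemma \ref{L:genmu}) and the fact that a triangle equivalence is an isomorphism on Hom-spaces (hence maps open subsets to open subsets), transports the genericity from the primed to the unprimed side: $\b{L}$ is general of index $l_\ep(\delta)$, $(\b{M},\b{E})$ is a general pair of index $(\delta,\ep)$, and $\b{g}$ is general in $\C(\b{M},\b{E})$.

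To assemble \eqref{eq:lseq}, rotate the triangle to $\b{L}\to\b{M}\xrightarrow{\b{g}}\b{E}\to\Sigma\b{L}$ and apply Corollary \ref{C:long}, which yields the long exact sequence directly with $g_0:M\to E$ induced by $\b{g}$. For each $i$, the morphism $g_i:\tauh^i\mc{M}\to\tauh^i\mc{E}$ is the image of $\Sigma^i\b{g}$ under $F=\C(\T,-)$. Since $\Sigma^i$ is an autoequivalence and $\b{g}$ is general in $\C(\b{M},\b{E})$, each $\Sigma^i\b{g}$ is general in $\C(\Sigma^i\b{M},\Sigma^i\b{E})$; the openness of the projection $\C(\Sigma^i\b{M},\Sigma^i\b{E})\twoheadrightarrow\Hom_J(\tauh^i\mc{M},\tauh^i\mc{E})$, which is exactly the content of Lemma \ref{L:gendes} applied with the empty mutation sequence, then makes $g_i$ general in $\Hom_J(\tauh^i\mc{M},\tauh^i\mc{E})$. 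Part (1) is identical, starting from the triangle $\Sigma^{-1}\b{E}\to\b{M}\to\b{R}\to\b{E}$ provided by Remark \ref{r:rltri} in the $\rank(\ep,\tau\delta)=0$ extremal case and the corresponding analogues of Lemma \ref{L:rltri1} in the $\dv$ extremal cases, then rotated to $\b{M}\to\b{R}\to\b{E}\xrightarrow{\b{h}}\Sigma\b{M}$ so that $h_i=F(\Sigma^i\b{h})$ sits in $\Hom_J(\tauh^i\mc{E},\tauh^{i+1}\mc{M})$.

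The main obstacle is the careful bookkeeping of genericity through two transports: first, pulling back via $(\br{\mu}_{\b{u}}^-)^{-1}$ must preserve generality of each object $\b{M},\b{E},\b{L}$, of the pair $(\b{M},\b{E})$, and of every shift $\Sigma^i\b{g}$ individually; second, each such morphism must descend under $F$ to a morphism generic in its own Hom-space simultaneously for all $i$. Both points reduce to the openness statements underlying Lemma \ref{L:gendes}, so no additional geometric input is required beyond what has already been developed.
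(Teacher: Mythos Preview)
Your proof is correct and follows essentially the same route as the paper: construct the generically lifted triangle on the mutated side via Lemmas \ref{L:rltri} and \ref{L:rltri1}, transport it back through the triangle equivalence (extended by $\Sigma^{\pm1}$ for the $\tau^{\pm1}$ steps), and then apply Corollary \ref{C:long} together with Lemma \ref{L:gendes} to obtain the long exact sequence with the required genericity. Your more explicit bookkeeping of how genericity of objects, pairs, and morphisms survives both the pullback and the descent under $F$ is a welcome elaboration of what the paper leaves implicit.
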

\begin{proof} We will prove the statement (2) only because (1) can be proved in a similar fashion.	
After application of the extended mutation sequence, we are in one of the three situations of Lemmas \ref{L:rltri} and \ref{L:rltri1}. So we get a triangle $\b{L}'\xrightarrow{\b{f}'} \b{M}'\xrightarrow{\b{g}'} \b{E}' \xrightarrow{\b{h}'} \Sigma\b{L}'$ in $\C_{Q',\mc{S}'}$, which generically lift $l_{\ep'}$ at $\delta'$ where $\delta'=\mub(\delta)=\ind \b{M}'$ and $\ep'=\mub(\ep)=\ind \b{E}'$.

By Theorem \ref{T:twomu} the extended mutation sequence corresponds to a sequence of triangle equivalences. Under this equivalence, we get a triangle $\b{L}\xrightarrow{\b{f}} \b{M}\xrightarrow{\b{g}} \b{E} \xrightarrow{\b{h}} \Sigma\b{L}$ with $\b{L}$ general of index $l_\ep(\delta)$, $(\b{M},\b{E})$ is a general pair of indices $(\delta,\ep)$, and $\b{g}$ is general in $\mc{C}(\b{M},\b{E})$.
Apply the functor $F=\C(\T,-)$ to this triangle, we get the desired long exact sequence \eqref{eq:lseq} with desired generic condition by Lemma \ref{L:gendes}.
\end{proof}

\begin{remark} \label{r:genhom} (1). In fact we can also say something about $f_i$ and $h_i$ in \eqref{eq:lseq}. If $r=0$ or $\dv(\mub(\delta))$, then we may assume that $f_i$ is general in $\Hom_J(\tauh^i \mc{L}, \tauh^{i}\mc{M})$; if $r=\dv(\mub(\ep))$, then we may assume that $h_i$ is general in $\Hom_J(\tauh^i\mc{E}, \tauh^{i+1}\mc{L})$ (see Remarks \ref{r:rltri} and \ref{r:rltri1}).
As remarked there, the ranks of those morphisms may be greater than the general ranks between the corresponding principal components (see Example \ref{ex:nonpair} below).

(2). In view of Conjecture \ref{c:cer} below, the assumption of existence of such a mutation sequence may not be necessary.

\end{remark}

\begin{example} If $\delta = [\ep]_+$, then $l_\ep(\delta) = [\ep]_+-\epc+\rank([\ep]_+,\ep)B = [\ep]_+-\ep = [-\ep]_+$, and the projective presentation of $E$ is a part of the above sequence.
	Similarly we have that $r_\ep([-\ep]_+) = [\ep]_+$.
\end{example}

\begin{example}\label{ex:nonpair} In general, we cannot assume $(M,R)$ and $(R,E)$ are general pairs for (1); and cannot assume $(L,M)$ and $(E,L)$ are general pairs for (2).
Consider the $3$-arrow Kronecker quiver $\Kronthree{1}{2}$. Let $\delta=(0,1)$ and $\ep=(1,-2)$.
It is easy to check that $\ext(\delta,\ep)=0$ so $r_\ep(\delta)=\delta+\ep=(1,-1)$.
If $(R,E)$ were a general pair, then $\rank(g_0)=\rank(r_\ep(\delta),\ep)=(0,0)$, and the sequence \eqref{eq:rseq} could not be exact at $E$.
We know that a general representation $R$ of weight $(1,-1)$ has a quotient representation $E$ of weight $(1,-2)$ but it is intuitively clear that $(R,E)$ is not a general pair.
	
	
\end{example}

Combining Proposition \ref{P:rlmu} and Lemma \ref{L:rlmu}, we find a way to compute the general rank from $\delta$ to $\ep$ for nondegenerate quivers with potentials.
We will see that the method works effectively as long as one of $\delta$ and $\ep$ is reachable, and works to some extent if Conjecture \ref{c:cer} is true.
Before the invention of Derksen-Weyman-Zelevinsky's mutation, the calculation of $\rank(\delta,\ep)$ seemed completely out of reach even for acyclic quivers. 

Before we state the method, we describe a ``frozen-vertex" trick. An {\em ice quiver} is a quiver with a set of special vertices, called frozen vertices,
which are forbidden to mutate.
The extended $B$-matrix of an ice quiver $Q$ is the submatrix of the $B$-matrix of $Q$ (viewed as an ordinary quiver) given by the rows indexed by the mutable vertices.
For any quiver $Q$, we can always add to $Q$ some frozen vertices $V$ together with some arrows from $V$ to $Q_0$ such that the extended $B$-matrix of the new quiver $\wtd{Q}$ has full rank.
Moreover, for each weight vector $\delta$ of $(Q,\mc{S})$, a general representation of $(\wtd{Q},\mc{S})$ of weight $\wtd{\delta}=(\delta,0,\dots,0)$ is the extension by zeros of a general representation of weight $\delta$.
In particular, $\rank(\wtd{\delta}, \wtd{\ep}) = (\rank({\delta}, {\ep}),0,\dots,0)$.
A standard way to achieve this is that for each vertex $u\in Q_0$ we add a frozen vertex $u'$ with an arrow from $u'$ to $u$.

To find $\rank(\delta,\ep)$ we follow the three steps.\\
Step 0: If $B_Q$ has full rank then go to Step 1. Otherwise we apply the above frozen vertex trick. To ease our notation below, we will denote $\wtd{Q}$, $\wtd{\delta}$ and $\wtd{\ep}$ still by 
$Q$, $\delta$ and $\ep$.
\\
Step 1:	Find a sequence of mutations $\mub$ such that $r=\rank(\mub(\delta),\mub(\ep))$ is completely extremal. Conjecturally such a sequence of mutations always exists.\\
Step 2. According to Proposition \ref{P:rlmu} and \eqref{eq:delta2dual} we have that $$l_{\ep}(\delta) = \begin{cases} \mub^{-1}(\delta'-\epc') & \text{if $r=0$;}  \\
	\mub^{-1}(\delta'-\ep') & \text{if $r=\dv(\ep')$;} \\
	\mub^{-1}(\dtc'-\epc') & \text{if $r=\dv(\delta')$,}
\end{cases}$$
where $\delta'=\mub(\delta)$ and $\ep'=\mub(\ep)$.
Finally we have that $\rank(\delta,\ep)$ is the unique vector $r\in \mb{Z}^{Q_0}$ such that
\begin{equation}\label{eq:r} r B_Q = l_\ep(\delta)-\delta+\epc. \end{equation}
It is unclear if the sequence in Step 1 always exists.
Based on numerous examples we worked out, we conjecture the existence of such a sequence (see Conjecture \ref{c:cer}).
This method will be improved in Theorem \ref{T:murank} where a direct mutation formula is given.

\begin{conjecture} \label{c:cer} Let $(Q,\S)$ be a nondegenerate Jacobi-finite QP. For any pair $(\delta,\ep)$ of $\delta$-vectors of $(Q,\S)$, there is a sequence of mutations $\mub$ such that $(\mub(\delta),\mub(\ep))$ has completely extremal rank.
\end{conjecture}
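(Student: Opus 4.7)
The plan is to induct on a complexity measure of the pair $(\delta,\ep)$---natural candidates include $\dim\PHom_J(\delta)+\dim\PHom_J(\ep)$ or the total dimension $|\dv(\delta)|+|\dv(\ep)|$---using extended mutation sequences to reduce to two easy base cases. The first base case is when some $\mub(\ep)$ (or $\mub(\delta)$) is nonpositive coordinatewise: then the associated decorated representation is purely negative, $\Hom_J$ vanishes, and $\rank(\mub(\delta),\mub(\ep))=0$ trivially. The second base case is when $\mub(\ep)$ is a standard basis vector $e_u$, so that the corresponding representation is the indecomposable projective $P_u$ and extremality can be read off from $\hom(\mub(\delta),P_u)$ directly. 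A preliminary reduction via Proposition \ref{P:CDP} allows us to assume both $\delta$ and $\ep$ are indecomposable.

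For the inductive step I would work inside the cluster category $\C_{Q,\mc{S}}$. A general pair $(\b{M},\b{E})$ of index $(\delta,\ep)$ together with a general morphism $\b{g}\in\C(\b{M},\b{E})$ encodes the data, and mutation acts through the triangle equivalence $\br{\mu}_u^-$ of Theorem \ref{T:twomu}, sending $\b{g}$ to a general morphism $\br{\mu}_u^-(\b{g})$. Completely extremal rank at $(\mu_u(\delta),\mu_u(\ep))$ translates into one of three categorical conditions on the image of $\br{\mu}_u^-(\b{g})$ modulo $(\Sigma\T')$: that it vanishes, that it is surjective, or that its kernel is trivial. The task is then to find, for any non-extremal pair, a vertex $u$ (or a short sequence, possibly mixed with $\tau^{\pm 1}$ via Lemma \ref{L:taucommu}) under which one of these three conditions becomes satisfied, or under which the complexity measure strictly decreases.

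The hard part will be a circularity in our main tools: Theorem \ref{T:intro2} tells us how $\rank$ transforms under $\mu_u$ only when $r_\ep$ or $l_\ep$ lifts generically at $\delta$, which is precisely the conclusion the conjecture asserts is reachable somewhere in the orbit. Consequently we cannot use Theorem \ref{T:intro2} to track $\rank$ along a candidate search path. A proof will likely require independent control of how the image in $\Hom_{J'}(M',E')$ of $\br{\mu}_u^-(\b{g})$ compares to the image of $\b{g}$ in $\Hom_J(M,E)$---in effect, a description of the $(\Sigma\T)$-subspace being killed at each mutation step. The invariants recorded in Lemma \ref{L:HEmu}, together with $h_l$ and $e_r$ of Proposition \ref{P:muinv}, should impose enough rigidity along orbits to make the search tractable in principle; but assembling these constraints into a genuine termination argument---especially excluding infinite ``non-extremal'' orbits in the nondegenerate Jacobi-finite case---is where I expect the bulk of the difficulty to lie.
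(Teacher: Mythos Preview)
The statement you are trying to prove is labeled \emph{Conjecture} in the paper, and the paper does not provide a proof of it. On the contrary, the author writes ``It is unclear if the sequence in Step 1 always exists. Based on numerous examples we worked out, we conjecture the existence of such a sequence'' and later ``We are not completely confident about this conjecture.'' So there is no proof in the paper to compare your proposal against: you are attempting to settle an open problem.

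As for the proposal itself, you have essentially written an honest outline of obstacles rather than a proof, and you correctly identify the main one yourself: the mutation formula for $\rank$ (Theorem~\ref{T:murank}) is only available \emph{after} one knows $l_\ep$ or $r_\ep$ lifts generically, which is exactly what the conjecture would supply. A couple of further points deserve mention. First, the reduction to indecomposable $\ep$ via Proposition~\ref{P:CDP} is not as clean as you suggest: that proposition factors the \emph{operators} $r_\ep,l_\ep$ through the summands, but the conjecture asks for a single mutation sequence $\mub$ making $(\mub(\delta),\mub(\ep))$ extremal, and there is no reason the sequences that work for $(\delta,\ep_1)$ and $(\delta,\ep_2)$ should coincide or combine. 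Second, neither of your proposed complexity measures is mutation-invariant or known to be monotone along any mutation direction, so there is no mechanism forcing the induction to terminate; this is precisely the ``excluding infinite non-extremal orbits'' issue you flag at the end. Finally, your second base case (some $\mub(\ep)=e_u$) does not obviously give extremal rank: $E\cong P_u$ is rigid, but a general morphism $M\to P_u$ need not be zero, injective, or surjective in general. What \emph{is} true in the paper is that when $\ep$ (or $\delta$) is extended-reachable the conjecture holds trivially, since one can mutate $\ep$ to a negative vector and get $\hom=0$; your first base case captures this, but getting beyond it is the whole content of the conjecture.
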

\noindent We are not completely confident about this conjecture. But we are quite confident about a related conjecture.
\begin{conjecture} \label{c:hev} Let $(Q,\S)$ be a nondegenerate Jacobi-finite QP. For any pair $(\delta,\ep)$ of $\delta$-vectors of $(Q,\S)$, there is a sequence of mutations $\mub$ such that $(\mub(\delta),\mub(\ep))$ is either $\hom$-vanishing or $\e$-vanishing.
\end{conjecture}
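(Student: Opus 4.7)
The plan is to work inside the $2$-Calabi-Yau cluster category $\CQ$ and exploit the triangle equivalence from Theorem~\ref{T:twomu}: fix lifts $\b{M},\b{E}\in\C$ of general objects of indices $\delta$ and $\ep$, and observe that running a mutation sequence $\mub$ on the QP is equivalent to keeping the pair $(\b{M},\b{E})$ fixed in $\C$ while replacing the cluster-tilting object $\T$ by $\T'=\mub(\T)$. Under the equivalence $\C/(\Sigma\T')\cong\mod J_{\T'}$ together with Lemma~\ref{L:EinC}, hom-vanishing for the mutated pair means every $f\in\C(\b{M},\b{E})$ factors through $\add\Sigma\T'$, while e-vanishing means no nonzero $g\in\C(\b{M},\Sigma\b{E})$ does. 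Since $\dim_k\C(\b{M},\b{E})$ and $\dim_k\C(\b{M},\Sigma\b{E})$ are fixed invariants of the pair in $\C$, the conjecture asks for a reachable cluster-tilting object that ``absorbs'' one of these two $\Hom$-spaces completely, or dually fails to absorb any nonzero part of the other.

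I would then proceed by induction on the complexity $c(\delta,\ep):=\min\bigl(\hom(\delta,\ep),\ \e(\delta,\ep)\bigr)$; the base case $c=0$ is the conclusion. When $\delta$ or $\ep$ is negative-reachable one short-circuits the induction by mutating to a purely decorated representation, forcing $\hom=0$. Otherwise, specializing Lemma~\ref{L:HEmu} to general presentations via $\beta_\pm=[\pm\delta]_+$ from Lemma~\ref{L:homotopy} yields
\begin{align*}
\hom(\mu_u\delta,\mu_u\ep)-\hom(\delta,\ep) &= [-\delta(u)]_+[-\epc(u)]_+-[\delta(u)]_+[\epc(u)]_+, \\
\e(\mu_u\delta,\mu_u\ep)-\e(\delta,\ep) &= [\delta(u)]_+[-\ep(u)]_+-[-\delta(u)]_+[\ep(u)]_+,
\end{align*}
so a mutation at a vertex with $\delta(u),\epc(u)>0$ strictly decreases $\hom$, while one with $\delta(u)<0$ and $\ep(u)>0$ strictly decreases $\e$. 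The plan is to mutate greedily at such vertices, prioritising whichever of $\hom$ or $\e$ is smaller, until one reaches zero.

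The main obstacle is the possibility of a seesaw: each $\hom$-decreasing mutation may push $\e$ upwards and vice versa, so one must rule out an infinite oscillation that keeps $c$ positive. Ruling this out seems to require a global monovariant that simultaneously controls both quantities. Three ingredients I would combine toward this: (i) the mutation-invariance of $\dim_k\C(\b{M},\b{E})+\dim_k\C(\b{M},\Sigma\b{E})$, which bounds the total possible excursions of $\hom$ and $\e$ along the orbit; (ii) Proposition~\ref{P:CDP} together with the canonical decomposition of $\delta$ and $\ep$, reducing to the case where both are indecomposable; and (iii) a pigeonhole argument using Jacobi-finiteness, since only finitely many $\delta$-vectors arise with bounded presentation size. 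A complementary tropical route, compatible with the Appendix discussion, is to interpret hom- or e-vanishing as the index pair lying on a codimension-one face of the generic $F$-polynomial's support fan, and to argue that reachable cluster chambers tessellate enough of weight space to always hit such a face.
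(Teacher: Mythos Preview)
The statement you are attempting to prove is labelled \emph{Conjecture} in the paper, and the paper offers no proof of it; immediately after stating it the author writes that he is ``not completely confident about this conjecture.'' So there is no paper proof to compare against: you are attacking an open problem.

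Your proposal does not close the gap. The categorical reformulation in the first paragraph is correct and well-observed, and the mutation identities you extract from Lemma~\ref{L:HEmu} are right. The difficulty is entirely in the termination argument, and none of your three proposed fixes works. For (i), the quantities $\dim_k\C(\b{M},\b{E})=\hom(\delta,\ep)+\hom(\ep,\delta)$ and $\dim_k\C(\b{M},\Sigma\b{E})=\e(\delta,\ep)+\e(\ep,\delta)$ are indeed mutation-invariant, but a fixed ceiling does not prevent $\min(\hom,\e)$ from staying positive forever: the seesaw can oscillate indefinitely beneath that ceiling. For (ii), reducing to indecomposable $\delta$ and $\ep$ via Proposition~\ref{P:CDP} does not change the nature of the problem. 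The fatal gap is (iii): Jacobi-finiteness says only that $\dim_k J<\infty$; it places no bound on the $\delta$-vectors occurring in a mutation orbit. For any QP of non-finite mutation type the orbit $\{\mub(\delta)\}$ is infinite and the entries grow without bound, so no pigeonhole is available. Your greedy step also tacitly assumes that a vertex with the required sign pattern always exists, which is not guaranteed either.

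The alternative ``tropical route'' in your last sentence is essentially a restatement of the conjecture: the generic pairing conjecture of the Appendix is \emph{deduced from} Conjecture~\ref{c:hev}, not the other way around, so invoking $F$-polynomial fans would be circular.
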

\noindent Clearly $\rank(\delta,\ep)=0$ is equivalent to $\hom(\delta,\ep)=0$.
We suspect that $\rank(\delta,\ep)=\dv(\ep)$ might imply $\e(\delta,\ep)=0$ so that Conjecture \ref{c:cer} implies Conjecture \ref{c:hev}.
At least the implication holds for acyclic quivers. This follows from the fact that
$$\ext_Q(\alpha,\beta) = \ext_Q(\alpha,\beta-\gamma) = \ext_Q(\alpha-\gamma,\beta),$$
where $\gamma$ is the general rank from $\rep_\alpha(Q)$ to $\rep_\beta(Q)$.
This fact was proved in the proof of \cite[Theorem 5.4]{S}.
\begin{question} \label{q:var} For a nondegenerate Jacobi-finite QP, do we have that $\e(\delta, \ep) = \e(\delta, \check{N}_l)$ where $\check{N}_l$ is the cokernel of a general morphism $\delta\to \ep$?
\end{question}
\noindent It is easy to see that the answer is positive if $\ep$ is rigid.
It is also easy to show that $\rank(\delta,\ep)=\dv(\ep)$ implies $\e(\delta,\ep)=0$ if $\ep$ is rigid.


\begin{example} \label{ex:acyclic} Let $Q$ be the quiver $\twoone{1}{2}{3}$, and $\alpha,\beta$ be the dimension vectors $(6,9,8)$ and $(3,5,2)$.
The $B$-matrix is not of full rank so we add a frozen vertex $3'$ with an arrow from $3'$ to $3$.
Then we replace $\alpha$ by $(6,9,8,0)$ and $\beta$ by $(3,5,2,0)$.
By multiplying the Euler matrix of the quiver, we find that 
$\PC(\delta) = \rep_\alpha(Q)$ and $\PC(\epc) = \rep_\beta(Q)$
for $\delta=(6,-3,-1,0)$  and $\epc=(-7,3,2,-2)$.

We notice that $\delta$ is negative reachable by the sequence $\b{u}=(3,2,1,2,1,3)$ of mutations.
We find that $\delta'= -e_3$ and $\epc' = (1,2,-1,0)$, so $l_\ep(\delta) = \mu_{\b{u}}^{-1}(-1,-2,0,0) = (7,-4,0,0)$.
Hence the general rank from $\rep_\alpha(Q)$ to $\rep_\beta(Q)$ is $(2,3,2)$.
\end{example}

\begin{example} The following quiver cannot be mutated to an acyclic quiver
	$$\cyclicfourone{1}{2}{4}{3}{a}{b}{c}$$
We put the potential $\S=abc$.	
Let us compute the general rank from $\delta=(-1,0,-2,3)$ to $\epc=(1,4,1,-9)$.
Using an algorithm in \cite{Ft} we find $\dv(\delta)=(3,5,4,3)$ and $\dv(\epc)=(5,5,2,3)$.
As $\delta(\dv(\delta))$ and $\ep(\dv(\ep))$ are not positive, $\delta$ and $\epc$ are not rigid (in particular not extended-reachable).
However, we can apply the sequence of mutations $\b{u}=(2,1,4,2,3)$ to them such that $(\delta',\epc'):=(\mub(\delta),\mub(\epc))$ is hom-vanishing.
To see this, we find by Lemma \ref{L:gdmu} that
$\delta' = (0,3,-2,-1)$, $\epc' = (-1,-1,-1,1)$, and $\dv(\epc') = (1,0,1,1)$.
Since $[\delta']_+(\dv(\epc')) = 0$, we see that $\hom(\delta',\epc')=0$.
So $\mub(l_\ep(\delta)) = \delta' - \epc' = (1,4,-1,-2)$.
Apply the mutation backward, and we find $l_\ep(\delta) = (-3,-1,1,5)$.
Finally by \eqref{eq:r} we get $\rank(\delta,\epc) = (2,4,1,2)$.
\end{example}

If $\ep$ is extended-reachable, the rank condition in Theorem \ref{T:rl} is trivially satisfied. In addition, the rigidity of $\ep$ makes the situation particularly nice. The following two corollaries will play a crucial role in \cite{Fc}.
\begin{corollary} \label{C:rlrigid} Assume that $\ep$ is extended-reachable. Then \begin{enumerate}
	\item we may assume that both $(R,E)$ and $(E,M)$ are general pairs in the sequence
\begin{equation*}\cdots \to  \tauh^{-1} \mc{M}\xrightarrow{f_{-1}} \tauh^{-1} \mc{R} \xrightarrow{g_{-1}} \tauh^{-1} \mc{E}\xrightarrow{h_{-1}} M\xrightarrow{f_0} R \xrightarrow{g_0} E \xrightarrow{h_0} \tauh \mc{M} \xrightarrow{f_1} \tauh \mc{R} \xrightarrow{g_1} \tauh \mc{E} \xrightarrow{h_1} \tauh^2 \mc{M}\to \cdots.\end{equation*}
In particular, the rank of each $g_i$ is the general rank from $\tau^{i}(r_\ep(\delta))$ to $\tau^{i}\ep$.
	Moreover, we have that \begin{align*}
	r_{\ep}(\delta) = \delta + \epc - \rank(g_0) B\ &\text{ and }\ 
		\hom_J(r_{\ep}(\delta), \ep) = \hom_J(\delta, \ep) + \epc(\rank(g_0)),\\
	r_{\ep}(\delta) = \delta + \ep + \rank(h_0) B\ &\text{ and }\  
		\e_J(r_{\ep}(\delta),\ep) = \e_J(\delta, \ep)  - \ep(\rank(h_0)),\\
	r^{\ep}(\dtc) = \dtc - \tau^{-1}\ep - \rank(h_{-1}) B\ &\text{ and }\  
		\hom_J(\tau^{-1}\ep, r^{\ep}(\dtc)) = \hom_J(\tau^{-1}\ep, \dtc)  - \tau^{-1}\ep(\rank(h_{-1})),\\
	r^{\ep}(\dtc) = \dtc - \tau^{-1}\epc + \rank(g_{-1}) B\ &\text{ and }\  
		\ec_J(\tau^{-1}\ep, r^{\ep}(\dtc)) = \ec_J(\tau^{-1}\ep, \dtc)  + \tau^{-1}\epc(\rank(g_{-1})).	
	\end{align*}		
	\item we may assume that both $(M,E)$ and $(E,L)$ are general pairs in the sequence
\begin{equation*}\cdots \to  \tauh^{-1} \mc{L}\xrightarrow{f_{-1}} \tauh^{-1} \mc{M} \xrightarrow{g_{-1}} \tauh^{-1} \mc{E}\xrightarrow{h_{-1}} {L}\xrightarrow{f_0} M \xrightarrow{g_0} E \xrightarrow{h_0} \tauh \mc{L} \xrightarrow{f_1} \tauh \mc{M} \xrightarrow{g_1} \tauh \mc{E} \xrightarrow{h_1} \tauh^2 \mc{L}\to \cdots.\end{equation*}
In particular, the rank of each $h_i$ is the general rank from $\tau^{i}\ep$ to $\tau^{i+1}(l_\ep(\delta))$.
	Moreover, we have that \begin{align*}
	l_{\ep}(\delta) = \delta - \epc + \rank(g_0) B\ &\text{ and }\ 
		\hom_J(l_{\ep}(\delta),\ep) = \hom_J(\delta,\ep) - \epc(\rank(g_0)),\\
	l_{\ep}(\delta) = \delta - \ep - \rank(h_0) B\ &\text{ and }\  
		\e_J(l_{\ep}(\delta),\ep) = \e_J(\delta,\ep)  + \ep(\rank(h_0)),\\
	l^{\ep}(\dtc) = \dtc + \tau^{-1}\ep + \rank(h_{-1}) B\ &\text{ and }\  
		\hom_J(\tau^{-1}\ep, l^{\ep}(\dtc)) = \hom_J(\tau^{-1}\ep, \dtc)  + \tau^{-1}\ep(\rank(h_{-1})),\\
	l^{\ep}(\dtc) = \dtc + \tau^{-1}\epc - \rank(g_{-1}) B\ &\text{ and }\  
		\ec_J(\tau^{-1}\ep, l^{\ep}(\dtc)) = \ec_J(\tau^{-1}\ep, \dtc)  - \tau^{-1}\epc(\rank(g_{-1})).
	\end{align*}
\end{enumerate}
\end{corollary}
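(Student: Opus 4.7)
First I would verify the hypothesis of Theorem~\ref{T:rl}. Since $\ep$ is extended-reachable, some extended mutation sequence $\mub$ sends $\ep$ to a negative weight, whose general representation is zero. Consequently both $\rank(\mub(\ep),\mub(\tau\delta))=0$ and $\rank(\mub(\delta),\mub(\ep))=0$ are completely extremal, so Theorem~\ref{T:rl} applies and produces the exact sequences \eqref{eq:rseq} and \eqref{eq:lseq} with $\mc{R}$ (resp.\ $\mc{L}$) general of the stated weight, $(\mc{M},\mc{E})$ a generic pair, and $h_i$ (resp.\ $g_i$) generic in the stated $\Hom$-space.

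Second I would upgrade the genericity. Extended-reachability preserves rigidity, under ordinary mutations by Lemma~\ref{L:HEmu} and under $\tau^{\pm 1}$ by Lemma~\ref{L:H2E}(2), so $\e_J(\ep)=0$. Lemma~\ref{L:gqiso} then forces $\dim(U/\Aut_J(\ep))=0$, meaning a general presentation of weight $\ep$ lies in a single $\Aut_J(\ep)$-orbit and all general representations in $\PC(\ep)$ are mutually isomorphic. A standard genericity argument --- using that, for any dense open $V$ in a product variety, the slice $V\cap(\PC\times\{E\})$ is dense open whenever $E$ lies in the unique generic orbit --- now shows that coupling a generic $R$ (resp.\ $M$, $L$) with a generic $E$ yields a generic pair. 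By a variation of the proofs of Lemmas~\ref{L:rltri} and~\ref{L:rltri1} (see Remarks~\ref{r:rltri} and~\ref{r:rltri1}), one can further arrange the connecting morphism $g_0$ (resp.\ $h_0$, $f_0$) to be generic in the corresponding $\Hom$-space, so its rank equals the corresponding general rank.

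Third I would extract the rank identities. Once $(R,E)$ is generic with $g_0$ generic in $\Hom_J(R,E)$, one has $\rank(g_0)=\rank(r_\ep(\delta),\ep)$ by definition. Applying Corollary~\ref{C:long} to the triangle $\Sigma^{-1}\b{E}\to\b{M}\to\b{R}\to\b{E}$ recovers the defining formula $r_\ep(\delta)=\delta+\ep+\rank(h_0)B$. Combining the identity $\epc=\ep+\dv(\ep)B$ with the exactness of \eqref{eq:rseq} at $E$, which yields $\rank(g_0)+\rank(h_0)=\dv(E)$, converts this into $r_\ep(\delta)=\delta+\epc-\rank(g_0)B$. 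The two $r^\ep$ formulas arise from the same procedure applied to the earlier portion of the sequence containing $g_{-1}$ and $h_{-1}$, using the $\dtc$-version of Corollary~\ref{C:long} and the rigidity of $\tau^{-1}\mc{E}$.

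Finally I would deduce the $\hom$ and $\e$ identities. From the defining four-term sequence of Definition~\ref{D:HomE} one obtains the Euler identities $\hom_J(\mc{X},\mc{E})-\e_J(\mc{X},\mc{E})=\delta_{\mc{X}}(\dv E)$ and, dually, $\hom_J(\mc{X},\mc{E})-\ec_J(\mc{X},\mc{E})=\epc(\dv X)$. Evaluating each at $\mc{X}\in\{\mc{M},\mc{R}\}$ and subtracting, then substituting the rank identities just established together with the rigidity $\e_J(\mc{E},\mc{E})=\ec_J(\mc{E},\mc{E})=0$ (Lemma~\ref{L:H2E}(2)), yields the four targeted equalities. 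Part~(2) is proved symmetrically, using Theorem~\ref{T:rl}(2) and the lowering operator $l_\ep$. The main obstacle is the bootstrap in the second paragraph: without the rigidity of $\ep$, the representation $E$ is not essentially unique, and neither the pair $(R,E)$ nor the morphism $g_0$ can be upgraded to generic; every numerical identity derived afterwards rests on this upgrade.
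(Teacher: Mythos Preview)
Your first three steps are essentially the paper's argument. Verifying the hypothesis of Theorem~\ref{T:rl} via extended-reachability, deducing rigidity of $\ep$ and hence the essential uniqueness of $E$ from Lemma~\ref{L:gqiso}, and then using this uniqueness to upgrade the pairs $(R,E)$, $(E,M)$ (resp.\ $(M,E)$, $(E,L)$) to general pairs is exactly what the paper does. One small point you leave implicit: after mutating $\ep$ to a nonpositive weight, $\dv(\mub(\ep))=0$, so the completely extremal rank satisfies $r=0=\dv(\mub(\ep))$ \emph{simultaneously}; it is this coincidence that lets you invoke both Remarks~\ref{r:rltri} and~\ref{r:rltri1} (equivalently, both clauses of Remark~\ref{r:genhom}) to make the extra morphisms general. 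The paper does not spell this out either, but it is the mechanism behind the citation of Remark~\ref{r:genhom}.

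The gap is in your fourth step. The two Euler identities you write down, evaluated at $\mc{X}\in\{\mc{M},\mc{R}\}$ and subtracted, give only
\[
\Delta\hom-\Delta\e=(r_\ep(\delta)-\delta)(\dv E),\qquad \Delta\hom-\Delta\ec=\epc(\dv R-\dv M),
\]
which is two linear relations among three unknowns $\Delta\hom,\Delta\e,\Delta\ec$. The rigidity $\e_J(\mc{E},\mc{E})=\ec_J(\mc{E},\mc{E})=0$ does not appear in these relations, so substituting it yields nothing. In fact the first relation is \emph{equivalent} to the consistency of the two target equalities for $\hom$ and $\e$, so one still has to prove at least one of them independently. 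The paper does this by breaking the long exact sequence into short pieces $0\to\img(h_{-1})\to L\to\img(f_0)\to 0$ and $0\to\img(f_0)\to M\to\img(g_0)\to 0$, applying the functor $\Hom_J(-,\dc_{\mc{E}})$ and the snake lemma, and then using rigidity in the form $\Hom_J(\tauh^{-1}\mc{E},E)=\Ec_J(\mc{E},\mc{E})=0$ and $\Ec_J(\img(g_0),\mc{E})=0$ (since $\img(g_0)\hookrightarrow E$) to kill the obstruction terms. This is where rigidity actually enters, and your Euler-form shortcut bypasses exactly the step that needs it.
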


\begin{proof} We will only prove the statement for $l_\ep(\delta)$.
As $\ep$ is rigid, by Lemma \ref{L:gqiso} $E$ is essentially the only general representation in $\PC(\ep)$.
So we may assume that both $(M,E)$ and $(E,\tauh \mc{L})$ are general pairs.
In particular, the rank of each $h_i$ is the general rank from $\tau^{i}(\ep)$ to $\tau^{i+1}(l_\ep(\delta))$ (see Remark \ref{r:genhom}).
	
The equalities of the left column follow directly from the definition and the exactness of the sequence.
The equalities of the right column are all easy exercises of homological algebra. We prove the first one as an illustration.
We apply $\Hom_J(-,E)$ to the exact sequence $0\to \img(h_{-1}) \to L\to \img(f_0)\to 0$, and get
$$0\to \Hom_J(\img(f_0),E)\to \Hom_J(L,E) \to \Hom_J(\img(h_{-1}),E)=0.$$
$\Hom_J(\img(h_{-1}),E)$ vanishes because $\Hom_J(\tauh^{-1}\mc{E},E)=\Ec_J(E,\mc{E})=0$.
Thus $\Hom_J(\img(f_0),E)\cong \Hom_J(L,E)$. 
Then apply $\Hom_J(-,E)$ to the exact sequence $0\to \img(f_0) \to M\to \img(g_0)\to 0$.
We have
$$0\to \Hom_J(\img(g_0),E)\to \Hom_J(M,E) \to \Hom_J(\img(f_0),E)\to \Ec_J(\img(g_0),E)=0.$$
$\Ec_J(\img(g_0),E)$ vanishes because $\Ec_J(E,E)=0$.
So $\hom_J(l_{\ep}(\delta),E) = \hom_J(\delta,E) - \epc(\rank(g_0))$.
\end{proof}

A direct application of Proposition \ref{P:rlmu} and Lemma \ref{L:rlmu} gives the following corollary.
\begin{corollary} \label{C:rle} Let $\b{u}$ be an extended mutation sequence such that $\mub(\ep)$ is nonpositive.
	The operators $l_\ep$ and $r_\ep$ on the $\delta$-vectors of $(Q,\mc{S})$ are given by 
	\begin{align*} r_{\ep} (\delta) &= \mub^{-1} (\mub(\delta) + \mub(\ep)); \\
		l_{\ep} (\delta) &= \mub^{-1} (\mub(\delta) - \mub(\ep)).
	\end{align*}
	Let $\b{u}$ be an extended mutation sequence such that $\mub(\epc)$ is nonnegative.
	The operators $l^\ep$ and $r^\ep$ on the $\dtc$-vectors of $(Q,\mc{S})$ are given by 
	\begin{align*} r^{\ep} (\dtc) &= \mub^{-1} (\mub(\dtc) + \mub(\ep)); \\
		l^{\ep} (\dtc) &= \mub^{-1} (\mub(\dtc) - \mub(\ep)).
	\end{align*}
\end{corollary}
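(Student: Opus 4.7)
The plan is to derive the corollary directly from Proposition \ref{P:rlmu} by using the sign hypothesis on $\mub(\ep)$ (respectively $\mub(\epc)$) to force the general-rank correction terms in Definition \ref{D:rlgrank} to vanish on the mutated side, after which the operators take a trivially computable form.

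For the first part, I set $\delta' = \mub(\delta)$ and $\ep' = \mub(\ep)$. Since $\ep' \le 0$, a general decorated representation $\mc{E}'$ of weight $\ep'$ is pure decoration, so its underlying representation $E'$ vanishes. Consequently both $\rank(\delta', \ep')$ and $\rank(\ep', \tau\delta')$ are zero, and so $(\delta',\ep')$ has completely extremal rank in both senses relevant to $l_\ep$ and $r_\ep$. Proposition \ref{P:rlmu} then lets me commute both operators with the extended mutation sequence $\mub$, reducing the problem to evaluating $r_{\ep'}(\delta')$ and $l_{\ep'}(\delta')$ directly. With $E'=0$ the rank-correction terms disappear, and \eqref{eq:delta2dual} with $\dim E' = 0$ forces $\epc' = \ep'$; so $r_{\ep'}(\delta') = \delta' + \ep'$ and $l_{\ep'}(\delta') = \delta' - \ep'$, and applying $\mub^{-1}$ yields the displayed formulas.

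For the second part I would argue dually. The hypothesis $\mub(\epc) \ge 0$ ensures that a general representation $\mc{E}'$ with $\dtc$-vector $\epc'$ is injective with no decorated summand, so $\tauh^{-1}\mc{E}' = 0$. Hence the general ranks $\rank(\tauh^{-1}\mc{E}', \dtc')$ and $\rank(\tau^{-1}\dtc', \tauh^{-1}\mc{E}')$ both vanish, and Proposition \ref{P:rlmu} lets me commute $r^\ep$ and $l^\ep$ with $\mub$. A direct evaluation of Definition \ref{D:rlgrank}, using that $\tau^{-1}$ sends a general injective representation to a pure decoration, then recovers the stated formulas.

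The main obstacle I anticipate is purely bookkeeping: checking that the vanishing of the general ranks in the relevant extremal case matches exactly the rank-correction terms appearing in Definition \ref{D:rlgrank}, and tracking the relation between $\tau^{-1}\ep'$ and $\epc'$ carefully in the second part. No genuinely new ideas beyond Proposition \ref{P:rlmu} and the definitions of the operators are required.
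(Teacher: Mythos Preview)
Your proposal is correct and matches the paper's own approach, which simply says the corollary follows by a direct application of Proposition \ref{P:rlmu} and Lemma \ref{L:rlmu}. The only point worth making precise is that Proposition \ref{P:rlmu} is being invoked on the mutated side: since $(\delta',\ep')$ has completely extremal rank (namely zero), $l_{\ep'}$ and $r_{\ep'}$ commute with the inverse sequence $\mub^{-1}$, yielding $l_\ep(\delta)=\mub^{-1}(l_{\ep'}(\delta'))$ and $r_\ep(\delta)=\mub^{-1}(r_{\ep'}(\delta'))$, after which your direct computation finishes the argument.
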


\noindent  Even in some trivial cases, these formulas produce amusing equalities.
\begin{proposition} We have the following equalities. \begin{enumerate}
\item $\tau(\delta+e_v+\rank(P_v,\tau\delta)B) = \tau\delta-e_v;$
\item $\tau^{-1}(\delta-e_v+\rank(\delta,I_v)B) = \tau^{-1}\delta+e_v.$
\item $\tau(\dtc-e_v-\rank(P_v, \dtc)B) = \tau\dtc+e_v;$
\item $\tau^{-1}(\dtc+e_v-\rank(\tau^{-1}\dtc, I_v)B) = \tau^{-1}\dtc-e_v.$
\end{enumerate}
\end{proposition}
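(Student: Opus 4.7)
The four equalities are specializations of the commutation between the operators $r_\ep, l_\ep, r^\ep, l^\ep$ and the Auslander--Reiten translation (Lemma~\ref{L:rlmu}, Proposition~\ref{P:rlmu}, and Corollary~\ref{C:rle}), set up so that the rank term of the transformed operator vanishes. The vanishing occurs because a general representation of $\delta$-weight $-e_v$ is the zero module, and $\tauh^{-1}\mc{I}_v=0$ (since $\tau^{-1}\mc{I}_v=\mc{S}_v^-$ has no module part). The auxiliary weight vectors in each case correspond to a projective, injective, or negative simple representation---all extended-reachable---so the hypotheses of Corollary~\ref{C:rle} are trivially met after at most one application of $\tau^{\pm 1}$.

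For (1), the LHS is $\tau(r_{e_v}(\delta))$ with auxiliary $\mc{E}=(P_v,0)$. Because $\tau(P_v,0)=\mc{S}_v^-$, the weight $\tau(e_v)=-e_v$ is nonpositive as a $\delta$-vector, so Corollary~\ref{C:rle} with $\mub=\tau$ yields the commutation $\tau(r_{e_v}(\delta))=r_{-e_v}(\tau\delta)$. Since the general representation of weight $-e_v$ is zero, the rank term $\rank(0,\tau^2\delta)=0$ vanishes, leaving $\tau\delta-e_v$. The dual argument gives (2): take $\ep=\delta_{\mc{I}_v}$ so that $\mc{E}=\mc{I}_v$ and $\epc=\dtc_{\mc{I}_v}=e_v$; since $\tau^{-1}\mc{I}_v=\mc{S}_v^-$, we have $\tau^{-1}\ep=-e_v$ nonpositive, and Corollary~\ref{C:rle} with $\mub=\tau^{-1}$ yields $\tau^{-1}(l_\ep(\delta))=l_{-e_v}(\tau^{-1}\delta)=\tau^{-1}\delta+e_v$ by the same collapse of the rank term.

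Items (3) and (4) are the analogs for $r^\ep$ and $l^\ep$ on $\dtc$-vectors. For (3), identify the expression inside $\tau$ as $r^{-e_v}(\dtc)$ using $\tauh^{-1}\mc{S}_v^-=P_v$; the commutation gives $\tau(r^{-e_v}(\dtc))=r^{\delta_{\mc{I}_v}}(\tau\dtc)$, and since $\tauh^{-1}\mc{I}_v=0$ the rank term vanishes, producing $\tau\dtc-\tau^{-1}(\delta_{\mc{I}_v})=\tau\dtc-\delta_{\mc{S}_v^-}=\tau\dtc+e_v$. For (4), take $\mc{E}=\tau\mc{I}_v$ so that $\tauh^{-1}\mc{E}=I_v$ and $\tau^{-1}\epc=\dtc_{\mc{I}_v}=e_v$; applying $\tau^{-1}$ to $l^\ep(\dtc)$ collapses the rank term by the same vanishing, giving $\tau^{-1}\dtc-e_v$. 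The only delicate point throughout is to distinguish the $\tau$-action on $\delta$-vectors from the one on $\dtc$-vectors when invoking the commutation formulas; both actions are induced by the underlying $\tau$ on decorated representations and must be tracked consistently.
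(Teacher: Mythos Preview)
Your proof is correct and follows essentially the same approach as the paper: both specialize Corollary~\ref{C:rle} (equivalently, the commutation of $r_\ep,l_\ep,r^\ep,l^\ep$ with $\tau^{\pm 1}$ from Lemma~\ref{L:rlmu}) to an $\ep$ for which the rank term vanishes after one application of $\tau^{\pm 1}$. The paper applies Corollary~\ref{C:rle} directly to read off $r_\ep(\delta)=\mub^{-1}(\mub(\delta)+\mub(\ep))$ and then compares with the defining formula, whereas you first pass through the commuted operator (e.g.\ $r_{-e_v}(\tau\delta)$) and observe its rank term is zero; these are two phrasings of the same computation.
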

\begin{proof} Consider the situation when the extended mutation sequence is just a single $\mu_+=\tau$, and $\ep=e_v$.
By Corollary \ref{C:rle} we have $r_\ep(\delta) = \tau^{-1} (\tau\delta-e_v)$.
Comparing with the original definition of $r_\ep$, we obtain the first equality.
Working with $\mu_-=\tau^{-1}$ and $\epc=e_v$ for $l_\ep$, we obtain the second equality.
Working with $\mu_+=\tau$ and $\ep=-e_v$ for $r^\ep$, we obtain the third equality.
Working with $\mu_-=\tau^{-1}$ and $\epc=-e_v$ for $l^\ep$, we obtain the last equality.
\end{proof}

\begin{proposition}\label{P:rlid} Suppose that we are in the situation of Theorem \ref{T:rl}.
The composition $l_{\ep} r_{\ep}$ is the identity if and only if $\rank(R,E)=\rank(r_{\ep}(\delta), \ep)$; 
and $r_{\ep} l_{\ep}$ is the identity if and only if $\rank(E,\tauh \mc{L})=\rank(\ep, \tau(l_{\ep}(\delta)))$.
In particular, when $\ep$ is rigid, both are identities.
\end{proposition}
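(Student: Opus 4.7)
The plan is to expand $l_\ep r_\ep(\delta)$ directly from Definition \ref{D:rlgrank} and read off the condition for it to equal $\delta$. Substituting gives
\[ l_\ep r_\ep(\delta) = r_\ep(\delta) - \epc + \rank(r_\ep(\delta),\ep)\, B = \delta + \ep + \rank(\ep,\tau\delta)\, B - \epc + \rank(r_\ep(\delta),\ep)\, B, \]
and the identity $\ep - \epc = -\dv(\ep)\, B$ from \eqref{eq:delta2dual} collapses this to
\[ l_\ep r_\ep(\delta) - \delta = \bigl(\rank(\ep,\tau\delta) + \rank(r_\ep(\delta),\ep) - \dv(\ep)\bigr) B. \]
Next I would feed in the exact sequence \eqref{eq:rseq} of Theorem \ref{T:rl}.(1): since $h_0$ is a general homomorphism in $\Hom_J(\mc{E}, \tauh \mc{M})$ with $(M,E)$ a general pair, we have $\rank(h_0) = \rank(\ep,\tau\delta)$, and exactness at $E$ forces $\rank(g_0) = \dv(\ep) - \rank(\ep,\tau\delta)$. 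Thus the displayed equation becomes $l_\ep r_\ep(\delta) - \delta = (\rank(r_\ep(\delta),\ep) - \rank(g_0))\, B$.

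After the frozen-vertex augmentation described before Example \ref{ex:acyclic}, which leaves all the above quantities intact and makes $B$ full-rank, $l_\ep r_\ep(\delta) = \delta$ becomes equivalent to the scalar equation $\rank(g_0) = \rank(r_\ep(\delta),\ep)$. I would then invoke the sandwich
\[ \rank(g_0) \;\leq\; \rank(R,E) \;\leq\; \rank(r_\ep(\delta),\ep), \]
in which the first inequality holds because $g_0 \in \Hom_J(R,E)$ and $\rank(R,E)$ is the generic (hence vertex-wise maximal) rank in that Hom space, and the second because the generic rank on a single fiber is bounded above by the generic rank over the whole family $\PC(r_\ep(\delta)) \times \PC(\ep)$. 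The forward direction of the iff then follows by squeezing. For the converse, when $\rank(R,E)$ attains $\rank(r_\ep(\delta),\ep)$ one must show that the particular map $g_0$ produced by the triangle already realizes the generic rank in $\Hom_J(R,E)$; I would argue this by letting the general input $h_0$ vary, so that the resulting family of induced morphisms $g_0$ sweeps out an open subset of $\Hom_J(R,E)$ and hence hits a generic element. Verifying this density is the main technical obstacle.

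The statement for $r_\ep l_\ep$ follows by the symmetric calculation using \eqref{eq:lseq} of Theorem \ref{T:rl}.(2): there $g_0 : M \to E$ is the general homomorphism (so $\rank g_0 = \rank(\delta,\ep)$) while $h_0 : E \to \tauh\mc{L}$ is the specific connecting map with $\rank h_0 = \dv(\ep) - \rank(\delta,\ep)$, yielding $r_\ep l_\ep(\delta) - \delta = (\rank(\ep,\tau l_\ep(\delta)) - \rank(h_0))\, B$ and the sandwich $\rank(h_0) \leq \rank(E,\tauh\mc{L}) \leq \rank(\ep,\tau l_\ep(\delta))$. Finally, for the ``in particular'' statement, Corollary \ref{C:rlrigid} guarantees that when $\ep$ is rigid both $(R,E)$ and $(E,\tauh\mc{L})$ can be chosen to be general pairs, so both rank equalities are automatic and both $l_\ep r_\ep$ and $r_\ep l_\ep$ reduce to the identity.
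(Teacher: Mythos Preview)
Your computation leading to $l_\ep r_\ep(\delta) - \delta = (\rank(r_\ep(\delta),\ep) - \rank(g_0))\,B$ is correct, and the frozen-vertex reduction is exactly what the paper does. The gap is in the bridge you build between $\rank(g_0)$ and $\rank(R,E)$.

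The inequality $\rank(R,E) \leq \rank(r_\ep(\delta),\ep)$ is false in general. The general rank over the family $\PC(r_\ep(\delta))\times\PC(\ep)$ does \emph{not} dominate the general rank on a single fibre; there is no semicontinuity pointing in that direction. Example~\ref{ex:nonpair} is an explicit counterexample: there $\rank(r_\ep(\delta),\ep)=(0,0)$, yet the specific $R$ produced by the triangle surjects onto $E$, so $\rank(R,E)=(1,1)$. Thus your squeeze collapses, and neither implication of the iff survives.

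The paper avoids this entirely. It uses directly that in the situation of Theorem~\ref{T:rl} one may take $g_0$ general in $\Hom_J(R,E)$ (this is the content of Remarks~\ref{r:rltri}(2), \ref{r:rltri1}, and~\ref{r:genhom}, transported to the sequence~\eqref{eq:rseq}), so that $\rank(g_0)=\rank(R,E)$ from the outset. With that in hand the formula reads $r_\ep(\delta)=\delta+\epc-\rank(R,E)\,B$, and substituting into $l_\ep$ gives $l_\ep r_\ep(\delta)=\delta+(\rank(r_\ep(\delta),\ep)-\rank(R,E))\,B$ in one line; the iff is then immediate once $B$ has full rank. Your density argument for the converse is exactly this generic-$g_0$ statement, so rather than treating it as an obstacle to be worked around, you should invoke it up front and drop the sandwich altogether. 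The rigid case and the $r_\ep l_\ep$ case are then handled as you say.
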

\begin{proof} Using the frozen-vertex trick (after Example \ref{ex:nonpair}, we may assume the $B$-matrix of the quiver has full rank.
	By Theorem \ref{T:rl} we have that $r_\ep(\delta) = \delta+\epc-\rank(R,E)B$. Then
	$$l_{\ep}(r_\ep(\delta)) = r_{\ep}(\delta) - \epc  + \rank(r_\ep(\delta), \ep) B = \delta +(\rank(r_\ep(\delta), \ep)-\rank(R,E))B .$$
Hence, $l_{\ep} r_{\ep}$ is the identity if and only if $\rank(R,E)=\rank(r_{\ep}(\delta), \ep)$.
The other equality is proved similarly.
\end{proof}
\noindent As we have seen in Example \ref{ex:nonpair}, $\rank(R,E)$ may be different from $\rank(r_{\ep}(\delta), \ep)$.

\subsection{Remarks on Dual Operators}
In view of Corollary \ref{C:rle} one may wonder the analogous definition of operators on $\dtc$-vectors such that
\begin{align} \label{eq:rce} \rc_{\ep} (\dtc) &= \mub^{-1} (\mub(\dtc) + \mub(\epc) ) ; \\
\label{eq:lce}	\lc_{\ep} (\dtc) &= \mub^{-1} (\mub(\dtc) - \mub(\epc) ).
\end{align}
where $\b{u}$ is a sequence of mutations such that $\mub(\epc)$ is nonpositive.

\begin{definition} \label{D:rlcgrank} For any $\delta$-vector $\ep$, we define the two operators $\rc_\ep$ and $\lc_\ep$ on the set of $\dtc$-vector as follows.
\begin{align*} 	\rc_\ep (\dtc) & =  \dtc +\epc -\rank(\tau^{-1}\dtc, \epc) B;\\
	\lc_\ep (\dtc) & =  \dtc -\ep -\rank(\ep, \dtc) B,
\intertext{and}	
	\rc^\ep (\delta) &= \delta+\ep +\rank(\delta,\tau\epc) B;   \\
	\lc^\ep (\delta) & =  \delta -\epc +\rank(\tau\ep, \tau\dtc) B.	
\end{align*}
\end{definition}

\noindent If we compare this definition with Definition \ref{D:rlgrank}, we immediately get the following lemma.
\begin{lemma}\label{L:dualop} We have the following equalities 
\begin{align*}
\lc_{-\epc}(\dtc)  = r^\ep(\dtc)  \quad \text{and} \quad  \rc_{-\epc}(\dtc) = l^\ep(\dtc);\\
\lc^{-\epc}(\delta) = r_\ep(\delta)  \quad \text{and} \quad  \rc^{-\epc}(\delta) = l_\ep(\delta).
\end{align*}
where $-\epc$ is viewed as a $\delta$-vector (note that $\ker(\epc) = \tau \coker(-\epc)$).
\end{lemma}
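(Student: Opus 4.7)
The plan is a direct side-by-side comparison of Definitions \ref{D:rlgrank} and \ref{D:rlcgrank}, after substituting $\ep \mapsto -\epc$ (or dually $\epc \mapsto -\ep$) into the latter. The conceptual point of the lemma is that the ``check'' operators on $\dtc$-vectors (resp.\ $\delta$-vectors) defined via the inverted mutation formulas \eqref{eq:rce} and \eqref{eq:lce} are precisely the raising and lowering operators $r^\ep, l^\ep, r_\ep, l_\ep$ of Definition \ref{D:rlgrank}, seen through this change of variable.

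The first step is to set up the dictionary. The key observation is that the $\delta$-vector $-\epc$ corresponds to the representation $\tauh^{-1}\mc{E}$, as highlighted by the parenthetical hint $\ker(\epc) = \tau\coker(-\epc)$. From the relation between $\tau$, $\delta$, and $\dtc$ recorded just before Theorem \ref{T:genpi}, together with \eqref{eq:delta2dual}, one derives the companion identities needed for the substitutions, namely $\tau^{-1}\ep = -\epc$, $\tau(-\epc) = \ep$, and $(-\epc)\ck = \dtc_{\tauh^{-1}\mc{E}} = \tau^{-1}\epc$. One should also note that whenever the notation $\rank(-,-)$ appears with one argument naturally a $\delta$-vector and the other a $\dtc$-vector, this is harmless because $\PC(\delta)=\PC(\dtc)$ by Theorem \ref{T:genpi}.(1), so the two prescriptions single out the same principal component.

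With the dictionary in hand, each of the four equalities reduces to a one-line rewriting. For instance, substituting $\ep = -\epc$ into $\lc_{\ep}(\dtc) = \dtc - \ep - \rank(\ep, \dtc)B$ gives $\dtc + \epc - \rank(\tauh^{-1}\mc{E}, \dtc)B$, which matches $r^{\ep}(\dtc) = \dtc - \tau^{-1}\ep - \rank(\tauh^{-1}\mc{E}, \dtc)B$ upon applying $-\tau^{-1}\ep = \epc$. The identity $\rc_{-\epc}(\dtc) = l^{\ep}(\dtc)$ is obtained similarly using $(-\epc)\ck = \tau^{-1}\epc$, and the two $\delta$-vector statements $\lc^{-\epc}(\delta) = r_\ep(\delta)$ and $\rc^{-\epc}(\delta) = l_\ep(\delta)$ follow the same pattern after applying $\tau(-\epc) = \ep$ in the subscript of the rank.

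There is no real obstacle here beyond careful bookkeeping. The only subtlety is keeping track of which vector is a $\delta$-vector versus a $\dtc$-vector at each occurrence, and remembering that once $\ep$ is replaced by $-\epc$ in the parameterization, every derived symbol in the formulas (the dual $\epc$, the shifted $\tau\ep$, the rank arguments) must be recomputed under the new assignment using the dictionary above.
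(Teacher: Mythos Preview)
Your proposal is correct and takes essentially the same approach as the paper, which simply states that the lemma follows immediately by comparing Definition \ref{D:rlcgrank} with Definition \ref{D:rlgrank}. You have merely spelled out the dictionary (in particular $\tau^{-1}\ep = -\epc$, which is exactly the content of the parenthetical hint) and carried out the substitutions explicitly, which the paper leaves to the reader.
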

From this lemma, one can easily see that the equalities \eqref{eq:rce} and \eqref{eq:lce} do hold for $\rc_\ep$ and $\lc_\ep$ in Definition \ref{D:rlcgrank}.
We leave it to readers to formulate the analogue of Corollary \ref{C:rle} for $\rc_\ep$ and $\lc_\ep$.

We also notice that there are more equivalent statements in Proposition \ref{P:CDP} if we allow $\rc^\ep$ and $\lc^\ep$ to join the game. For example, if $\delta=\delta_1\oplus \delta_2$, then $\tau\delta=\tau\delta_1\oplus \tau\delta_2$, so $\rc^\delta = \rc^{\delta_1}\rc^{\delta_2}$ by Lemma \ref{L:dualop}.
\begin{corollary}If $\delta=\delta_1\oplus \delta_2$, then $r_\ep(\delta) = r_{r_\ep(\delta_1)}(\delta_2)=r_{r_\ep(\delta_2)}(\delta_1)$.
\end{corollary}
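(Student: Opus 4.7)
My plan is to lift the entire picture into the $2$-Calabi-Yau cluster category $\C=\C_{Q,\S}$ and then run the octahedral axiom twice. First, take $\b{E}$ general of index $\ep$ and $\b{M}$ general of index $\delta$; since $\delta=\delta_1\oplus\delta_2$, we may arrange $\b{M}=\b{M}_1\oplus\b{M}_2$ with each $\b{M}_i$ general of index $\delta_i$. Pick $\b{g}\in\C(\Sigma^{-1}\b{E},\b{M})$ general; it decomposes as $(\b{g}_1,\b{g}_2)$ with $\b{g}_i$ general in $\C(\Sigma^{-1}\b{E},\b{M}_i)$. Completing $\b{g}$ (resp.\ $\b{g}_i$) to a distinguished triangle produces cones $\b{R}$ and $\b{R}_i$; by Lemma \ref{L:brl=rl} and the very definition of $\b{r}$, we have $\ind\b{R}=r_\ep(\delta)$ and $\ind\b{R}_i=r_\ep(\delta_i)$.

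For each choice $\{i,j\}=\{1,2\}$, I apply the octahedral axiom to the composition $\b{M}_j\xrightarrow{\iota_j}\b{M}\xrightarrow{q}\b{R}$, where $q$ comes from the defining triangle of $\b{g}$. The cone of $\iota_j$ is $\b{M}_i$ and the cone of $q$ is $\b{E}$, and the octahedron produces a third triangle $\b{M}_i\to\b{H}\to\b{E}\to\Sigma\b{M}_i$. The standard description of the octahedral connecting morphism identifies it with $\Sigma p_i\circ\Sigma\b{g}=\Sigma\b{g}_i$, so after rotation we recover the defining triangle of $\b{R}_i$ and hence $\b{H}\cong\b{R}_i$. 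The remaining octahedral triangle then reads
$$\Sigma^{-1}\b{R}_i\xrightarrow{\b{h}_i}\b{M}_j\longrightarrow\b{R}\longrightarrow\b{R}_i,$$
so $\b{R}$ is exhibited as the cone of a morphism $\b{h}_i\in\C(\Sigma^{-1}\b{R}_i,\b{M}_j)$.

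If $\b{h}_i$ can be chosen generic in $\C(\Sigma^{-1}\b{R}_i,\b{M}_j)$, then by Definition \ref{D:rlc} of $\b{r}_{\b{R}_i}$ we obtain $\ind\b{R}=r_{r_\ep(\delta_i)}(\delta_j)$. Running this for both $i=1$ and $i=2$ and comparing with $\ind\b{R}=r_\ep(\delta)$ yields the two asserted equalities.

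The principal obstacle is establishing the genericity of $\b{h}_i$. Applying $\C(-,\b{M}_j)$ to $\Sigma^{-1}\b{E}\xrightarrow{\b{g}_i}\b{M}_i\to\b{R}_i\to\b{E}$ produces a long exact sequence in which $\b{h}_i$ is the image of $\b{g}_j$ under a natural linear map
$$\phi\colon\C(\Sigma^{-1}\b{E},\b{M}_j)\longrightarrow\C(\Sigma^{-1}\b{R}_i,\b{M}_j).$$
Since $\b{g}_j$ is already generic, the problem reduces to showing that $\phi$ has dense (equivalently surjective) image, or equivalently that the next arrow $\C(\Sigma^{-1}\b{R}_i,\b{M}_j)\to\C(\Sigma^{-1}\b{M}_i,\b{M}_j)$ in the LES vanishes. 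I would attempt this vanishing by combining the generic choice of $(\b{M}_i,\b{M}_j,\b{E})$ with Corollary \ref{C:e0} and Lemma \ref{L:EC} to argue that the morphisms in question factor through $\add\Sigma\T$; this would remove them in passing to $\Hom$ in the module category and close the argument. If the generic lifting of $r_\ep$ at $\delta_i$ is not automatic, the same octahedral framework combined with Proposition \ref{P:rlmu} shows that after an admissible extended mutation sequence (existing in all cases covered by Conjecture \ref{c:cer}) one reduces to the completely extremal situations of Lemmas \ref{L:rltri} and \ref{L:rltri1}, where $\b{R}_i$ is genuinely general of index $r_\ep(\delta_i)$.
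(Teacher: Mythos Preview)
Your approach through the octahedral axiom in $\C$ is completely different from the paper's, and it carries a genuine gap that you yourself flag but do not close. The paper's proof is a one-line formal manipulation: it introduces the dual operator $\rc^\delta$ (Definition \ref{D:rlcgrank}) and observes directly from the defining formulas that $r_\ep(\delta)=\rc^\delta(\ep)$. Since $\delta=\delta_1\oplus\delta_2$ implies $\rc^\delta=\rc^{\delta_1}\rc^{\delta_2}=\rc^{\delta_2}\rc^{\delta_1}$ (the analogue of Proposition \ref{P:CDP}, obtained via Lemma \ref{L:dualop}), one simply computes
\[
r_\ep(\delta)=\rc^\delta(\ep)=\rc^{\delta_1}\rc^{\delta_2}(\ep)=\rc^{\delta_1}\bigl(r_\ep(\delta_2)\bigr)=r_{r_\ep(\delta_2)}(\delta_1),
\]
and symmetry gives the other equality. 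No triangles, no genericity issues, no lifting --- the whole thing is a bookkeeping identity between rank formulas.

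Your argument, by contrast, needs $\b{R}_i$ to be \emph{general} of index $r_\ep(\delta_i)$ in order to identify $\b{r}_{\b{R}_i}(\delta_j)$ with $r_{r_\ep(\delta_i)}(\delta_j)$; otherwise $\rank(R_i,\tau\delta_j)$ need not equal $\rank(r_\ep(\delta_i),\tau\delta_j)$. This is precisely the generic lifting problem of Definition \ref{D:liftc}, which the paper establishes only under the extremal-rank hypotheses of Lemmas \ref{L:rltri} and \ref{L:rltri1}. Your final paragraph appeals to Conjecture \ref{c:cer} to supply such a mutation sequence, but that conjecture is unproven, so the proof as written is conditional. (Incidentally, the surjectivity of your map $\phi$ is fine: $\delta=\delta_1\oplus\delta_2$ gives $\e(\delta_i,\delta_j)=\e(\delta_j,\delta_i)=0$, hence $\C(\Sigma^{-1}\b{M}_i,\b{M}_j)=0$ by Corollary \ref{C:e0}.(3), and the long exact sequence then forces $\phi$ onto. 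But this still does not make $\b{R}_i$ general, nor does it make $(\b{R}_i,\b{M}_j)$ a general pair.) The paper's route sidesteps all of this by never leaving the level of $\delta$-vectors.
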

\begin{proof} We have that $r_\ep(\delta) = \rc^\delta(\ep) = \rc^{\delta_1}\rc^{\delta_2}(\ep) = \rc^{\delta_1}(r_\ep(\delta_2)) = r_{r_\ep(\delta_2)}(\delta_1)$. The other equality follows from the symmetry.
\end{proof}


\section{Mutation of General Ranks} \label{S:Prop}
\subsection{A Mutation Formula}
Recall from Lemma \ref{L:rlmu} that if $r_\ep$ can be generically lifted at $\delta$, then
$r_{\mub(\ep)}(\mub(\delta))=\mub (r_\ep(\delta))$ for any extended mutation sequence, and similarly for $l_\ep$. Note that this happens when either $\ep$ or $\delta$ is reachable.
\begin{theorem} \label{T:murank} Let $\gamma_r = \rank(\ep,\tau\delta)$ and $\gamma_l = \rank(\delta,\ep)$.	We denote $\gamma_r' = \rank(\ep',\tau\delta')$ and $\gamma_l' = \rank(\delta',\ep')$, where $\delta'=\mu_u(\delta)$ and $\ep'=\mu_u(\ep)$. 
\begin{enumerate}
\item If $\mu_u (r_\ep(\delta))=r_{\ep'}(\delta')$, then
\begin{align*} 
	\gamma_r'(v) &= \begin{cases} \gamma_r(v) &  \text{for all $v\neq u$}, \\
		\gamma_r [b_u]_+ - \gamma_r(u) + [\delta(u)]_+ + [\ep(u)]_+ - [r_\ep(\delta)(u)]_+ & \text{for all $v=u$}.
	\end{cases}
\end{align*}
\item If $\mu_u (l_\ep(\delta))=l_{\ep'}(\delta')$, then \begin{align*}
\gamma_l'(v) &= \begin{cases} \gamma_l(v) & \text{for all $v\neq u$}, \\
\gamma_l [b_u]_+ - \gamma_l(u) + [\delta(u)]_+ + [-\epc(u)]_+ - [l_\ep(\delta)(u)]_+ & \text{for all $v=u$}.
\end{cases} \end{align*}
\end{enumerate}
\end{theorem}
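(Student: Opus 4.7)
The plan is to derive both formulas by pure algebraic manipulation, combining three ingredients: the hypothesis (namely $\mu_u(r_\ep(\delta))=r_{\ep'}(\delta')$ for part (1), or $\mu_u(l_\ep(\delta))=l_{\ep'}(\delta')$ for part (2)), the mutation rule \eqref{eq:gmu} for $\delta$-vectors combined with the standard mutation rule for the $B$-matrix, and the observation that the dimension vector of any $J$-representation at a vertex $v\neq u$ is unchanged by $\mu_u$. Part (2) will follow from part (1) by a verbatim argument in which $-\epc$ takes over the role formerly played by $\ep$ in the defining identity $l_\ep(\delta)=\delta-\epc+\gamma_l B$, so I only outline part (1).

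The first step is to establish $\gamma_r'(v)=\gamma_r(v)$ for every $v\neq u$. Since $\gamma_r=\rank(\ep,\tau\delta)$ is by definition the dimension vector of the image of a general homomorphism $\phi\in\Hom_J(E,\tauh\mc{M})$, I would lift $\phi$ to a morphism $\b{\phi}\in\C(\b{E},\Sigma\b{M})$ in the cluster category, apply the triangle equivalence $\br{\mu}_u^-$ from Theorem \ref{T:twomu}, and invoke Lemma \ref{L:gendes} to conclude that its descent $\phi'\in\Hom_{J'}(E',\tauh\mc{M}')$ is again general and hence computes $\gamma_r'$. Because the vector spaces $E(v),\,(\tauh\mc{M})(v)$ and the $v$-component of the linear map are left untouched by the mutation at every $v\neq u$, the rank of $\phi'$ at $v$ equals the rank of $\phi$ at $v$, giving the desired invariance.

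The second step is to pin down $\gamma_r'(u)$ from the defining identity $r_{\ep'}(\delta')=\delta'+\ep'+\gamma_r'B'$. Set $\rho:=r_\ep(\delta)$ and pick a $v\neq u$ with $b_{uv}\neq 0$; if no such $v$ exists, one first applies the frozen-vertex trick described in Section \ref{S:GR}. Expanding $\rho'(v),\delta'(v),\ep'(v)$ via \eqref{eq:gmu}, using the matrix-mutation formulas $b'_{uv}=-b_{uv}$ and $b'_{wv}=b_{wv}+[b_{wu}]_+[b_{uv}]_+-[-b_{wu}]_+[-b_{uv}]_+$ for $w\neq u$, substituting $\gamma_r'(w)=\gamma_r(w)$ from Step 1, and cancelling the common term $\rho(v)=\delta(v)+\ep(v)+(\gamma_r B)(v)$, the whole equation collapses to a scalar multiple of $b_{uv}$ on each side. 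When $b_{uv}>0$, dividing through yields exactly
\[
\gamma_r'(u)=\gamma_r[b_u]_+ -\gamma_r(u)+[\delta(u)]_+ +[\ep(u)]_+ -[\rho(u)]_+,
\]
which is the claimed formula. The case $b_{uv}<0$ delivers a mirror expression involving the minus-parts, and the two candidate values differ by $(\gamma_r B)(u)+\delta(u)+\ep(u)-\rho(u)$, which vanishes by the very definition of $\rho$; thus both subcases agree.

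The main obstacle is Step 1: identifying $\gamma_r'$ with the rank of the mutated morphism cleanly requires the detour through the $2$-Calabi-Yau category $\C$ and its triangle equivalence in order to guarantee that the descent remains general. Once that invariance is secured, Step 2 is a tedious but mechanical unwinding of the mutation rules.
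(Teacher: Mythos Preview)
Your Step 2 and the overall algebraic strategy are close to what the paper does, but Step 1 contains a gap. You assert that after lifting $\phi$ to $\C$, applying the triangle equivalence, and descending, ``the $v$-component of the linear map is left untouched by the mutation at every $v\neq u$.'' Theorem \ref{T:twomu} and Lemma \ref{L:gendes} guarantee that \emph{objects} transform via DWZ mutation and that \emph{genericity of morphisms} is preserved, but neither statement controls the individual linear components $\phi_v$ of the descended morphism. The descent $\phi'$ is obtained by quotienting $\C'(\b{E}',\Sigma\b{M}')$ by the ideal $(\Sigma\T')$, which is not the ideal $(\Sigma\T)$ used before mutation, and the Keller--Yang equivalence $\br{\mu}_u^-$ is not literally the identity away from $u$ at the level of morphisms in $\mod J$. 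Turning your intuition into a proof that $\phi'_v=\phi_v$ would require unpacking that construction in detail, which is substantially more than your sketch indicates.

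The paper sidesteps this issue with a purely algebraic argument. After the frozen-vertex trick makes $B$ (and hence $B'$) full rank, $\gamma_l'$ is characterised as the \emph{unique} vector satisfying $l_{\ep'}(\delta')=\delta'-\epc'+\gamma_l'B'$. One then \emph{defines} the candidate $\gamma_l'$ by the formula in the statement (in particular setting $\gamma_l'(v)=\gamma_l(v)$ for $v\neq u$) and verifies by direct expansion---essentially the computation you describe in Step 2, together with a short check at the $u$-th coordinate---that this candidate satisfies the defining equation at every coordinate. Uniqueness then forces it to be the actual $\gamma_l'$. Thus the invariance $\gamma_l'(v)=\gamma_l(v)$ for $v\neq u$ is a \emph{consequence} of the algebraic verification rather than a separate categorical input, and no analysis of how morphisms mutate is needed.
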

\begin{proof} We will only prove the statement for $\gamma_l$. The other one can be proved in a similar fashion.
If $B$ is not of full rank, then we apply the frozen vertex trick (after Example \ref{ex:nonpair}).
To ease our notation, we will denote $\wtd{B}$, $\wtd{\delta}$ and $\wtd{\epc}$ still by $B$, $\delta$ and $\epc$.
We know that $\gamma_l'$ is the unique vector satisfying
\begin{equation} \label{eq:gammalmu} l_\ep(\delta)' = l_{\ep'}(\delta')  = \delta' - \epc' + \gamma_l' B'. 
\end{equation}
Recall the mutation rule for $\delta$-vectors \eqref{eq:gmu}.	
For the coordinate $u$, we have that 
$$l_\ep(\delta)'(u)=-(\delta(u)-\epc(u)+\gamma_lb_u),$$
while
\begin{align*} 
\delta'(u)-\epc'(u)+\gamma_l' b_u' = -\delta(u)+\epc(u)+\sum_{v\neq u} \gamma_l(v) (-b_{v,u}) = -\delta(u)+\epc(u)-\gamma_l b_u = l_\ep(\delta)'(u).
\end{align*}
For a coordinate $v\neq u$, we have that
\begin{align*} l_\ep(\delta)'(v) &= l_\ep(\delta)(v) +[b_{v,u}]_+l_\ep(\delta)(u) - b_{v,u} [l_\ep(\delta)(u)]_+,
\end{align*}
while
\begin{align*}
&(\delta' - \epc' + \gamma_l' B')(v) \\
=& (\delta(v) +[b_{v,u}]_+\delta(u) - b_{v,u}[\delta(u)]_+ ) - (\epc(v) + [b_{v,u}]_+\epc(u) + b_{v,u}[-\epc(u)]_+) + (\gamma_l'(u)b_{u,v}' + \sum_{w\neq u} \gamma_l(w)b_{w,v}')\\
=& l_\ep(\delta)(v) + ([b_{v,u}]_+\delta(u) - b_{v,u}[\delta(u)]_+ ) - ([b_{v,u}]_+\epc(u) + b_{v,u}[-\epc(u)]_+) \\
&+ ((-\gamma_l(u)-\gamma_l'(u))b_{u,v} + \sum_{w\neq u} \gamma_l(w)(b_{w,v}'-b_{w,v})) \\
=& l_\ep(\delta)(v)  +[b_{v,u}]_+ (l_\ep(\delta)(u) - \gamma_lb_u) + (- b_{v,u}[\delta(u)]_+ ) - (b_{v,u}[-\epc(u)]_+) \\
&+ ((-\gamma_l[b_u]_+ - [\delta(u)]_+ - [-\epc(u)]_+ + - [l_\ep(\delta)(u)]_+)b_{u,v} + \sum_{w\neq u} \gamma_l(w)(b_{w,v}'-b_{w,v})) \\
=& l_\ep(\delta)(v)  +[b_{v,u}]_+l_\ep(\delta)(u) - b_{v,u} [l_\ep(\delta)(u)]_+  \\
&- \gamma_lb_u [b_{v,u}]_+ -\gamma_l[b_u]_+b_{u,v} + \sum_{w\neq u} \gamma_l(w)([b_{w,u}]_+[b_{u,v}]_+ - [-b_{w,u}]_+[-b_{u,v}]_+))\\
=& l_\ep(\delta)'(v)  - \gamma_l (b_u [b_{v,u}]_+ + [b_u]_+b_{u,v}) + \gamma_l ([b_{u}]_+[b_{u,v}]_+  - [-b_{u}]_+[-b_{u,v}]_+) \\
=& l_\ep(\delta)'(v).
\end{align*}
We thus get the desired equality \eqref{eq:gammalmu}.
\end{proof}

\begin{remark} Similarly one can show that \begin{align*} 
\gamma_r'(u) &=	\gamma_r [-b_u]_+ - \gamma_r(u) + [-\delta(u)]_+ + [-\ep(u)]_+ - [-r_\ep(\delta)(u)]_+,
\intertext{and}
\gamma_l'(u) &=	\gamma_l [-b_u]_+ - \gamma_l(u) + [-\delta(u)]_+ + [\epc(u)]_+ - [-l_\ep(\delta)(u)]_+.
\end{align*}
So one can rewrite the formula as
	\begin{align*} \gamma_r'(u) = \begin{cases} 
		\gamma_r [b_u]_+ - \gamma_r(u) + [\delta(u)]_+ + [\ep(u)]_+ & \text{if $r_{\ep}(\delta)(u)< 0$}; \\
		\gamma_r [-b_u]_+ - \gamma_r(u) + [-\delta(u)]_+ + [-\ep(u)]_+ & \text{if $r_{\ep}(\delta)(u)\geq 0$}.
\end{cases} \end{align*}
and
	\begin{align*} \gamma_l'(u) = \begin{cases} 
		\gamma_l [b_u]_+ - \gamma_l(u) + [\delta(u)]_+ + [-\epc(u)]_+& \text{if $l_\ep(\delta(u))< 0$}; \\
		\gamma_l [-b_u]_+ - \gamma_l(u) + [-\delta(u)]_+ + [\epc(u)]_+ & \text{if $l_\ep(\delta)(u)\geq 0$},
\end{cases} \end{align*}
\end{remark}

\subsection{Some Mutation Invariants}
With these explicit formulas, we find some interesting mutation-invariants assuming $r_\ep$ and $l_\ep$ can be generically lifted at $\delta$.	
\begin{proposition} \label{P:muinv} Suppose that $l_\ep$ and $r_\ep$ commutes the mutation $\mu_u$. Let $\gamma_r = \rank(\ep,\tau\delta)$ and $\gamma_l = \rank(\delta,\ep)$.  Then the following quantities are invariant under $\mu_u$:
\begin{align*} 
h_l(\delta,\ep):=&\epc({\gamma}_l) - \hom(\delta,\ep) + \hom(l_\ep(\delta),\ep);  \\   
\check{h}_l(\delta,\ep):=&\delta(\gamma_l) - \hom(\delta,\ep) + \hom(\delta, \lc^\delta(\ep)),
\intertext{and}	
e_r(\delta,\ep):=&\ep(\gamma_r)	-\e(\delta, \ep) +\e(r_{\ep}(\delta),\ep);  \\  
\check{e}_r(\delta,\ep):=&-\delta(\gamma_r)	-\e(\delta, \ep) +\e(\delta, \rc^{\delta}(\ep)).   
\end{align*}
\end{proposition}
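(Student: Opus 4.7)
The plan is to prove invariance under a single mutation $\mu_u$ by direct computation. For each of the four expressions I expand the mutated version and the original one using the available explicit formulas, and verify that all the $u$-coordinate corrections cancel. The four identities are structurally parallel, so I would write out $h_l$ in full and indicate that $e_r$, $\check{h}_l$, $\check{e}_r$ follow the same pattern with the appropriate substitutions.

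The three key ingredients are: (i)~Lemma~\ref{L:HEmu}, which gives $\hom(\delta',\ep') - \hom(\delta,\ep)$, $\e(\delta',\ep') - \e(\delta,\ep)$, and $\ec(\delta',\ep') - \ec(\delta,\ep)$ purely in terms of the $u$-th Betti numbers $[\pm\delta(u)]_+, [\pm\ep(u)]_+, [\pm\dtc(u)]_+$ of general presentations; (ii)~Theorem~\ref{T:murank}, which says $\gamma_l'(v) = \gamma_l(v)$ for $v\ne u$ and gives an explicit formula for $\gamma_l'(u)$ in terms of $\gamma_l[b_u]_+, [\delta(u)]_+, [-\epc(u)]_+, [l_\ep(\delta)(u)]_+$; and (iii)~the standing commutativity hypothesis, which gives $\mu_u(l_\ep(\delta)) = l_{\ep'}(\delta')$ and therefore licenses the application of the $\hom$-change formula to the term $\hom(l_\ep(\delta),\ep)$ as well as to $\hom(\delta,\ep)$.

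For $h_l$ the computation splits into two pieces. On one hand, two applications of Lemma~\ref{L:HEmu}(1) and subtraction give
\[
\bigl[\hom(l_\ep(\delta)',\ep') - \hom(\delta',\ep')\bigr] - \bigl[\hom(l_\ep(\delta),\ep) - \hom(\delta,\ep)\bigr]
= \bigl([-l_\ep(\delta)(u)]_+ - [-\delta(u)]_+\bigr)[-\dtc(u)]_+ - \bigl([l_\ep(\delta)(u)]_+ - [\delta(u)]_+\bigr)[\dtc(u)]_+.
\]
On the other hand, $\epc'(\gamma_l') - \epc(\gamma_l)$ splits as a $u$-contribution (using $\epc'(u)=-\epc(u)$ and Theorem~\ref{T:murank}(2) for $\gamma_l'(u)$) plus a contribution from $v\ne u$ (using the dual mutation rule of Lemma~\ref{L:gdmu} for $\epc'(v)-\epc(v)$ and $\gamma_l'(v)=\gamma_l(v)$). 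The $u$-contribution contains exactly the same $[\pm\delta(u)]_+ \cdot [\pm\dtc(u)]_+$ cross terms arising above (via $[-\epc(u)]_+$ and $[\epc(u)]_+$), while the $v\ne u$ contribution contains $\gamma_l[b_u]_+\cdot(-\epc(u))$ which matches the $\gamma_l[b_u]_+$ term from $\gamma_l'(u)$. Using the tropical identities $[x]_+ - [-x]_+ = x$ and $[x]_+[y]_+ - [-x]_+[-y]_+ = x[y]_+ + [-x]_+y$, everything collapses. The verifications for $e_r$ and the $\check{\phantom{h}}$-versions are parallel, using Lemma~\ref{L:HEmu}(2) or~(2$^*$), Theorem~\ref{T:murank}(1) for $\gamma_r$, and Lemma~\ref{L:dualop} to convert between $l^\ep, r^\ep$ and $\lc^\delta, \rc^\delta$.

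The main obstacle is bookkeeping: there are many $[\,\cdot\,]_+$ terms involving $\delta(u), \ep(u), \dtc(u), \epc(u), l_\ep(\delta)(u)$, and one must carefully align contributions from the $\hom$-mutation formula with those from $\epc'(\gamma_l')$. A further subtlety is that $\ep$ and $\dtc = \ep + \dv(\ep)\,B$ differ, so one tracks both vectors simultaneously when writing the dual mutation rule; fortunately only the $u$-coordinates $\ep(u)$ and $\dtc(u)$ appear in Lemma~\ref{L:HEmu}, and the remaining terms organize themselves cleanly. Once the bookkeeping goes through, the proposition reduces to algebraic identities among $[\pm x]_+$ expressions and requires no additional geometric input.
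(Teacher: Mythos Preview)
Your approach is essentially the paper's: direct computation using Theorem~\ref{T:murank} for the mutated rank and Lemma~\ref{L:HEmu} for the $\hom$/$\e$ differences, reducing everything to tropical identities in $[\pm x]_+$. The paper writes out $\check{h}_l$ rather than $h_l$ as the representative case, computing $\delta'(\gamma')-\delta(\gamma)$ and matching it against Lemma~\ref{L:HEmu} via \eqref{eq:r2l} and Lemma~\ref{L:dualop}, but the structure is identical to your $h_l$ computation.

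One notational slip: in your displayed formula and the surrounding discussion you write $\dtc$ where you mean $\epc$. In the paper's convention $\dtc = \delta + \dv(\delta)B$ is the dual of $\delta$; the injective Betti numbers entering Lemma~\ref{L:HEmu}(1) for $\hom(-,\ep)$ are $[\pm\epc(u)]_+$ with $\epc = \ep + \dv(\ep)B$. With that correction your formula is right.
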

\begin{proof} We will only prove the mutation-invariance of $\check{h}_l(\delta,\ep)$. The rest can be proved in a similar fashion. We set $\gamma=\gamma_l$.
\begin{align*} &\delta'({\gamma}') - \delta({\gamma}) \\
	=&\delta'(u)\gamma'(u) - \delta(u)\gamma(u) + \sum_{v\neq u} (\delta'(v)\gamma(v) - \delta(v)\gamma(v))\\
	=&\delta'(u)(\gamma[b_u]_+ - \gamma(u) +[\delta(u)]_+ + [-\epc(u)]_+ - [l_\ep(\delta)(u)]_+) - \delta(u)\gamma(u)  
		+ \sum_{v\neq u} \gamma(v)(\delta'(v)- \delta(v))   & \text{\!(Theorem \ref{T:murank})}\\
		=& -\delta(u)(\gamma[b_u]_+  +[\delta(u)]_+ + [-\epc(u)]_+ - [l_\ep(\delta)(u)]_+) 
		+ \sum_{v\neq u} \gamma(v)([b_{v,u}]_+\delta(u) - b_{v,u}[\delta(u)]_+) & \text{(by \eqref{eq:gmu})} \\
		=& -\delta(u)([\delta(u)]_+ + [-\epc(u)]_+ - [l_\ep(\delta)(u)]_+)
		+ \sum_{v\neq u} \gamma(v)(- b_{v,u}[\delta(u)]_+) \\
		=& -[\delta(u)]_+(\delta(u)+ \gamma b_u) -\delta(u)([-\epc(u)]_+ - [l_\ep(\delta)(u)]_+) \\
		=& -[\delta(u)]_+ l_\ep(\delta)(u) -[\delta(u)]_+\epc(u) -\delta(u)[-\epc(u)]_+ + \delta(u)[l_\ep(\delta)(u)]_+ \\
		=& [\delta(u)]_+ [-l_\ep(\delta)(u)]_+ -[-\delta(u)]_+ [l_\ep(\delta)(u)]_+ - [\delta(u)]_+[\epc(u)]_+ + [-\delta(u)]_+[-\epc(u)].
	\end{align*}
Comparing this with Lemma \ref{L:HEmu}, we find that 
\begin{align*} \delta'({\gamma}') - \delta({\gamma}) =&-(\hom(\delta',\ep') - \hom(\delta,\ep)) + (\hom(\delta', \tau l_\ep(\delta)') - \hom(\delta, \tau l_\ep(\delta))    &  \\
	=&-(\hom(\delta',\ep') - \hom(\delta,\ep)) + (\hom(\delta',\lc^\delta(\ep)') - \hom(\delta,\lc^\delta(\ep))    & \text{(by \eqref{eq:r2l} and Lemma \ref{L:dualop}).}
\end{align*}
\end{proof}

\begin{remark} If $\delta$ is extended-reachable, then the above invariants always vanish as seen in Corollary \ref{C:rlrigid}.
If $\delta$ is not rigid, they may not be zero (but conjecturally always nonnegative).
The mathematical meaning of this invariant is still mysterious to us.
\end{remark}

\section{Appendix: Conjecture \ref{c:hev} implies the Duality Pairing and the Saturation}
Recall from \cite{Ft} that the {\em tropical $F$-polynomial} $f_M$ of a representation $M$ is the function $(\mb{Z}^{Q_0})^* \to \mb{Z}_{\geq 0}$ defined by
	$$\delta \mapsto \max_{L\hookrightarrow M}{\delta(\dv L)};$$
	The {\em dual} tropical $F$-polynomial $\fc_M$ of a representation $M$ is the function $(\mb{Z}^{Q_0})^* \to \mb{Z}_{\geq 0}$ defined by
	$$\delta \mapsto \max_{M\twoheadrightarrow N}{\delta(\dv N)},$$
where $\delta$ is viewed as an element in $(\mb{Z}^{Q_0})^*$ via the usual dot product. Clearly $f_M$ and $\fc_M$ are related by $f_M(\delta)-\fc_M(-\delta)= \delta(\dv M)$.
If $M$ is general of weight $\delta$, then we will write $f_{\delta}$ for $f_M$.
Similarly we can define $\fc_\delta$ and $f_{\dtc}$.

In \cite[Section 6]{Ft} we showed that Fock-Goncharov's duality pairing conjecture for generic bases of skew-symmetric cluster algebras can be deduced from the following conjecture: for any pair $(\delta,\epc)$ we have that 
$$f_{\epc}(\delta) = \fc_{\delta}(\epc).$$
In fact, they are equivalent if the $B$-matrix of $Q$ has full rank.
We refer readers to \cite[Section 12]{FG} and \cite[Section 6]{Ft} for the details of the duality pairing conjecture. 
As mentioned in \cite{Ft}, a more optimistic conjecture is the following {\em generic pairing} \begin{equation} \label{eq:genpairing} f_{\epc}(\delta) =\hom(\delta,\epc) = \fc_{\delta}(\epc). \end{equation}
The following theorem generalizes \cite[Theorem 3.22]{Ft}.
\begin{theorem} \label{T:dualitypairing} Assume either of the following two situations:
\begin{enumerate}
\item there is a sequence of mutations $\mub$ such that $(\mub(\delta), \mub(\epc))$ is $\hom$-vanishing;
\item there are two sequences of mutations $\mub$ and $\mu_{\check{\b{u}}}$ such that $(\mub(\delta), \mub(\epc))$ is  $\e$-vanishing and $(\mu_{\check{\b{u}}}(\delta), \mu_{\check{\b{u}}}(\epc))$ is $\ec$-vanishing.
\end{enumerate}
Then $f_{\epc}(\delta)=\hom(\delta,\epc)=\fc_{\delta}(\epc)$.
\end{theorem}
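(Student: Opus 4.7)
The plan is to use the basic inequalities $f_\epc(\delta)\le\hom(\delta,\epc)$ and $\fc_\delta(\epc)\le\hom(\delta,\epc)$, together with the mutation compatibility of all three quantities, to reduce the statement to the configuration produced by the hypotheses. The inequalities are standard: for a submodule $L\hookrightarrow M$ of a general representation of weight $\epc$, sufficiently many morphisms in $\Hom_J(P([\delta]_+),M)$ hit $L$ to exhibit $\delta(\dv L)\le\hom(\delta,\epc)$, and the $\fc$-inequality is dual. The mutation rule for $\hom$ is Lemma \ref{L:HEmu}.(1), and the parallel rule for the tropical $F$-polynomials is available from \cite{Ft}. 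In particular, both differences $f_\epc(\delta)-\hom(\delta,\epc)$ and $\fc_\delta(\epc)-\hom(\delta,\epc)$ are mutation invariant, so it suffices to verify the desired equalities after applying the sequence provided by (1) or (2).

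Case (1) is then immediate. The hypothesis $\hom(\mub(\delta),\mub(\epc))=0$, combined with $f,\fc\ge 0$ (the trivial submodule $L=0$ and trivial quotient contribute $0$) and the basic inequalities, forces $f_{\mub(\epc)}(\mub(\delta))=\fc_{\mub(\delta)}(\mub(\epc))=0=\hom(\mub(\delta),\mub(\epc))$. Mutation invariance of the two differences then transports the equalities back to $(\delta,\epc)$.

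Case (2) I would treat symmetrically using each sequence for one equality. The exact sequence \eqref{eq:HE} yields the identity $\hom(\delta',\epc')=\delta'(\dv\epc')+\e(\delta',\epc')$, so $\e$-vanishing at $\mub$ reads $\hom(\mub(\delta),\mub(\epc))=\mub(\delta)(\dv\mub(\epc))$; taking $L=M$ in the definition of $f$ gives $f_{\mub(\epc)}(\mub(\delta))\ge\mub(\delta)(\dv\mub(\epc))$, and combining with the basic inequality $f\le\hom$ yields $f=\hom$ at $\mub$. Dually, the identity $\hom(\delta',\epc')=\epc'(\dv\delta')+\ec(\delta',\epc')$ together with $\ec$-vanishing at $\mu_{\check{\b{u}}}$ (taking $N=M$ as a quotient in the definition of $\fc$) gives $\fc=\hom$ at $\mu_{\check{\b{u}}}$. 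Transporting each equality back via mutation invariance of the corresponding difference completes the proof.

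The chief obstacle is the input assertion that $f_\epc(\delta)-\hom(\delta,\epc)$ and $\fc_\delta(\epc)-\hom(\delta,\epc)$ are mutation invariant. The matching $f$-mutation rule is implicit in \cite[Section 3]{Ft}, but one has to check that its derivation does not secretly rely on a reachability assumption, so that the argument remains available along the admissible sequences $\mub$ and $\mu_{\check{\b{u}}}$ supplied by (1) and (2). A secondary point is to make precise the basic inequality $f_\epc(\delta)\le\hom(\delta,\epc)$ (and its $\fc$-analogue) in the fully decorated setting, by producing enough general morphisms meeting an arbitrary submodule. Once these two foundations are in place, the reduction above is mechanical.
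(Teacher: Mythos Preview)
Your proposal is correct and follows essentially the same route as the paper: the paper invokes \cite[Lemma~3.5]{Ft} for the basic inequality $f_{\epc}(\delta)\le\hom(\delta,\epc)$ and \cite[Lemma~3.21]{Ft} for the mutation invariance of the difference, then verifies equality after mutation exactly as you do (phrasing Case~(2) via the equivalent identity $\fc_{\epc}(-\delta)=\e(\delta,\epc)$ rather than your direct use of $\hom=\delta(\dv\epc)+\e$). Your flagged ``chief obstacle'' and ``secondary point'' are precisely the two lemmas the paper imports from \cite{Ft}, so your concerns are well placed but already addressed there.
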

\begin{proof} Let $\delta'=\mub(\delta)$ and $\epc'=\mub(\epc)$.
	If $\hom(\delta', \epc')=0$, then $f_{\epc'}(\delta') \geq \hom(\delta', \epc')$.
But $f_{\epc'}(\delta') \leq \hom(\delta', \epc')$ by \cite[Lemma 3.5]{Ft}.
Hence $f_{\epc'}(\delta')=\hom(\delta', \epc')$.
Then by \cite[Lemma 3.21]{Ft} we get $f_{\epc}(\delta)=\hom(\delta,\epc)$.
Similarly we can show that $\fc_{\delta}(\epc)=\hom(\delta,\epc)$.

If $\e(\delta', \epc')=0$, then a similar argument shows that $\fc_{\epc}(-\delta)=\e(\delta,\epc)$,
which is equivalent to $f_{\epc}(\delta)=\hom(\delta,\epc)$.
Similarly, from $\ec(\delta', \epc')=0$ we can conclude that $\ec(\delta,\epc)=f_{\delta}(-\epc)$, which is equivalent to $\hom(\delta,\epc)=\fc_{\delta}(\epc)$.
\end{proof}
\noindent Note that if $f_{\epc}(\delta)=\hom(\delta,\epc)$, then by \cite[Theorem 3.6]{Ft} $f_{\epc}(m\delta)=\hom(m\delta,\epc)$ for any $m\in\mb{N}$; similarly if $\fc_{\delta}(\epc)=\hom(\delta,\epc)$, then $\fc_{\delta}(m\epc)=\hom(\delta,m\epc)$ for any $m\in\mb{N}$.
Also note that $f_{\epc}(m\delta) = m f_{\epc}(\delta)$.
So the conclusion of Theorem \ref{T:dualitypairing} implies that
\begin{equation}\label{eq:homflu} \hom(m\delta,\epc)=\hom(\delta, m\epc)=m\hom(\delta,\epc) \quad \text{for any $m\in\mb{N}$}.
\end{equation}
Conversely, recall from \cite[Theorem 3.22]{Ft} that we always have that $$f_{\epc}(m\delta)=\hom(m\delta,\epc)\ \text{ and }\ f_{\delta}(n\epc)=\hom(\delta,n\epc)\ \text{ for some $m,n\in\mb{N}$}. $$
So if \eqref{eq:homflu} holds for the pair $(\delta,\epc)$, then we have that 
$f_{\epc}(\delta)=\hom(\delta,\epc)$ and $\hom(\delta,\epc)=\fc_{\delta}(\epc)$. 

\begin{definition} \label{D:saturation} We say that a pair $(\delta,\epc)$ has the {\em $\hom$-fluent property} if \eqref{eq:homflu} holds. 
We say that a pair $(\delta,\epc)$ has the {\em saturation property} if $\hom(\delta,\epc)=0$ whenever $\hom(m\delta,n\epc)=0$ for some $m,n\in\mb{N}$.
We say a QP is {\em $\hom$-fluent} (resp. saturated) if the $\hom$-fluent (resp. saturation) property holds for any pair $(\delta,\epc)$.
\end{definition}
\noindent The saturation property generalizes the ordinary saturation property for acyclic quivers \cite{DW1}.
The saturation property seems weaker than the $\hom$-fluent property.
But the proof of \cite[Theorem 3.22]{Ft} shows that they are in fact equivalent.
We conclude that
\begin{proposition}\label{P:saturation} The following are equivalent:
\begin{enumerate}
\item The generic pairing \eqref{eq:genpairing} holds for $(\delta,\epc)$.
\item The $\hom$-fluent property holds for $(\delta,\epc)$.
\item The saturation property holds for $(\delta,\epc)$.
\end{enumerate}
\end{proposition}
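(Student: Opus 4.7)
The plan is to establish $(1) \Leftrightarrow (2)$ by direct manipulation of the tropical $F$-polynomial identities, and then $(2) \Leftrightarrow (3)$ via a trivial direction together with the main substantive step.

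For $(1) \Leftrightarrow (2)$, the argument is essentially recorded in the paragraph immediately preceding the proposition. In the forward direction, positive homogeneity $f_{\epc}(m\delta) = m f_{\epc}(\delta)$ of the tropical $F$-polynomial, combined with \cite[Theorem 3.6]{Ft} (which propagates the equality $f_{\epc}(\delta) = \hom(\delta,\epc)$ to every $\mathbb{N}$-multiple of $\delta$), yields $\hom(m\delta,\epc) = m\hom(\delta,\epc)$; the parallel computation with $\fc_{\delta}$ gives $\hom(\delta, m\epc) = m\hom(\delta,\epc)$, i.e., \eqref{eq:homflu}. Conversely, \cite[Theorem 3.22]{Ft} supplies some $m, n \in \mathbb{N}$ with $f_{\epc}(m\delta) = \hom(m\delta,\epc)$ and $\fc_{\delta}(n\epc) = \hom(\delta, n\epc)$; dividing by $m$ and $n$ and invoking (2) then recovers $f_{\epc}(\delta) = \hom(\delta,\epc) = \fc_{\delta}(\epc)$.

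The implication $(2) \Rightarrow (3)$ is immediate: if $\hom(m\delta, n\epc) = 0$ for some $m,n \in \mathbb{N}$, applying hom-fluency first in the first argument and then in the second yields $mn\, \hom(\delta,\epc) = \hom(m\delta, n\epc) = 0$, whence $\hom(\delta,\epc) = 0$.

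The bulk of the work lies in $(3) \Rightarrow (2)$, which is the direction attributed to the proof of \cite[Theorem 3.22]{Ft}, and the plan is to transcribe that argument. The core ingredients are: the inequality $f_{\epc}(\delta) \leq \hom(\delta,\epc)$ from \cite[Lemma 3.5]{Ft}; the existence of some $m_0 \in \mathbb{N}$ with $f_{\epc}(m_0\delta) = \hom(m_0\delta,\epc)$ (loc.\ cit., Theorem 3.22); and positive homogeneity $f_{\epc}(m_0\delta) = m_0 f_{\epc}(\delta)$, which together give $\hom(m_0\delta,\epc) = m_0 f_{\epc}(\delta)$. Upper semi-continuity of $\hom$ applied to the degeneration of a general representation of weight $m\delta$ to $m$ copies of a general representation of weight $\delta$ furnishes the inequality $\hom(m\delta,\epc) \leq m\hom(\delta,\epc)$ for every $m$. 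The main obstacle is to use saturation to rule out a strict gap $f_{\epc}(\delta) < \hom(\delta,\epc)$: one constructs an auxiliary weight pair that witnesses the deficit and whose $\hom$-group vanishes after scaling but is nonzero at scale one, contradicting (3). Carrying out this extraction intrinsically within the QP setting, without appealing to the semi-invariant machinery available only for acyclic quivers, is the technical crux, and the strategy is to combine the tropical $F$-polynomial scaling machinery of \cite{Ft} with the saturation hypothesis on the monoid of $\hom$-vanishing pairs.
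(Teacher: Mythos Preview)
Your proposal follows essentially the same route as the paper. The paper does not give a self-contained proof of this proposition: the equivalence $(1)\Leftrightarrow(2)$ is the content of the paragraph immediately preceding the statement (which you reproduce correctly), and for $(3)\Rightarrow(2)$ the paper simply writes ``the proof of \cite[Theorem 3.22]{Ft} shows that they are in fact equivalent.'' So at the level of what the paper actually records, your outline matches.

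Where you go beyond the paper is in attempting to sketch the content of that cited argument, and here your outline stops short of a proof. You assemble the right ingredients ($f_{\epc}(\delta)\le\hom(\delta,\epc)$, existence of some $m_0$ realizing equality after scaling, homogeneity, upper semi-continuity), but the step ``one constructs an auxiliary weight pair that witnesses the deficit and whose $\hom$-group vanishes after scaling but is nonzero at scale one'' is not carried out: you neither specify the pair nor explain why the saturation hypothesis---which in the proposition is stated only for the fixed pair $(\delta,\epc)$---would apply to it. Note in particular that if $\hom(\delta,\epc)>0$ and $\hom(m\delta,n\epc)>0$ for all $m,n$, the saturation hypothesis for $(\delta,\epc)$ is vacuous, so some genuine input from the proof in \cite{Ft} is needed to extract hom-fluency. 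This is not a divergence from the paper---the paper does not supply this either---but your final paragraph reads as a strategy rather than an argument, and you should be aware that the actual work lives entirely inside the cited reference.
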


\begin{conjecture}[Saturation for nondegenerate QPs]  Every nondegenerate Jacobi-finite QP is saturated.
\end{conjecture}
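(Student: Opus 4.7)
The plan is to deduce the saturation conjecture from Conjecture \ref{c:hev}. By Proposition \ref{P:saturation}, saturation of a pair $(\delta,\epc)$ is equivalent to the generic pairing $f_\epc(\delta)=\hom(\delta,\epc)=\fc_\delta(\epc)$. Theorem \ref{T:dualitypairing} then reduces this equality to the existence of a mutation sequence $\mub$ with $(\mub(\delta),\mub(\epc))$ $\hom$-vanishing, or a pair of sequences witnessing simultaneous $\e$- and $\ec$-vanishing. Thus if Conjecture \ref{c:hev} holds in its $\hom$-vanishing alternative, we are done; the $\e$-vanishing alternative can be handled symmetrically after appealing to Lemma \ref{L:H2E} to translate the vanishing into a $\hom$-vanishing at a $\tau$-shifted pair.

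For Conjecture \ref{c:hev}, the approach I would pursue uses the mutation framework of Section \ref{S:Prop} together with the raising and lowering operators $r_\epc,l_\epc$. The strategy is inductive on a suitable non-negative complexity measure on pairs of weight vectors: a natural candidate is $\hom(\delta,\epc)+\e(\delta,\epc)$ combined with a secondary lex-order (or a weighted $\ell^1$-norm of $(\delta,\epc)$). At each pair where both quantities are positive, I would look for a vertex $u$ such that $\mu_u$ strictly decreases the complexity. Theorem \ref{T:murank} gives the update rule for the general rank under a single mutation, while the invariants $h_l,\check{h}_l,e_r,\check{e}_r$ of Proposition \ref{P:muinv} constrain the behavior of $\hom$ and $\e$ along mutation sequences; together one would hope to force termination at a configuration where the rank is $0$, $\dv(\delta)$, or $\dv(\epc)$.

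The hard part is twofold. First, Theorem \ref{T:murank}, Lemma \ref{L:rlmu}, and Proposition \ref{P:muinv} all presume that $r_\epc$ or $l_\epc$ can be generically lifted at $\delta$, which in the present state of the theory is only available under the completely extremal rank hypothesis of Conjecture \ref{c:cer}; so Conjecture \ref{c:hev} appears to depend on Conjecture \ref{c:cer}, and an unconditional proof of saturation seems to require a self-contained argument that avoids circularity. Second, even granting the extremal rank machinery, one must upgrade $\rank(\delta,\epc)=\dv(\epc)$ to $\e(\delta,\epc)=0$ in the non-rigid case, which is exactly Question \ref{q:var}; the rigid case follows from Theorem \ref{T:subp}, and a promising route for the general case is to rephrase the rank condition as a surjection inside $\CQ$ via Palu's index formula (Lemma \ref{L:index}) and a lift in the style of Lemma \ref{L:rltri1}, then transport the resulting triangle back to $\mod J$ to read off the $\e$-vanishing. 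Until at least one of these sub-problems is resolved, the saturation conjecture will remain open for QPs outside the extended-reachable or acyclic cases.
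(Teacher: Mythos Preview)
The statement you were asked to prove is a \emph{conjecture}, and the paper does not prove it; it is left open. There is therefore no ``paper's own proof'' to compare against. What the paper does prove is the subsequent Corollary, namely that Conjecture~\ref{c:hev} implies the saturation conjecture (and the generic pairing conjecture). Your first paragraph correctly reconstructs exactly this reduction: via Proposition~\ref{P:saturation} one reduces saturation to the generic pairing $f_{\epc}(\delta)=\hom(\delta,\epc)=\fc_{\delta}(\epc)$, and then Theorem~\ref{T:dualitypairing} together with the $\tau$-shift trick through Lemma~\ref{L:H2E} closes the loop assuming Conjecture~\ref{c:hev}. That part of your write-up matches the paper's argument for the Corollary essentially line for line.

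Your second and third paragraphs go beyond anything the paper attempts: you sketch an inductive attack on Conjecture~\ref{c:hev} itself via a complexity measure and the mutation formulas of Section~\ref{S:Prop}. This is speculation, not proof, and you are candid about that. You also correctly identify the two genuine obstructions the paper leaves open: the generic-lifting hypothesis underlying Theorem~\ref{T:murank} and Proposition~\ref{P:muinv} (currently only available under Conjecture~\ref{c:cer}), and the missing implication $\rank(\delta,\ep)=\dv(\ep)\Rightarrow \e(\delta,\ep)=0$ in the non-rigid case (Question~\ref{q:var}). Your closing sentence is accurate: the saturation conjecture remains open in the paper, and your proposal does not close it either.
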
 
\noindent It is not hard to give a counterexample for degenerate QPs. 
\begin{corollary} Conjecture \ref{c:hev} implies both the saturation conjecture
	and the generic pairing conjecture for nondegenerate QPs.
\end{corollary}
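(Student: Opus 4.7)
The plan is to chain two applications of Conjecture~\ref{c:hev} and invoke Theorem~\ref{T:dualitypairing}. By Proposition~\ref{P:saturation}, for any fixed pair $(\delta,\epc)$ the generic pairing \eqref{eq:genpairing} is equivalent to the saturation property, so it suffices to establish the generic pairing at every pair. Theorem~\ref{T:dualitypairing} in turn reduces the generic pairing at $(\delta,\epc)$ to producing either one mutation sequence making the pair $\hom$-vanishing, or two sequences making it $\e$-vanishing and $\ec$-vanishing respectively.

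First, I apply Conjecture~\ref{c:hev} to $(\delta,\ep)$ to obtain a sequence $\mub$ under which $(\mub(\delta),\mub(\ep))$ is either $\hom$-vanishing or $\e$-vanishing. In the former case, condition~(1) of Theorem~\ref{T:dualitypairing} is met and we are done, so assume we are only in the $\e$-vanishing case. To manufacture a companion $\ec$-vanishing sequence, I apply Conjecture~\ref{c:hev} a second time, now to the $\tau$-twisted pair $(\tau^{-1}\ep,\delta)$, obtaining a sequence $\mu_{\check{\b{u}}}$ for which $(\mu_{\check{\b{u}}}(\tau^{-1}\ep),\mu_{\check{\b{u}}}(\delta))$ is $\hom$-vanishing or $\e$-vanishing.

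To transfer this back to $(\delta,\ep)$, I use Lemma~\ref{L:taucommu} ($\tau$ commutes with mutation, so $\mu_{\check{\b{u}}}(\tau^{-1}\ep)=\tau^{-1}\mu_{\check{\b{u}}}(\ep)$) together with the two isomorphisms from Lemma~\ref{L:H2E}, namely $\Ec_J(\mc{M},\mc{N})\cong \Hom_J(\tau^{-1}\mc{N},\mc{M})^*$ and $\E_J(\tau^{-1}\mc{N},\mc{M})\cong\Hom_J(\mc{M},\mc{N})^*$. The $\hom$-vanishing branch therefore yields $\ec(\mu_{\check{\b{u}}}(\delta),\mu_{\check{\b{u}}}(\ep))=0$, which combined with the $\e$-vanishing from the first step triggers case~(2) of Theorem~\ref{T:dualitypairing}. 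The $\e$-vanishing branch instead yields $\hom(\mu_{\check{\b{u}}}(\delta),\mu_{\check{\b{u}}}(\ep))=0$, bringing us back to case~(1) via the sequence $\mu_{\check{\b{u}}}$. Either outcome supplies the hypothesis of Theorem~\ref{T:dualitypairing}, completing the argument for the generic pairing, and hence for the saturation conjecture.

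I expect the only delicacy to be the $\tau$-bookkeeping on the decorated side: one must verify that Lemma~\ref{L:H2E} can be applied to the representatives produced by Conjecture~\ref{c:hev} so that $\tau\tau^{-1}$ recovers $\mc{E}$ on the nose. This is routine since $\tau$ is a self-bijection on $\mc{R}ep(J)$, and the rest of the proof is a formal case analysis with no further obstacles.
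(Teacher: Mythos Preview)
Your proof is correct and follows essentially the same approach as the paper: apply Conjecture~\ref{c:hev} twice, once to $(\delta,\ep)$ and once to the $\tau$-twisted pair $(\tau^{-1}\ep,\delta)$, then use Lemma~\ref{L:H2E} to convert the latter into an $\ec$-vanishing or $\hom$-vanishing statement for $(\delta,\ep)$ so that Theorem~\ref{T:dualitypairing} applies. The only cosmetic difference is the order in which the two applications are made---the paper treats the twisted pair first---but the case analysis and the key identifications via Lemma~\ref{L:H2E} and Lemma~\ref{L:taucommu} are identical.
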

\begin{proof} By the above discussion, it suffices to show that $f_{\epc}(\delta)=\hom(\delta,\epc)=\fc_{\delta}(\epc)$ for any pair $(\delta,\ep)$.
For any pair $(\delta,\ep)$, we also consider the pair $(\tau^{-1}\ep,\delta)$.
By Conjecture \ref{c:hev}, there is a sequence of mutations $\mub$ such that $(\mub(\tau^{-1}\ep), \mub(\delta))=(\tau^{-1}\mub(\ep), \mub(\delta))$ is either $\hom$-vanishing or $\e$-vanishing, which is equivalent to say $(\mub(\ep), \mub(\delta))$ is either $\ec$-vanishing or $\hom$-vanishing by Lemma \ref{L:H2E}.
If it is $\hom$-vanishing, then we are done by Theorem \ref{T:dualitypairing}. Otherwise, it is $\ec$-vanishing and if working with the original pair $(\delta,\ep)$ there is another sequence of mutations $\mub'$ such that $(\mub'(\ep), \mub'(\delta))$ is $\e$-vanishing, and we are done as well by Theorem \ref{T:dualitypairing}.
\end{proof}

\section*{Acknowledgement}
I would like to thank the anonymous referee for many good suggestions and corrections.

%

\bibliographystyle{amsplain}

\end{document}